\newtheorem{thm}{Theorem}[section]
\newtheorem{prop}[thm]{Proposition}
\newtheorem{cor}[thm]{Corollary}
\newtheorem{lem}[thm]{Lemma}
\newtheorem{remark}[thm]{Remark}
\def\N{{\mathbb{N}}}
\def\C{{\mathbb{C}}}
\def\Z{{\mathbb{Z}}}
\def\E{{\mathcal{E}}}
\def\R{{\mathbb{R}}}
\def\W{{\mathcal{W}}}
\def\H{{\mathcal{H}}}
\def\a{{\alpha}}
\begin{document}

\title{Gaps of saddle connection directions for some branched covers of tori}

\author[A. Sanchez]{Anthony Sanchez}
\address{Department of Mathematics, University of Washington, Box 354350, Seattle, WA, 98195-4530}
\email{asanch33@uw.edu}

\subjclass[2010]{Primary 37A17; Secondary 37D40, 32G15\\
\emph{Key words and phrases:} gap distribution, translation surface, affine lattice, horocycle flow, Poincar\'{e} section}

\begin{abstract}{We compute the gap distribution of directions of saddle connections for two classes of translation surfaces. One class will be the  translation surfaces arising from gluing two identical tori along a slit. These yield the first explicit computations of gap distributions for non-lattice translation surfaces.  We show that this distribution has support at 0 and quadratic tail decay. We also construct examples of translation surfaces in any genus $d>1$ that have the same gap distribution as the gap distribution of two identical tori glued along a slit. The second class we consider are twice-marked tori and saddle connections between distinct marked points. These results can be interpreted as the gap distribution of slopes of affine lattices.
We obtain our results by translating the question of gap distributions to a dynamical question of return times to a transversal under the horocycle flow on an appropriate moduli space.}
\end{abstract}
\maketitle

\section{Introduction}
We are interested in the distribution of directions of saddle connections on translation surfaces. A \emph{translation surface} is a pair $(X,\omega)$ where $X$ is a compact, connected Riemann surface of genus $g$ and $\omega$ a non-zero holomorphic 1-form on $X$. The singular flat metric induced by $\omega$ has isolated singularities at the zeros of $\omega$, with a zero of order $k$ corresponding to a point with cone angle $2\pi(k+1)$.

A \emph{saddle connection} is a geodesic segment in the flat metric connecting two (not necessarily distinct) singular points, with no singular points in its interior. See Figure \ref{fig: DoubledSlitTorus} for an example of a saddle connection on a translation surface. Integrating $\omega$ over $\gamma$ yields the \emph{holonomy vector} of $\gamma$, which keeps track of how far and in what direction $\gamma$ travels. We write
$$v_\gamma := \int_\gamma \omega \in \C.$$  The set of all holonomy vectors,
$$\Lambda_\omega = \{v_\gamma: \gamma \text{ is an oriented saddle connection on } (X,\omega)\}\subset \C$$
is a discrete subset of the plane, see e.g. \cite{MR2186246}.

 The counting and distribution of saddle connections and their holonomy vectors is a very well-studied problem and we review the relevant history in the following subsection. Our motivating question is understanding how random the set of saddle connection directions are for some classes of translation surfaces. We study this by considering the \emph{gap distribution} of slopes of holonomy vectors which we now describe.

\begin{figure}
	\centering
    \includegraphics[width=4in]{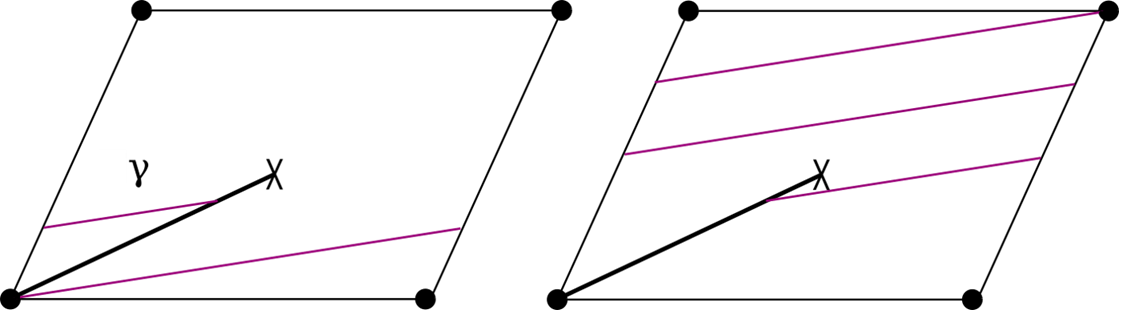}
    \caption{A saddle connection $\gamma$ on a translation surface. This translation surface is an example of a doubled slit torus.} 
    \label{fig: DoubledSlitTorus}
\end{figure}

The \emph{slopes} of saddle connections of a translation surface $(X,\omega)$ in the first quadrant are given  by
$$\mathcal S(\Lambda_{\omega}) := \left\{\text{slope}(w) = \frac{w_2}{w_1}\bigg| w =w_1+i w_2\in \Lambda_{\omega},w_1>0,w_2\ge0\right\}.$$

Let $\mathcal S^R(\Lambda_{\omega}) $ be the set of slopes of saddle connections in $\omega$ that are in the first quadrant and in the ball of radius $R$ about 0 with respect to the max norm on $\R^2$,
$$
\mathcal S^R(\Lambda_{\omega})  = \left\{\text{slope}(w) = \frac{w_2}{w_1}\bigg| w \in \Lambda_{\omega}, w_1\in (0,R], w_2\in[0,R] \right\}.
$$
Let $N(\omega, R):= |\mathcal S^R(\Lambda_{\omega})|$ and write the elements of $\mathcal S^R(\Lambda_\omega)$  in increasing order:
$$
\mathcal S^R(\Lambda_{\omega}) = \left\{s_0 = 0<s_1<\ldots <s_{N(\omega, R)} \right\}.
$$
By the seminal work of  Eskin-Masur~\cite{MR1827113}, for almost every translation surface, there are positive constants $ c_1 = c_1(\omega) \le c_2 = c_2(\omega)$ so that $$c_1 R^2 \le N(\omega, R) \le c_2 R^2.$$ 

We are interested in slope gaps and since  $N(\omega, R)$ grows quadratically, then it is natural to consider the set of \emph{normalized slope gaps},
\begin{equation}\label{NormalizedGapsSlit}
 \mathcal G^R(\Lambda_{\omega}) = \left\{R^2\cdot(s_i-s_{i-1})|i = 1,\ldots, N(\omega, R)\right\}
\end{equation}

\noindent The \emph{gap distribution} of saddle connection directions on a translation surface $(X,\omega)$ is given by the weak-* limit as $R \rightarrow \infty$ of the probability measures
\begin{equation}\label{GapDist}
\eta_R(\omega) = \frac{1}{N(\omega, R)} \sum_{x \in  \mathcal G^R(\Lambda_{\omega})} \delta_x
\end{equation}
 if it exists.

Evaluating the weak-* limit of these measures on the indicator function of an interval $I$ we obtain
\begin{equation}\label{GapDist}
\lim_{R\to\infty}\frac{| \mathcal G^R(\Lambda_{\omega}) \cap I|}{N(\omega, R)}
\end{equation}
 the proportion of gaps in an interval $I$.

\subsection{Motivations for gap distributions}
The counting and distribution properties of saddle connections and their directions have been studied extensively.  Masur~\cite{MR850537} showed the \emph{angles of saddle connections} are dense on the circle for any translation surface. Vorobets~\cite{MR1436653} and Eskin-Masur\cite{MR1827113} showed that the angles equidistribute on the circle for a generic translation surface. More recently, Dozier~\cite[Theorem 1.3]{MR3959357} showed that for \emph{every} translation surface the distribution of angles converge to a measure that is in the same measure class as Lebesgue measure on the circle. Precise descriptions of their results are in the references above, but they suggest that angles (and thus slopes) seem to behave ``randomly" at first glance. Thus, the gap distribution of translation surfaces can be thought of as a second test of randomness that yields insight to the fine-scale statistics of holonomy vectors.

Probability theory provides some context for what one would expect from ``truly random" gaps. In the case of a sequence of independent identically distributed uniform random variables on $[0, 1]$ the gaps converge to an exponential distribution. Following \cite{MR3497256}, we call gap distributions that deviate from an exponential distribution \emph{exotic}.

\subsection{Main results}
We compute the gap distribution for the class of translation surfaces that arise from gluing two identical tori along a slit.  See Figure \ref{fig: DoubledSlitTorus}. We call such a surface a \emph{doubled slit torus}. Every doubled slit torus has genus two and two cone-type singularities of angle $4\pi$. Denote the class of doubled slit tori by $\E.$ We prove that the gap distribution of doubled slit tori exists and is given by an explicit density function. This provides the first explicit computation of a gap distribution for a non-lattice surface. 

\begin{thm}\label{ThmGapSlit}
There is a limiting probability density function $ f : [0,\infty) \to [0,\infty)$, with
$$\lim_{R\to\infty}\frac{| \mathcal G^R (\Lambda_\omega) \cap I|}{N(\omega, R)}=\int_I f(x)\,dx$$
for almost every doubled slit torus $\omega$ with respect to an  $SL_2(\R)$-invariant probability measure on $\E$ that is in the same measure class as Lebesgue measure.

The density function is continuous, piecewise differentiable with 3 points of non-differentiability, and each piece is expressed in terms of elementary functions.
\end{thm}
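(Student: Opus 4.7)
The plan is to adapt the Poincar\'{e} section technique of Athreya--Chaika (and Athreya--Chaika--Leli\`{e}vre) to the locus $\E$, which is a non-lattice affine invariant submanifold. The key translation, hinted at in the abstract, rewrites gaps of normalized slopes as return times for the horocycle flow $h_s = \begin{pmatrix} 1 & 0 \\ s & 1 \end{pmatrix}$ on $\E$, after rescaling by the geodesic flow $g_t = \mathrm{diag}(e^{-t/2}, e^{t/2})$.

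First I would set up the dictionary between slope gaps and horocycle return times. The action of $g_t h_s$ sends a holonomy vector $(x,y)$ to $(e^{-t/2}x,\, e^{t/2}(y + sx))$, so applying $g_{2\log R}$ turns saddle connections with horizontal component in $(0, R]$ into ones with horizontal component in $(0,1]$. Defining the transversal
\[
\Omega = \{\omega' \in \E : \omega' \text{ has a horizontal saddle connection of length at most } 1\},
\]
the shears $s\ge 0$ at which $h_s \cdot (g_{2\log R}\,\omega)$ returns to $\Omega$ are exactly the elements of $R^2 \cdot \mathcal{S}^R(\Lambda_\omega)$. Hence, for every $\omega$ whose horocycle orbit equidistributes in $\E$, the measure $\eta_R(\omega)$ converges weakly to the distribution of the first return time $R_\Omega$ of $h_s$ to $\Omega$, taken with respect to the conditional on $\Omega$ of the $SL_2(\R)$-invariant probability measure.

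Second, I would parametrize $\Omega$ and compute $R_\Omega$ explicitly. A doubled slit torus in $\Omega$ is determined by a torus $T = \R^2/\Lambda$ together with a slit vector $v$ and a distinguished short horizontal saddle connection, and disintegrating the $SL_2(\R)$-invariant measure produces an explicit Lebesgue-type measure $\mu_\Omega$ on this parameter space. The return time $R_\Omega$ is the smallest $s > 0$ such that $h_s \cdot \omega$ acquires a horizontal saddle connection of length at most $1$; the competing candidates split into vectors in $\Lambda$ (saddle connections internal to a torus copy) and vectors in the translate $\Lambda + v$ (saddle connections crossing the slit). Determining which candidates realize the minimum as the parameters vary is the core combinatorial step, and I expect it to split the parameter space into four regions separated by three algebraic curves, which after push-forward produce the three breakpoints. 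Invoking equidistribution of almost every horocycle orbit in $\E$ -- available from Eskin--Mirzakhani--Mohammadi applied to the invariant submanifold $\E$ -- I would then identify $f$ with the Radon--Nikodym derivative of $(R_\Omega)_\ast \mu_\Omega$, normalized to a probability measure, against Lebesgue measure on $[0,\infty)$. Writing this push-forward via the coarea formula yields a piecewise description of $f$ in elementary functions; continuity at the breakpoints follows from the matching of the adjacent pieces, while the positivity at $0$ and the $x^{-2}$ tail decay come from the behavior of $\mu_\Omega$ in the cusp regions where candidate vectors become nearly horizontal.

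The main obstacle I anticipate is the return-time case analysis. Because $\E$ is not a Veech locus, one cannot restrict the candidate vectors to a fundamental domain for a Veech group and must instead control every competitor directly from the geometry of the slit; in particular, vectors of the form $\lambda + v$ for $\lambda \in \Lambda$ have no lattice-translation symmetry that reduces the enumeration. Verifying that only three non-differentiability points arise and that the resulting elementary expressions glue continuously across them is where the bulk of the computation lies; once the explicit formula for $R_\Omega$ is established, the remaining integrations against $\mu_\Omega$ and the identification of the tail asymptotics are routine.
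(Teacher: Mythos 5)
Your proposal mirrors the paper's strategy almost step for step: rescale by $\gamma_R=\operatorname{diag}(R^{-1},R)$ (your $g_{2\log R}$) to move into a vertical strip, set up the transversal of doubled slit tori with a horizontal saddle connection of length at most $1$, split the candidate short vectors into the lattice $g\Z^2$ and the affine coset $g\Z^2+v$, compute the first return time explicitly by a floor-function case analysis, and then push the induced measure forward through the return time to get the density. This is exactly what the paper does with $\W=\W_{sl}\cup\W_{sa}$, the affine-lattice transversal $\Omega$, and Theorems \ref{ReturnTimeMap} and \ref{ReturnTimeW}.

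The one substantive misstep is the equidistribution input. Eskin--Mirzakhani--Mohammadi classifies $SL_2(\R)$- and $P$-invariant measures and gives averaged equidistribution of $SL_2(\R)$-orbits; it does not directly yield equidistribution of individual horocycle orbits, which is the statement you would need. In fact you need less: an almost-everywhere Birkhoff-type statement for the return map on the transversal, which follows once the induced measure is shown to be ergodic. The paper gets this essentially for free because $\E$ is a finite cover of the homogeneous space $\mathcal{H}(0,0)=AX_2=SL_2(\R)\ltimes\R^2/SL_2(\Z)\ltimes\Z^2$, so Ratner's measure classification (and the resulting classification of $T$-invariant measures on the transversal, Theorem \ref{MeasureClassificationOmega}) supplies ergodicity. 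Replacing your EMM appeal with this homogeneous-dynamics argument closes the gap and puts you on the paper's route.
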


\begin{figure}[H]
\centering
\begin{subfigure}{.5\textwidth}
  \centering
  \includegraphics[width=.6\linewidth]{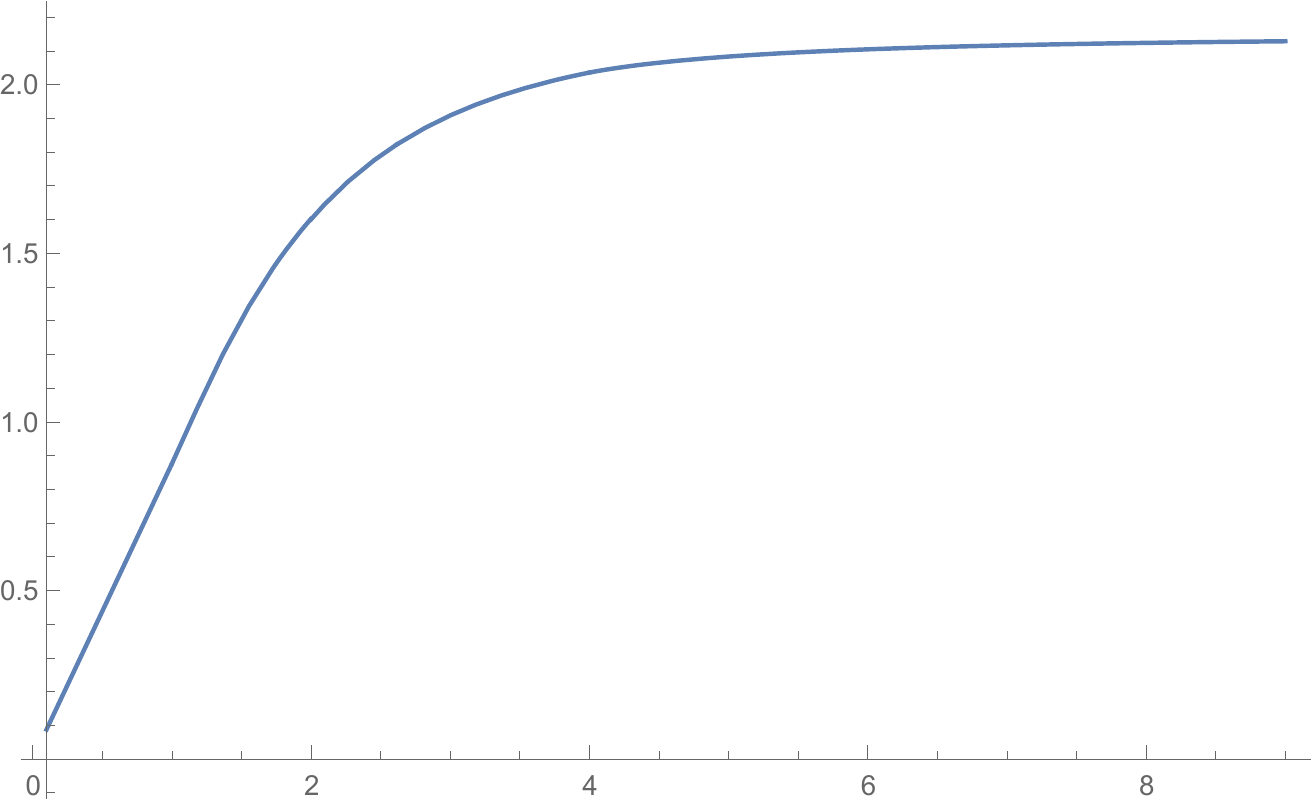}
  \caption{The cummulative distribution function.}
  \label{CummDist}
\end{subfigure}%
\begin{subfigure}{.5\textwidth}
  \centering
  \includegraphics[width=.6\linewidth]{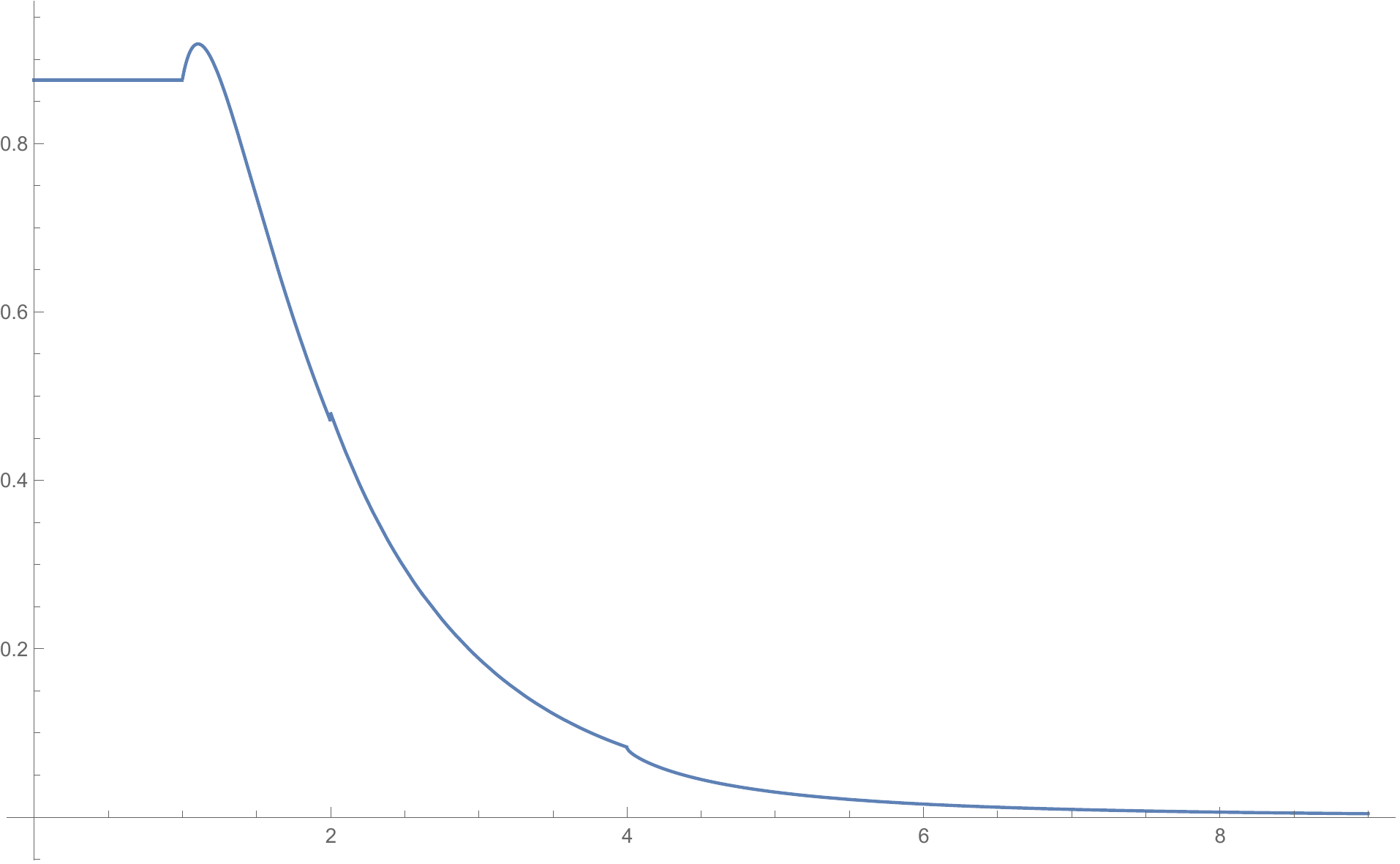}
  \caption{The density function}
  \label{Density}
\end{subfigure}
\caption{Proportion of gaps for doubled slit tori}
\label{GapFunctions}
\end{figure}

The proof of Theorem \ref{ThmGapSlit} can be found in section 4 and an explicit description of the cumulative distribution function can be found in the appendix A.1.3. As a corollary we also show that the gap distribution has quadratic tail so that the gap distribution of doubled slit tori is exotic (Theorem \ref{Estimates} part 4).  We prove other results related to the gap distribution, but we wait until section 4 Theorem \ref{Estimates} before more precisely stating them.  

Perhaps surprisingly, Theorem \ref{ThmGapSlit} will follow from the following dynamical result (relevant definitions will be given in Section 2). 

\begin{thm}\label{ThmWTransversal}
The set $\W$ consisting of doubled slit tori with a horizontal saddle connection of length less than or equal to 1 forms a transversal for the space of doubled slit tori $\E$ under Teichm\"{u}ller horocycle flow. There is a four-dimensional parameterization of $\W$ (see Proposition \ref{ParameterizationW}) and the return time map in these coordinates is given explicitly (see Theorem \ref{ReturnTimeW}).
\end{thm}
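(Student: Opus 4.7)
The plan is to follow the Athreya--Cheung blueprint for turning slope-gap statistics into first-return times under horocycle flow, adapted to the non-lattice setting of doubled slit tori. Let $h_s = \left(\begin{smallmatrix}1 & 0 \\ s & 1\end{smallmatrix}\right)$ denote the Teichm\"{u}ller horocycle flow. A holonomy vector $v = (x,y) \in \Lambda_\omega$ with $x > 0$ satisfies $h_s v = (x, y + sx)$, so $v$ becomes horizontal at the unique time $s = -y/x$. Since $\Lambda_\omega$ is a discrete subset of $\C$, the set of times $s$ at which $h_s \omega$ acquires a horizontal saddle connection of length at most $1$ is discrete in $\R$, which gives transversality: $\W$ meets every horocycle orbit in a discrete (and generically nonempty) set of times. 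The locus of doubled slit tori whose shortest horizontal saddle connection has length exactly $1$ has measure zero, so the first-return map is well-defined on a full-measure subset.

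For the parameterization, I would write a doubled slit torus as a pair consisting of a lattice $\Lambda \subset \R^2$ (the shape of the common torus, after area normalization) together with a slit vector $\sigma \in \R^2$. The condition defining $\W$ is that $\Lambda$ carry a horizontal vector of length at most $1$, which is codimension one in the space of lattices. Writing the shortest such horizontal vector as $(a,0)$ with $0 < a \le 1$ and the complementary generator as $(b, 1/a)$ with $b \in [0, a)$, one obtains two parameters $(a,b)$. Adjoining the slit coordinates $\sigma = (\sigma_1, \sigma_2)$ chosen in a fundamental domain for the torus (and compatible with a fixed orientation of the slit) yields the four parameters $(a, b, \sigma_1, \sigma_2)$ of Proposition \ref{ParameterizationW}. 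The main technical point here is selecting the fundamental domain for $\sigma$ so that the parameterization is a bijection onto a full-measure subset of $\W$, and verifying that the natural Lebesgue measure in these coordinates matches the restriction of the $SL_2(\R)$-invariant measure to $\W$.

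Computing the return time is the heart of the proof. Starting at $\omega \in \W$ with parameters $(a,b,\sigma_1,\sigma_2)$, the return time is the smallest $s > 0$ for which $h_s \omega \in \W$, equivalently the smallest value of $-y/x$ taken over $(x,y) \in \Lambda_\omega$ with $x \in (0, 1]$ and $y < 0$. The candidate holonomy vectors decompose into families according to the structure of $\Lambda_\omega$ on a doubled slit torus: vectors in the torus lattice $\Lambda$ (saddle connections supported on a single copy of the torus), vectors of the form $\sigma + \lambda$ with $\lambda \in \Lambda$ (saddle connections crossing the slit from one sheet to the other), and any further combinations forced by the doubling. In each family the shortest vector with $x \in (0,1]$ and $y < 0$ can be written down directly from $(a,b,\sigma_1,\sigma_2)$, and the return time is the minimum of the resulting values of $-y/x$. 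The main obstacle is the combinatorial bookkeeping: one must show that only finitely many short holonomy vectors can realize the first return, enumerate these candidates explicitly in the coordinates, and partition the parameter space into regions on which a single candidate is optimal. Once this case analysis is pinned down, the explicit piecewise return-time formula of Theorem \ref{ReturnTimeW} follows by direct algebra.
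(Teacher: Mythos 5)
Your transversality argument (discreteness of slopes forces discrete return times) is in the spirit of the paper and is fine. But there is a genuine gap in the parameterization, and it propagates into the return-time computation.

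You claim that the condition defining $\W$ is that the underlying lattice $\Lambda = g\Z^2$ carries a horizontal vector of length at most $1$, and you build your four coordinates $(a,b,\sigma_1,\sigma_2)$ on that assumption: you write the short horizontal lattice vector as $(a,0)$, the complementary generator as $(b,1/a)$, and then adjoin the slit $\sigma$. This only covers part of the transversal. A doubled slit torus $\omega_{(g,v)}$ has saddle-connection holonomies
$$\Lambda_\omega = g\Z^2_{\mathrm{prim}} \cup (g\Z^2 + v) \cup (-g\Z^2 - v),$$
so $\omega\in\W$ can happen for two structurally different reasons: either the lattice $g\Z^2$ has a short horizontal vector, or the affine translate $g\Z^2 + v$ (slit-crossing saddle connections) has one while $g\Z^2$ has no horizontal vector of length $\le 1$ at all. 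Your parameterization assumes the former, so it simply does not apply on the latter locus. This is not a measure-zero omission: in the paper's notation $\W = \W_{sl}\cup\W_{sa}$, and both pieces are four-dimensional with positive measure ($\W_{sl}$ is parameterized by $(a,b,v_1,v_2)$ with $(a,b)\in\Delta$ and $v$ in a fundamental domain, while $\W_{sa}$ is parameterized by $(a,b,s,\alpha)$ via the affine-lattice transversal $\Omega$). Dropping $\W_{sa}$ would change the resulting gap distribution.

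Because of this, your return-time computation is also incomplete. You do correctly observe that the candidate holonomies split into the torus-lattice family and the slit-crossing family, and that one must minimize over both. But for points in $\W_{sa}$ one cannot even write down the starting coordinates in your parameterization, let alone enumerate candidates. The paper handles this by computing, on each piece of the transversal, the time to next reach $\W_{sl}$ and the time to next reach $\W_{sa}$ separately (the former from the Athreya--Cheung return time for $X_2$, the latter from the return time on the affine-lattice transversal $\Omega$), and then taking the minimum; for the $\W_{sa}$ piece that requires the explicit return map on $\Omega$ of Theorem \ref{ReturnTimeMap}, which is genuinely new and is where most of the work lies. To fix your proposal you would need to (i) correct the characterization of $\W$, (ii) introduce a second coordinate chart covering $\W_{sa}$ (essentially the $(a,b,s,\alpha)$ coordinates coming from putting the lattice in the Athreya--Cheung normal form $h_s p_{a,b}$ and the affine piece in horizontal normal form $(\alpha,0)$), and (iii) carry out the return-time minimization on that chart as well.

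As a minor remark, your sign convention $h_s=\left(\begin{smallmatrix}1&0\\s&1\end{smallmatrix}\right)$ is the opposite of the paper's $h_u=\left(\begin{smallmatrix}1&0\\-u&1\end{smallmatrix}\right)$; it is internally consistent with your ``$y<0$'' convention, but you should pick one and match it throughout.
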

In fact, the connection between slope gap questions and return times on moduli space is relatively well understood, see Athreya \cite{MR3497256} or Section 2.3 where we recall this connection. The main contribution of this theorem is the explicit return time map (Theorem \ref{ReturnTimeW}) that requires new techniques arising from the high dimensionality of the parameter space. Another key ingredient in proving Theorem \ref{ThmWTransversal} is the study of unipotent flows on the space of \emph{affine lattices}. By an affine lattice we simply mean a set of the form $g\Z^2+v$ where $g\in SL_2(\R)$ and $v\in \R^2$.  We prove the following result for the space of affine lattices.

\begin{thm}\label{ThmOmegaTransversal}
The set $\Omega$ consisting of affine lattices with a horizontal vector of length less than or equal to 1 forms a transversal for the space of affine lattices  $AX_2$ under horocycle flow. There is a four-dimensional parameterization of $\Omega$ (see Proposition \ref{ParameterizationOmega}) and the return time map in these coordinates is given explicitly (see Theorem \ref{ReturnTimeMap}). 
\end{thm}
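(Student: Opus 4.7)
The plan is to establish the theorem in three steps: give an explicit four-coordinate description of $\Omega$, verify that $\Omega$ is a transversal for horocycle flow on $AX_2 = ASL_2(\R)/ASL_2(\Z)$, and compute the first return time map in these coordinates.

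For the parameterization, I would use the right action of $ASL_2(\Z) = SL_2(\Z) \ltimes \Z^2$ on $ASL_2(\R)$ to reduce each $\Lambda \in \Omega$ to a canonical form. Writing $\Lambda = L + v$ with $L = \Lambda - \Lambda$ a unimodular lattice, the existence of a horizontal vector $p \in \Lambda$ with $|p| \le 1$ lets the translation part of $ASL_2(\Z)$ shift the basepoint so that $v = p = (a, 0)$ with $a \in (0, 1]$. The $SL_2(\Z)$-part then normalizes a basis of $L$ to lie in a standard fundamental domain for the $SL_2(\Z)$-action on the $3$-dimensional space of unimodular lattices. Since generically $L$ has no horizontal vector, $(a, 0)$ is the unique horizontal vector of $\Lambda$, and together with the three parameters from the fundamental domain for $L$ we obtain the four coordinates of Proposition \ref{ParameterizationOmega}.

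For transversality, the horocycle flow $u_s$ acts on each vector of $\Lambda$ by $(x, y) \mapsto (x, y + sx)$. Applied to the horizontal vector $(a, 0) \in \Lambda$, this gives $(a, sa)$, which is not horizontal for any $s \ne 0$; hence the flow direction is transverse to $\Omega$. That almost every orbit meets $\Omega$ follows from standard equidistribution results for horocycle flow on $AX_2$ together with the positive transverse measure of $\Omega$; concretely, any affine lattice with a holonomy vector $w$ satisfying $0 < |w_1| \le 1$ and $w_1 w_2 < 0$ is sent into $\Omega$ at the explicit time $s = -w_2/w_1$, and such $w$ exist for almost every affine lattice. The first return time at $\Lambda \in \Omega$ is the smallest $s > 0$ for which $u_s \Lambda$ contains a horizontal vector of length at most $1$; since $u_s w = (w_1, w_2 + s w_1)$ is horizontal exactly when $s = -w_2/w_1$, with image length $|w_1|$, this gives
\begin{equation*}
R(\Lambda) = \min\left\{-\frac{w_2}{w_1} > 0 : w = (w_1, w_2) \in \Lambda,\ 0 < |w_1| \le 1\right\}.
\end{equation*}
Writing a general $w \in \Lambda$ as $(a, 0) + m e_1 + n e_2$ for $(m, n) \in \Z^2$ with $(e_1, e_2)$ the chosen basis of $L$, one substitutes and solves this minimization in the coordinates of Proposition \ref{ParameterizationOmega} to obtain Theorem \ref{ReturnTimeMap}.

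The main obstacle is the last step. In the unmarked setting treated by Athreya and Cheung, the horizontal vector lies in $L$ itself, the minimization reduces to a single-variable problem indexed by one integer, and the answer is a Farey-type expression. In the affine setting considered here the horizontal vector lies in the coset $L + (a, 0)$ but not in $L$, and $\Lambda$ is generically not symmetric under $w \mapsto -w$; consequently both halves $w_1 > 0$ and $w_1 < 0$ of the strip $|w_1| \le 1$ must be handled, and the minimizing index pair $(m, n)$ depends on the position of $(a, 0)$ relative to $L$. Carrying out this case analysis and assembling the outcomes into a single explicit piecewise formula is the main new technical input.
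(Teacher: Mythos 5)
The general skeleton of your plan — normalize the affine part $v$ to a horizontal vector $(\alpha,0)$ with $\alpha\in(0,1]$, choose coordinates for the lattice part, and then read off the return time as a minimum of slopes — is the same strategy the paper follows. However there are two substantive points where your proposal departs from, or falls short of, what is actually needed.

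First, you propose to minimize $-w_2/w_1$ over \emph{all} $w\in\Lambda$ with $0<|w_1|\le 1$, i.e.\ over both signs of the horizontal component, and you flag this two-sided minimization as a necessary new feature of the affine setting. This does not match the paper's $\Omega$. The transversal is defined by $\Omega=\{\Lambda\in AX_2:\Lambda\cap(0,1]\neq\emptyset\}$, which requires a horizontal vector with \emph{positive} first coordinate in $(0,1]$; correspondingly, the paper's return time (Theorem \ref{ReturnTimeMap}) minimizes $\frac{(a^{-1}-sb)n - sam}{ma+nb+\alpha}$ only over integer pairs $(m,n)$ with $0<ma+nb+\alpha\le 1$ and positive numerator, i.e.\ only over first-quadrant vectors with horizontal in $(0,1]$. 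Because an affine lattice is not symmetric under $w\mapsto -w$, allowing $w_1<0$ genuinely enlarges the candidate set and would in general yield a smaller value than $R(a,b,s,\alpha)$, so your formula would not reproduce Theorem \ref{ReturnTimeMap}. You are right that asymmetry is the central new phenomenon here, but it manifests as the dependence on $\alpha$ in the floor $j=\lfloor(1+a-\alpha)/b\rfloor$ and the resulting case split on $\alpha\lessgtr a$, not as a need to consider both signs of $w_1$.

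Second, you propose to normalize the lattice $L$ to a ``standard fundamental domain'' for $SL_2(\Z)$. The paper instead writes the lattice part in the Athreya--Cheung coordinates $g\Z^2=h_sp_{a,b}\Z^2$ with $(a,b)\in\Delta=\{0<a\le 1,\ 1-a<b\le 1\}$ and $s\in[0,(ab)^{-1})$, where $\Delta$ is itself the Athreya--Cheung Poincar\'e section for horocycle flow on $X_2$ and $s$ is elapsed horocycle time since last visiting $\Delta$. This choice is not cosmetic: it is exactly what makes the slope minimization tractable (and is also what later lets the author compare the time to re-enter the lattice part of the cross-section with the time to produce a short affine vector, which is needed for the doubled slit torus computation). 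If you instead use a Siegel-type fundamental domain, you will not obtain the clean piecewise formula of Theorem \ref{ReturnTimeMap}. Finally, you stop at the setup of the minimization and acknowledge that carrying it out is ``the main new technical input''; that carry-through (the case analysis on $\alpha\lessgtr a$ and $b+\alpha\lessgtr 1$, identifying the unique optimal $m=m_n$ for each $n$, and locating the minimizing $n$ via the floor $j$) is precisely the content of the theorem and is not addressed in your proposal.
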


In addition to using the above theorem to prove Theorem \ref{ThmWTransversal} we also use it to prove a slope gap question for affine lattices that we phrase in Section 2.5.

\subsection{Why doubled slit tori are important}
The space $\E$ of doubled slit tori has a long history of being studied to provide insight into general questions on the dynamics and geometry of translation surfaces. That the space and the surfaces within it have very concrete descriptions make it an excellent testing ground for conjectures. 
 
For example, the space $\E$ provided the first examples of higher genus surfaces with a minimal but non-uniquely ergodic directional flow by Masur (see~\cite{MR1928530}). Cheung~\cite{MR2018932} used these surfaces to show that an upper bound on Hausdorff dimension of the directions which are non-ergodic was sharp.  A dichotomy on the Hausdorff dimension of such directions for doubled slit tori was later given by Cheung-Hubert-Masur~\cite{MR2772084}.

The space $\E$ is also interesting from the point of view of the $SL_2(\R)$ action on translation surfaces. In the remarkable paper \cite{MR2299738}, McMullen completely classifies orbit closures (under $SL_2(\R)$) of genus 2 translation surfaces. One consequence of his work is that there are translation surfaces that have non closed $SL_2(\R)$ orbits but are not dense. These intermediate orbit closures consist of an infinite family called the eigenform loci. The space $\E$ is a part of this family and often called the \emph{Eigenform locus of discriminant four} in the literature. 

\subsection{Some history of gap distributions}
This paper makes use of  unipotent flows on the space of affine lattices to prove a gap distribution result for doubled slit tori. Elkies-McMullen~\cite{MR2060024} famously used unipotent flows on the space of affine lattices to compute the distribution of gaps of the sequence $(\sqrt{n})_{n\in \N}$  on the unit circle.  Similarly, Marklof-Str\"{o}mbergsson~\cite{MR2726104} used dynamics on the space of affine lattices to study the gap distribution for the angles of visible affine lattice points. 

Through personal communication with J. Athreya, we learned that the paper of  Marklof-Str\"{o}mbergsson provided much inspiration for the use of homogenous dynamics in the context of gap distributions of translation surfaces. Athreya-Chaika~\cite{MR3000496} showed that the gap distribution for generic surfaces exists and have support at zero. Gap distributions of lattice surfaces were considered shortly thereafter. Athreya-Chaika-Lelievre~\cite{MR3330337} computed the explicit gap distribution for a genus $2$ lattice surface (the \emph{Golden L}) and Uyanik-Work~\cite{MR3567252} for the genus $2$ surface arising from the regular octagon. In some sense, these papers were generalizations of Athreya-Cheung~\cite{MR3214280} which used homogenous dynamics to compute gap distribution of Farey fractions (which can be interpreted as slopes of saddle connections of the torus with a marked point). In all the cases above, the parameter space is three dimensional whereas the parameter space for doubled slit tori and affine lattice is five dimensional.

 The present paper and \cite{MR3330337,MR3567252,MR3214280} are examples of a general strategy developed in Athreya \cite{MR3497256}. This strategy involves turning the question of gap distributions of slopes of saddle connections to a dynamical question of return times to a transversal under horocycle flow on an appropriate moduli space.  We recall this strategy in detail in the next section. Examples of explicit cross sections for horocycle flow can also be found in \cite{dia} and \cite{gh2}.

\subsection{Organization of the paper} In section 2 we outline the general strategy from \cite{MR3497256} which reduces the gap distribution question to a dynamical question. We also introduce twice marked tori and construct transversals needed for our paper. In section 3 we give a parameterization of the transversals for twice marked tori (Proposition \ref{ParameterizationOmega}) and doubled slit tori (Proposition \ref{ParameterizationW}). We also compute the first return time under horocycle flow for each  (Theorem \ref{ReturnTimeMap} and Theorem \ref{ReturnTimeW}).  In section 4 we consider measures on our transversals, classify measures for the transversal on twice marked tori (Theorem \ref{MeasureClassificationOmega}), and give precise descriptions of our results (Theorem \ref{Estimates}). In Corollary \ref{dSlitTorus} we construct examples of translation surfaces in any genus $d>1$ that have the same gap distribution as doubled slit tori. In the appendix we prove our estimates for the various gap distributions we consider.

\subsection{Acknowledgments}
This research was supported by the National Science Foundation Graduate Research Fellowship under Grant No.  1122374.  This paper is part of my thesis and I would like to give many thanks to my advisor Jayadev Athreya for his continuous encouragement and endless patience. Thanks to Dami Lee and Sunrose Shrestha for useful comments on the paper and to Aaron Calderon and Jane Wang for helpful discussions.


\section{Setting the stage} In this section we will outline the general strategy from  \cite{MR3497256}, introduce the relevant spaces, and define our transversals.

 \subsection{On translation surfaces} We review the basics of translation surfaces needed for this article. For a detailed treatment of these topics, we refer the reader to\cite{MR1928530,MR2186247,MR2261104}.

Recall that a translation surface is a pair $(X,\omega)$ where $X$ is a compact, connected Riemann surface of genus $g$ and $\omega$ is a non-zero holomorphic 1-form on $X$. We can also, equivalently, view a translation surface as a union of finitely many polygons $P_1\cup P_2\cup\cdots\cup P_n$ in the Euclidean plane with gluings of parallel sides by translations such that for each edge there exists a parallel edge
of the same length and these pairs are glued together by a Euclidean translation. In this case, the total angle about each vertex of the polygons is necessarily an integer multiple of $2\pi$. The vertices where the total angle $\theta$ is greater than $2\pi$ are called \emph{cone points} and correspond to the zeros of the 1-form. In fact, a zero of order $k$ is a cone point of total angle $2\pi(k+1)$.

By the Riemann-Roch theorem the sum of order of the zeros is $2g-2$ where $g$ denotes the genus of $X$. Thus, the space of genus $g$ translation surfaces can be stratified by integer partitions of $2g-2$. If $\underline k = (k_1,\ldots,k_s)$ is an integer partition of $2g-2$, we denote by $\mathcal H(\underline k)$ the moduli space of translation surfaces $\omega$ such that the multiplicities of the zeroes are given by $k_1,\ldots,k_s.$ As an example, any doubled slit torus belongs in the stratum $\mathcal H(1,1)$. In the case that there are no zeros of the differential, but we want to mark points, it is common to use a vector $\underline k$ with all zeros. For example, the space of genus 1 translation surfaces have no cone points, but it is convenient to mark a single point and denote it as $\mathcal H(0)$. 

There is a natural action by $SL_2(\R)$ on the space of translation surfaces. This is most easily seen via the polygon definition: Given a translation surface $(X,\omega)$ that is a finite union of polygons $\{P_1,\ldots,P_n\}$ and $A\in SL_2(\R)$, we define $A\cdot (X,\omega)$ to be the translation surface obtained by the union of the polygons  $\{AP_1,\ldots,AP_n\}$ with the same side gluings as for $\omega$. This action makes sense since the linear action of $A$ preserves the notion of parallelism. Notice that the action of $SL_2(\R)$ preserves the multiplicities and number of zeros so that it induces an action on each strata  $\mathcal H(\underline k)$. Furthermore,  $SL_2(\R)$ acts equivariantly on the holonomy vectors of translation surfaces in that $\Lambda_{g\cdot \omega} = g\cdot \Lambda_{\omega}.$

We will consider the action of the one-parameter family of matrices
$$\bigg\{h_u:= \left(
  \begin{array}{ c c }
     1 & 0 \\
     -u & 1
  \end{array} \right)\in SL_2(\R):u\in \R\bigg\}.$$ The action of these matrices on $\mathcal H(\underline k)$ is called the \emph{(Teichm\"{u}ller) horocycle flow}.

\subsection{Reducing to gaps in a vertical strip}
We now phrase a parallel gap question. We will see that this question will turn out to be equivalent to our original one. Let $V$ denote the vertical strip $V = \{(x,y)\in \R^2: 0<x\le 1, y>0\}$. For a translation surface $(X,\omega)$, we can consider the set of slopes of saddle connections contained in the vertical strip $V$
$$\mathcal S_V (\Lambda_\omega) = \{0<s_1<\cdots<s_N<\cdots\}.$$
Let $\mathcal S_V ^N$ denote the first $N$ slopes. We can associate to these slopes the set of \emph{gaps}
\begin{equation}\label{UnnormalizedGapsSlit}
\mathcal G^N _V(\Lambda_\omega) = \left\{s_i-s_{i-1}|i = 1,\ldots, N \right\}
\end{equation} 

Notice that the slopes tend to infinity and so we do not need to normalize. Then the gap distribution for slopes in the vertical strip given by
\begin{equation}\label{SlopeGapSlit}
\lim_{N\to\infty}\frac{| \mathcal G^N _V(\Lambda_\omega) \cap I|}{N}
\end{equation}
for an interval $I$. We prove

\begin{thm}\label{ThmGapSlitV} 
There is a limiting probability density function $ f : [0,\infty) \to [0,\infty)$, with
$$\lim_{N\to\infty}\frac{| \mathcal G^N _V(\Lambda_\omega) \cap I|}{N}=\lim_{R\to\infty}\frac{| \mathcal G^R (\Lambda_\omega) \cap I|}{N(\omega, R)} = \int_I f(x)\,dx$$
for almost every doubled slit torus $\omega$ with respect to an  $SL_2(\R)$-invariant probability measure on $\E$ that is in the same measure class as Lebesgue measure. 
The density function is continuous, piecewise differentiable with 3 points of non-differentiability, and each piece is expressed in terms of elementary functions.
\end{thm}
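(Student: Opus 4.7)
The plan is to reduce the vertical-strip slope-gap question to a return-time question on the transversal $\W$ under Teichm\"uller horocycle flow, compute the return-time distribution explicitly using Theorem \ref{ThmWTransversal}, and match the quadrant limit to the strip limit via a standard change-of-coordinates argument.

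First I would exploit that horocycle flow acts on holonomy vectors by $h_u \cdot (x,y) = (x, y - u x)$, shifting slopes by $-u$ while preserving the $x$-coordinate. If $0 < s_1 < s_2 < \cdots$ are the slopes of saddle connections of $\omega$ in $V$, with respective $x$-coordinates $x_i \in (0,1]$, then $h_{s_i}\omega \in \W$ and the first positive return time of $h_{s_i}\omega$ to $\W$ under $\{h_u\}$ is exactly $s_{i+1} - s_i$. Thus the vertical-strip gap distribution coincides with the distribution of the return-time function $\tau : \W \to (0, \infty)$ under the induced measure $\m_\W$. Writing $\m$ for the $SL_2(\R)$-invariant probability measure on $\E$ in the Lebesgue class, the Ambrose-Kakutani correspondence makes $\m_\W$ absolutely continuous with respect to the four-dimensional Lebesgue measure coming from Proposition \ref{ParameterizationW}. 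Ergodicity of horocycle flow on $(\E, \m)$, which holds here by homogeneous-dynamics arguments specialized to the eigenform locus, then gives that for $\m$-a.e.\ $\omega$ the empirical distribution $\frac{1}{N}\sum_{i=1}^N \delta_{s_{i+1}-s_i}$ equidistributes to the law of $\tau$ under $\m_\W/\m_\W(\W)$. Taking $f$ to be the density of this law yields the first equality.

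For the equality of the strip limit with the quadrant limit I would invoke the standard equivalence, which follows by applying the geodesic flow $g_t = \mathrm{diag}(e^{-t}, e^{t})$ with $e^t = R$ to turn saddle connections of $\omega$ in $[0, R] \times [0, R]$ into saddle connections of $g_t \omega$ in $[0, 1] \times [0, R^2]$ with slopes multiplied by $R^2$; one then controls the boundary effects arising from truncation at slope $R^2$ via the quadratic Siegel-Veech upper bound of Eskin-Masur and uses that $g_t \omega$ is $\m$-generic for $\m$-a.e.\ $\omega$. The analogous passage has been carried out in Athreya-Chaika \cite{MR3000496} and Athreya-Chaika-Lelievre \cite{MR3330337} and adapts cleanly here.

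Finally, the regularity of $f$ follows from the explicit piecewise formula for $\tau$ in Theorem \ref{ReturnTimeW}. The four-dimensional parameter space of $\W$ partitions into regions on which $\tau$ has a different elementary expression, and computing
$$
f(x) = \frac{1}{\m_\W(\W)} \, \frac{d}{dx} \, \m_\W\bigl(\{w \in \W : \tau(w) \le x\}\bigr)
$$
by integrating Lebesgue measure over the level sets $\{\tau = x\}$ yields a piecewise-elementary density. The three transitions between regimes of $\tau$ in parameter space produce the three claimed points of non-differentiability of $f$, and continuity across them must be checked by hand. The main obstacle is precisely this last step: the dynamical reduction of the earlier paragraphs is an adaptation of Athreya's general strategy \cite{MR3497256} to a five-dimensional moduli space and a four-dimensional transversal, but the integration over level sets in four dimensions, the verification of the piecewise formulas, and the identification of exactly three non-differentiability points require careful geometric bookkeeping of $\W$ and are where the bulk of the work will lie.
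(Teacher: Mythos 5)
Your proposal follows essentially the same route as the paper: reduce the vertical-strip gap question to return times under $h_u$ on the transversal $\W$ via equation (\ref{slopetoerg}), invoke ergodicity (and the general flow/transversal correspondence) to turn the empirical return-time averages into integrals against $m_\W$, identify $f$ as the derivative of the cumulative distribution $t\mapsto m_\W(\mathcal R^{-1}(0,t))$, and point to the piecewise explicit formula from Theorem \ref{ReturnTimeW} for the regularity claims. The paper's handling of the strip/quadrant equivalence is the $\gamma_R$ renormalization carried out already in Section 2.2, which is the same geodesic-flow argument you describe, and the explicit integration and continuity/non-differentiability bookkeeping you flag as the "bulk of the work" is indeed carried out in the appendix.
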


The density function $f$ is the same for both gap distribution questions. For the remainder of the paper we focus on proving the above theorem since it implies Theorem \ref{ThmGapSlit}. Indeed, the two gap distribution questions are related by applying the diagonal matrix 
$$\gamma_R= \left(
  \begin{array}{ c c }
     R^{-1} & 0 \\
     0 & R
  \end{array} \right)$$
 which takes vectors in the first quadrant with (max norm)  norm less than or equal to $R$ and sends them to vectors in the vertical strip. 
Notice $\gamma_R$ changes slopes of vectors in the first quadrant by a factor of $1/R^2$ and so the renormalized gaps from Equation \ref{NormalizedGapsSlit} become the unnormalized gaps from Equation \ref{UnnormalizedGapsSlit}. Thus, 
$$\lim_{R\to\infty}\frac{| \mathcal G^R(\Lambda_\omega) \cap I|}{N(\omega, R)} = \lim_{R\to\infty}\frac{| \mathcal G^{N(\omega, R)} _V(\gamma_R\cdot\Lambda_\omega) \cap I|}{N(\omega, R)}$$
and we have reduced proving Theorem \ref{ThmGapSlit} to proving Theorem \ref{ThmGapSlitV}, a gap distribution for the gaps coming from slopes of vectors in the vertical strip $V.$

\subsection{From gaps to transversals}

We show how to implement the strategy from \cite{MR3497256} where they translated the question of gaps between slopes of vectors into return times under the horocycle flow on an appropriate moduli space to a specific transversal. This method was utilized in  \cite{MR3330337},\cite{MR3214280}, and \cite{MR3567252}. By \emph{transversal} to horocycle flow we mean a subset $T$ so that almost every orbit under horocycle flow intersects $T$ in a non-empty, countable, discrete set of times. 

We consider the transversal on $\E$ under horocycle flow given by the set of doubled slit tori that contain a non-zero horizontal saddle connection with horizontal component less than or equal to 1.  Denote this by $\W$. That is,
 $$\mathcal W = \{\omega\in \mathcal E: \Lambda_\omega \cap (0,1]\ne \emptyset \}$$
where $\Lambda_\omega$ denotes the set of holonomies of saddle connections.
We will call a horizontal saddle connection with a horizontal component less than or equal to one a \emph{short} saddle connection and an element in $\W$ a \emph{short} doubled slit torus.  Occasionally the phrase ``cross section" or ``Poincar\'{e} section" will be used instead of transversal, but these all mean the same thing in this paper.

 Let us denote the return time map to $\W$ under horocycle flow by $\mathcal R$ and the return map by $\mathcal T$. Then, $\mathcal R:\W\to (0,\infty)$ is given by 
$$\mathcal R(\omega)=\min\{u>0: h_u (\omega)\in \W\}$$
and  $T:\W\to\W$ is given by
$$\mathcal T(\omega) = h_{\mathcal R(\omega)}(\omega).$$

A simple, but key, observation is that the horocycle flow preserves slope differences of vectors $\vec{v}$ since 
$$\text{slope}(h_u\vec{v})=\text{slope}(\vec{v})-u.$$
Consequently this holds for holonomy vectors of saddle connections.  This observation links slopes with $\W$ since if we apply horocycle flow to an element in $\W$, we see that the next vector to become short is the vector with short horizontal component and smallest slope, and the return time to the Poincar\'{e} section is exactly its slope. Similarly, the second vector to become short will be the vector with short horizontal component and second smallest slope and the return time will be its slope minus the return time of the first which is a \emph{slope difference}. Continuing in this fashion we see that the $i^{th}$-return time is the is the difference of the $i^{th}+1$ slope and the $i^{th}$. This is formalized with the following 
$$\mathcal R(\mathcal  T^i(\omega)) = s_{i+1}-s_i.$$ 
 Hence, we can relate our gap distribution to this induced dynamical system via 
\begin{equation}\label{slopetoerg}
\frac{1}{N}|\mathcal G^N _V(\Lambda_\omega)\cap I|=\frac{1}{N}\sum_{i=0} ^{N-1}\chi_{\mathcal  R^{-1}(I)}(\mathcal  T^i(\Lambda_\omega))
\end{equation}
where $\mathcal  R^{-1}(I)$ is the set of slope differences in an interval $I$. Thus, the gap distribution reduces to understanding return times of $\W$.  The remainder of this paper will be concerned with computing the return times.

\subsection{Moduli space}
We introduce the moduli space of twice marked tori and explain its connection to the space of doubled slit tori $\E$.

By a \emph{twice marked torus}, we simply mean a (unit area) torus with a preferred vertical direction and two marked points. We denote the set of all twice marked tori by $\mathcal H{(0,0)}$. Concretely, a twice marked torus is given by the data of a torus $\C/g\Z^2$, the flat metric $\,dz$ inherited from the complex plane, and two district marked points $v_1+\Z^2,v_2+\Z^2.$ We will always assume that the first marked point is at the origin and denote the second marked point by $v$. This data is equivalent to an \emph{affine lattice} $$\Lambda = g\Z^2+v$$
and we denote the space of all affine lattices by $AX_2.$ By considering the affine action of $SL_2(\R)\ltimes \R^2$ on $AX_2$, we may identify $AX_2$ (and hence $\mathcal H(0,0)$) as the homogenous space $$\mathcal H(0,0)=AX_2=SL_2(\R)\ltimes \R^2/ SL_2(\Z)\ltimes \Z^2.$$
We denote an element in any of the above spaces (and hence all) by $(g,v)$ and leave the coset implicitly defined. Denote by $X_2$ the \emph{space of lattices} which we can identify with the homogeneous space
$$X_2 = SL_2(\R)/SL_2(\Z).$$
There is the projection map $\mathcal H(0,0)=AX_2\to X_2$ given by $(g,v)\mapsto g$. This will be a torus bundle since the fiber of a point $g\Z^2$ in $X_2$ is a vector $ v$ that is only defined up to $g\Z^2$ and hence we think of $v$ as living in the torus $\R^2/g\Z^2$. 

Recall, that $\E$ is the family of translation surfaces obtained by gluing two identical tori along a slit. Every such surface projects to a twice marked torus by restricting to one torus and forgetting the slit. By reversing this process we can obtain a doubled slit torus from a twice marked torus, although some care must be taken. Indeed, suppose we have an element in $\mathcal H(0,0)$ which depends on the data $g\in SL_2(\R)$ and $v\in \C/g\Z^2$. We can take two copies of it and then consider a slit between the marked points to construct an element $\omega = \omega_{(g,v)}$ in $\E.$ There are four oriented trajectories between the marked points that we can choose corresponding to the ``corners" of the fundamental domain of $\C/g\Z^2$ spanned by the columns of $g$. Hence, each pair $(g,v)\in \mathcal H(0,0)$  gives rise to 4 elements in $\E$ which is to say there is a degree 4  map
$$\Pi:\E\to\mathcal H(0,0)$$
 given by ``forgetting the slit". Any time $\Pi(\omega)=(g,v)$ we use the notation $\omega = \omega_{(g,v)}$. Since the degree of $\Pi$ is four, then $\E$ can be identified as four copies of $\mathcal H(0,0)$, one copy for each oriented trajectory between the marked points.

Ultimately we are interested in saddle connections of elements in $\E$ and one way to find saddle connections of a doubled slit torus is to look for trajectories between marked points of the twice marked torus. This explains our interest in $\mathcal H (0,0)$.

\subsection{Gap distribution for twice marked tori}

We also consider the gap distribution of twice marked tori $\mathcal H(0,0)$. However, instead of considering all trajectories between marked points we will only be interested in those between \emph{distinct} marked points. The reason for this is because these trajectories will define an affine lattice. 

Let $\Lambda_\omega ^d$ denote the set of  all trajectories between marked points between distinct marked points.. Let  $\mathcal S_V (\Lambda_\omega ^d) $
denote the set of slopes of holonomy vectors in the vertical strip $V$. Then as before, we can order them
 $$\mathcal S_V (\Lambda_\omega ^d) = \{0<s_1<\cdots<s_N<\cdots\}.$$
Let $\mathcal S_V ^N$ denote the first $N$ slopes. We can associate to these slopes the set of \emph{gaps}
$$
\mathcal G^N _V(\Lambda_\omega ^d) = \left\{s_i-s_{i-1}|i = 1,\ldots, N \right\}
$$and consider a gap distribution question for the slopes in the vertical strip $V$
$$
\lim_{N\to\infty}\frac{| \mathcal G^N _V(\Lambda_\omega ^d) \cap I|}{N}
$$
for an interval $I.$
We prove

\begin{thm}\label{ThmGapTori}
There is a limiting probability density function $ g : [0,\infty) \to [0,\infty)$, with
$$\lim_{N\to\infty}\frac{| \mathcal G^N _V(\Lambda_\omega ^d) \cap I|}{N}=\int_I g(x)\,dx$$
for almost every twice-marked torus $\omega \in \mathcal H(0,0)$ with respect to an  $SL_2(\R)$-invariant probability measure on $\H(0,0)$ that is in the same measure class as Lebesgue measure. 
\end{thm}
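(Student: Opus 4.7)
The plan is to mirror the transversal-based strategy of Section 2.3, now applied to horocycle flow on $AX_2 = \H(0,0)$ with the cross-section $\Omega$ from Theorem \ref{ThmOmegaTransversal}. First observe that oriented trajectories between the two distinct marked points of a twice-marked torus $(g,v)$ are in bijection with non-zero vectors of the affine lattice $\Lambda = g\Z^2 + v$, so the slopes in $\mathcal S_V(\Lambda_\omega^d)$ are precisely the slopes of vectors of $\Lambda$ in the vertical strip $V$.

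I would then repeat the argument of Section 2.3 verbatim with $\E \mapsto AX_2$ and $\W \mapsto \Omega$. Let $\mathcal R_\Omega : \Omega \to (0,\infty)$ and $\mathcal T_\Omega : \Omega \to \Omega$ denote the return time and return map under horocycle flow. The identity $\text{slope}(h_u \vec v) = \text{slope}(\vec v) - u$ yields
$$\mathcal R_\Omega(\mathcal T_\Omega^i(\Lambda)) = s_{i+1} - s_i,$$
and hence
$$\frac{1}{N}|\mathcal G^N_V(\Lambda_\omega^d) \cap I| = \frac{1}{N}\sum_{i=0}^{N-1} \chi_{\mathcal R_\Omega^{-1}(I)}(\mathcal T_\Omega^i(\Lambda)).$$

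To pass to the limit I would invoke Birkhoff's ergodic theorem. The $SL_2(\R)\ltimes\R^2$-invariant probability measure $\m$ on $AX_2$ induces a finite Poincar\'{e} measure $\hat\m$ on $\Omega$, and ergodicity of horocycle flow on $(AX_2,\m)$ --- a classical fact going back to Elkies--McMullen \cite{MR2060024} and a special case of Ratner's theorems for unipotent flows --- descends to ergodicity of $\mathcal T_\Omega$ on $(\Omega,\hat\m)$; the measure classification of Theorem \ref{MeasureClassificationOmega} provides an alternate route to this ergodicity. Birkhoff then yields, for $\hat\m$-a.e.\ $\Lambda \in \Omega$,
$$\lim_{N\to\infty}\frac{1}{N}\sum_{i=0}^{N-1}\chi_{\mathcal R_\Omega^{-1}(I)}(\mathcal T_\Omega^i(\Lambda)) = \hat\m(\mathcal R_\Omega^{-1}(I)).$$

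To identify this as $\int_I g(x)\,dx$ I would push $\hat\m$ forward by $\mathcal R_\Omega$, using the four-dimensional parameterization of $\Omega$ (Proposition \ref{ParameterizationOmega}) together with the explicit formula for $\mathcal R_\Omega$ (Theorem \ref{ReturnTimeMap}). Since the return time is a piecewise smooth, non-degenerate function of the parameters, integrating $\hat\m$ over its three-dimensional fibers $\mathcal R_\Omega^{-1}(x)$ produces an absolutely continuous pushforward whose Lebesgue density is the desired $g$. Finally, to promote the conclusion from $\hat\m$-a.e.\ $\Lambda \in \Omega$ to $\m$-a.e.\ twice-marked torus, I would use that $\m$-a.e.\ horocycle orbit visits $\Omega$ and that the left-hand side of the gap-distribution identity is a function only of the orbit of the lattice. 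The main obstacle is this last density step: extracting $g(x)$ explicitly requires careful integration over three-dimensional level sets inside the four-dimensional parameter space, with the additional subtlety that absolute continuity must be checked at the (finitely many) loci where the return-time formula switches branches.
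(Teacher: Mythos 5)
Your proposal follows essentially the same route as the paper: recast gap counts as Birkhoff averages of $\chi_{\mathcal R_\Omega^{-1}(I)}$ along the return map on $\Omega$, invoke the measure classification of Theorem \ref{MeasureClassificationOmega} to get convergence for $m_\Omega$-generic points, and then observe that the cumulative distribution $t \mapsto m_\Omega(\mathcal R_\Omega^{-1}(0,t))$ is piecewise real analytic (by the explicit formula for the return time) and hence admits a density $g$. The only genuine divergence is presentational: you flag the promotion from ``$\hat\mu$-a.e.\ point of the transversal'' to ``$m_{AX_2}$-a.e.\ twice-marked torus'' as an explicit step, which the paper relegates to a remark after Theorem \ref{Estimates}, and you raise absolute continuity at the branch-switching loci as a worry --- this is a harmless caveat since those loci are Lebesgue-null in $\Omega$ and the CDF is continuous (the Poincar\'{e} measure has no atoms), so piecewise differentiability is enough for the existence of $g$; your concern is really only relevant to the \emph{explicit} computation of $g$, which the theorem does not require.
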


In fact, we prove that the slope gap distribution for twice-marked tori \emph{always} exists, but is not always the same. This is done by proving a measure classification result for the transversal under the map induced by horocycle flow (see Section 4.1). The gap distribution density is the same for twice-marked tori that are in the support of the same measure.

Moreover if $\omega\in\mathcal H(0,0)$ corresponds to $(g,v)\in AX_2$, then $\Lambda_\omega ^d$ is the affine lattice $g\Z^2+v$. Hence, the above theorem can be interpreted as a theorem about gaps of slopes of an affine lattice. The distribution of gaps of angles of affine lattices was studied in~\cite{MR2726104}. They show that the gap distribution agrees with the distribution of gaps of the sequence $(\sqrt{n})_{n\in \N}$  on the unit circle found by Elkies-McMullen~\cite{MR2060024}.

A consequence of Theorem \ref{ThmGapTori} is that the gap distribution of affine lattices has a quadratic tail and hence is not exotic (Theorem \ref{Estimates} part 1). As with the gap distribution of doubled slit tori, we  prove much finer results, but we wait until section 4 Theorem \ref{Estimates} before more precisely stating them.
Our proof technique is the same as for doubled slit tori, namely, we translate the gap distribution question to one of return times to a transversal on the moduli space of twice marked tori $\mathcal H(0,0).$ The explicit computation of these return times addresses a question from \cite[Section 6.2]{MR3497256}.

The transversal we consider is given by the set of twice marked tori that have a horizontal saddle connection from \emph{distinct} marked points of length less than or equal to 1. Denote this set by  $\Omega$. Under the identification of $\mathcal H(0,0)$ and $AX_2$, this is the same as the set of affine lattices that contain a non-zero horizontal vector with horizontal component less than or equal to 1. That is,
 $$\Omega=\left\{\Lambda  \in AX_2: \Lambda\cap (0,1]\ne \emptyset\right\}.$$ 
We will call a horizontal vector with a horizontal component less than or equal to one a \emph{short} vector and an element in $\Omega$ a \emph{short} affine lattice. Henceforth we will only think about elements $(g,v)$ as affine lattices.


 \section{Parameterization of the transversal and return times}
We give coordinates to the transversals from the last section and compute the first return time under these coordinates. 
\subsection{On $\mathcal H(0,0)$}
In this section we parameterize our Poincar\'{e} section $\Omega$ and compute the first return time to $\Omega$. We also recall some results from \cite{MR3214280} that will aid us.
\subsubsection{A Poincar\'{e} section for $X_2$}
Before parameterizing our section, we recall some results from \cite{MR3214280} where they considered a Poincar\'{e} section for $X_2$ which they used to obtain statistical properties of Farey fractions.  We fix the notation
$$ p_{a,b} :=  \left(
  \begin{array}{ c c }
     a & b \\
     0 & a^{-1}
  \end{array} \right)$$
and 
$$ g_{t} :=  \left(
  \begin{array}{ c c }
     e^{t} & 0 \\
     0 & e^{-t}
  \end{array} \right).$$
Let $\Delta$ denote the set of lattices that contain a short vector (recall that \emph{short} for us means length less than or equal to 1). It is shown in\cite{MR3214280} that this set is
$$\Delta = \left\{\Lambda_{a,b} :=  p_{a,b}\Z^2 = \left(
  \begin{array}{ c c }
     a & b \\
     0 & a^{-1}
  \end{array} \right) \Z^2 : 0<a\le1, 1-a<b\le1\right\}.$$ There is a bijection between $\Delta$ with the actual triangle  
$$\{(a,b):0<a\le1, 1-a<b\le1\}\subset \R^2$$ 
via $(a,b)\mapsto \Lambda_{a,b}$
 and thus we give an element in $\Lambda_{a,b}\in\Delta$ the coordinates $(a,b)$. Because of the bijection we abuse notation and use $\Delta$ for the subset of $X_2$ and the actual triangle.

We now restate the main theorem  \cite[Theorem 1.1]{MR3214280} with slightly different notation.
\begin{thm} The subset  $\Delta\subset X_2$ is a Poincar\'{e} section for the horocycle action on $X_2$. More precisely, every horocycle orbit $\{h_s \Lambda\}_{s\in \R}$ (outside a codimension 1 set of lattices) intersects $\Delta$ in a nonempty, countable, discrete set of times.

Moreover, the first return time map $r:\Delta\to [0,\infty)$ is given by
$$r(\Lambda_{a,b})=r(a,b)=\min\{s>0: h_s\Lambda_{a,b}\in \Delta\}=(ab)^{-1}$$
and the return map $S:\Delta\to\Delta$ is 
$$S(\Lambda_{a,b}):=h_{r(a,b)}\Lambda_{a,b}:=\Lambda_{S(a,b)}$$
where $S(a,b)=(b,-a+\lfloor\frac{1+a}{b}\rfloor b)$. 
\end{thm}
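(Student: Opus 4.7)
\emph{Proof plan.} I would argue in three stages: first verify that $\Delta$ parameterizes exactly the lattices containing a short horizontal vector; second, show by direct calculation that the first return of the horocycle orbit of $\Lambda_{a,b}$ to $\Delta$ occurs at time $(ab)^{-1}$; and third, identify the image of the return map in $(a,b)$-coordinates.

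For the parameterization, the observation is that $\Lambda\in X_2$ has a horizontal vector of length at most $1$ iff it admits a basis $(a,0),(b,a^{-1})$ with $0<a\le 1$: here $a$ is the length of the shortest positive horizontal vector and the $a^{-1}$ is forced by the unit-area condition. The residual ambiguity $(b,a^{-1})\mapsto(b+ka,a^{-1})$ for $k\in\Z$ is killed by requiring $b\in(1-a,1]$, which is the fundamental domain tailored to the return computation.

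For the return time, the plan is to compute the horocycle flow directly:
\[
h_s\Lambda_{a,b}=\{(n_1 a+n_2 b,\;-s(n_1 a+n_2 b)+n_2/a):n_1,n_2\in\Z\}.
\]
Hence $h_s\Lambda_{a,b}\in\Delta$ iff $\Lambda_{a,b}$ contains a vector $(x,y)$ with $x\in(0,1]$ and $y/x=s$. The first positive return time is therefore the minimum positive slope among lattice vectors of $\Lambda_{a,b}$ whose $x$-component lies in $(0,1]$, namely
\[
\min\!\left\{\frac{n_2}{a(n_1 a+n_2 b)}:n_2\ge 1,\;0<n_1 a+n_2 b\le 1\right\}.
\]
The candidate $(n_1,n_2)=(0,1)$ gives $(ab)^{-1}$. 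The crucial inequality $b>1-a$ excludes every pair with $n_1\ge 1$ and $n_2\ge 1$, since then $n_1 a+n_2 b\ge a+b>1$, violating the constraint. For $n_1\le -1$ (and $n_2\ge 1$) we have $n_1 a+n_2 b<n_2 b$, so the slope strictly exceeds $1/(ab)$. Hence $(ab)^{-1}$ is the minimum.

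To pin down the return map, at $s=(ab)^{-1}$ the vector $(b,a^{-1})$ is sent to $(b,0)$, which becomes the shortest positive horizontal vector of the new lattice, so $a'=b$. Writing the new lattice in the form $\Lambda_{a',b'}$ forces $(b',1/b)=k(b,0)-(a,-1/b)$ for some integer $k$, giving $b'=kb-a$. Requiring $b'\in(1-b,1]$ pins down $k=\lfloor(1+a)/b\rfloor$ uniquely, yielding the stated formula for $S(a,b)$. The transversal axioms reduce to the fact that the slopes of $\{(x,y)\in\Lambda:x\in(0,1],\,y>0\}$ form a discrete subset of $(0,\infty)$: any accumulation would produce a convergent sequence of distinct lattice points, contradicting the discreteness of $\Lambda$. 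The codimension-one exceptional set consists of orbits meeting $\partial\Delta$, where ties in the minimization occur.

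I expect the minimization step to be the main obstacle: one must see that the Poincar\'{e}-section constraint $b>1-a$ is \emph{precisely} the inequality that makes $(n_1,n_2)=(0,1)$ the minimizer, and then recognize the same combinatorial fact from the other side when identifying $k=\lfloor(1+a)/b\rfloor$ as the unique integer that places $b'=kb-a$ in $(1-b,1]$. Both steps are really the same fact about half-open intervals of length $1$, and they are the heart of why $\Delta$ admits a clean cross-section description.
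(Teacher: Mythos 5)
The paper does not prove this theorem; it quotes it verbatim (with new notation) from Athreya--Cheung \cite{MR3214280}, so there is no internal proof to compare against. That said, your plan recovers exactly the argument of the cited reference: parametrize $\Delta$ by the shortest horizontal vector, compute $h_s\Lambda_{a,b}$ as a set, reduce the return time to minimizing slopes of lattice points in the vertical strip, and observe that the defining inequality $a+b>1$ forces $(n_1,n_2)=(0,1)$ to be the minimizer. The subsequent identification of the unique integer $k$ placing $b'=kb-a$ in $(1-b,1]$ is also correct, and your remark that the forward and backward steps rest on the same ``half-open interval of length one'' observation is apt. The discreteness argument (finitely many lattice points in any bounded region of the strip) is the right one.

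The one genuine error is your description of the codimension-one exceptional set. You write that it consists of ``orbits meeting $\partial\Delta$, where ties in the minimization occur,'' but that is not what the exceptional set is, and ties do not cause any failure. Ties in the minimization (for instance $(0,1)$ and $(0,2)$ both giving slope $(ab)^{-1}$ when $b\le 1/2$) still yield a well-defined return time and a well-defined new value of $a'=b$; nothing breaks. The actual exceptional set consists of lattices whose horocycle orbit \emph{never enters} $\Delta$: these are precisely the lattices containing a vertical vector of length less than $1$, equivalently lattices whose $x$-projection is $c\mathbb{Z}$ with $c>1$, so that no lattice vector ever has first coordinate in $(0,1]$. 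Geometrically these are the closed horocycles foliating the cusp, of the form $h_s g_{\log a}\mathbb{Z}^2$ with $0<a<1$, as the paper notes immediately after stating the theorem. You should replace the ``ties'' remark with this identification; the rest of the proposal is sound.
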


They also identify the codimension 1 set as those lattices that contain a short vertical vector which geometrically correspond to embedded closed horocycles that foliate the cusp.  Each such horocycle can be parameterized by an $0< a\le1$. Dynamically, each horocycle is a periodic orbit under horocycle flow $h_s$ with period $a^{2}$ and orbit
$$\text{Per}(a):=\left\{ h_sg_{\log(a)} \Z^2= \left(
   \begin{array}{ c c }
     0 & -a^{-1} \\
     a & sa^{-1}
  \end{array} \right)\Z^2:s\in[0,a^2)\right\}.$$

Notice above we implicitly used that
$$h_sg_{\log(a)} \Z^2=h_sg_{\log(a)} \left(
  \begin{array}{ c c }
     0 & -1 \\
     1 & 0
  \end{array} \right) \Z^2 = \left(
   \begin{array}{ c c }
     0 & -a^{-1} \\
     a & sa^{-1}
  \end{array} \right)\Z^2 $$
so as to further drive the point that such elements are outside of the generic set of lattices. Of course, any $a>0$ gives rise to a periodic orbit $\text{Per}(a)$, but only the $0<a\le 1$ give rise to points the never enter the transversal $\Delta.$

As a consequence of their theorem we have that every $g\in X_2$ is of two forms:
\begin{enumerate}
\item $g=h_sp_{a,b}$ where $0<a\le1$, $1-a<b\le1$, and $0\le s<(ab)^{-1}$. These are the generic elements in $X_2$.
\item $g = h_sg_{\log(a)}$ where $0\le a<1$ and  $0\le s<a^2$. These are the elements in the codimension 1 set of lattices with a short vertical vector.
\end{enumerate}

\subsubsection{A Poincar\'{e} section for $AX_2$}

Beginning in this section we will use the identification between twice marked tori  $\mathcal H(0,0)$ and affine lattices $AX_2$ and mention our results in the language of affine lattices.
Observe that since the dimension of the space of affine lattices $AX_2$ is five that this cross section will reduce the dimension to 4. Thus, we aim find a four dimensional parameterization.

We prove a lemma that says that any affine lattice $(g,v)\in \Omega$ can be expressed as an affine lattice where the affine piece is horizontal.
\begin{lem}
 $$\Omega=\left\{(g,v)\in AX_2: v= \left(
  \begin{array}{ c c }
     \a  \\
     0 
  \end{array} \right) \text{ and }\a\in(0,1]\right\}.$$ 
\end{lem}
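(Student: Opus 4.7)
The plan is to unwind the definitions: recall that an element of $AX_2 = (SL_2(\R)\ltimes\R^2)/(SL_2(\Z)\ltimes\Z^2)$ is the coset of a pair $(g,v)$, and the underlying set-theoretic affine lattice is $g\Z^2+v$. Right-multiplication in the semidirect product by $(\gamma,w)\in SL_2(\Z)\ltimes\Z^2$ sends $(g,v)$ to $(g\gamma,\,gw+v)$, and since $\gamma\Z^2=\Z^2$ and $gw\in g\Z^2$, the set $g\Z^2+v$ is preserved. So we are free to translate $v$ by any element of $g\Z^2$ without changing the affine lattice class.

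For the inclusion $\Omega\subseteq\{\cdot\}$, suppose $(g,v)\in\Omega$. By definition of $\Omega$, there is some $\alpha\in(0,1]$ with $(\alpha,0)^T\in g\Z^2+v$, so $(\alpha,0)^T=gw+v$ for some $w\in\Z^2$. Using the equivalence from the previous paragraph with $\gamma=I$, we get $(g,v)\sim(g,\,gw+v)=(g,(\alpha,0)^T)$, which is exactly an element of the claimed parameterizing set.

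For the reverse inclusion, if $v=(\alpha,0)^T$ with $\alpha\in(0,1]$, then taking $w=0$ shows $(\alpha,0)^T=g\cdot 0+v\in g\Z^2+v$, so the affine lattice itself contains a horizontal vector of length in $(0,1]$, witnessing membership in $\Omega$.

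There is no real obstacle here beyond keeping track of the semidirect product law; the lemma is essentially a normal-form statement saying that the $\Z^2$-translation freedom in the affine coordinate can always be used to place $v$ on the witnessing short horizontal vector. Note the lemma does \emph{not} claim uniqueness of the representative: the lattice $g\Z^2$ (equivalently, the choice of $g$ modulo $SL_2(\Z)$) and the particular short horizontal vector chosen are not uniquely determined, and this non-uniqueness will presumably be addressed when promoting the set equality into the four-dimensional parameterization of Proposition \ref{ParameterizationOmega}.
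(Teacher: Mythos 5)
Your proof is correct and follows essentially the same route as the paper: both arguments use the freedom to change the representative $v$ by a lattice translate in $g\Z^2$ (the $\Z^2$-part of the stabilizer) to move $v$ onto the witnessing short horizontal vector. You spell out the semidirect-product bookkeeping and the trivial reverse inclusion slightly more explicitly than the paper does, but the content is identical.
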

\begin{proof}
Suppose that $(g,v)\in\Omega$ i.e. that the affine lattice $(g,v)$ contains a short horizontal vector.
Recall that $v$ is only defined up to elements in $g\Z^2$. Hence, if
$ w\in g\Z^2$, then as an affine lattice we have 
$$g\Z^2 + v =g\Z^2 + ( v +w) .$$

In particular, if we assume our affine lattice is short, then there is some  $ w\in g\Z^2$ so that 
$w+ v$ is a short horizontal vector. 
In particular, it is of the form
$$w+v = \left(
  \begin{array}{ c c }
     \a  \\
     0 
  \end{array} \right) $$
for some $\a\in(0,1]$. Thus,
$$g\Z^2 + v =g\Z^2 + v +w  = g\Z^2 +\left(
  \begin{array}{ c c }
     \a  \\
     0 
  \end{array} \right).$$\end{proof}

We use this lemma along with the coordinates on the space of lattices developed in  \cite{MR3214280} to parameterize $\Omega$.

\begin{prop}\label{ParameterizationOmega}
The set of of short affine lattices is given by the union of 
$$\left\{\left(h_sp_{a,b}, \left(
  \begin{array}{ c c }
     \a  \\
     0 
  \end{array} \right)\right) :a\in(0,1], b\in(1-a,1], s\in\left[0,(ab)^{-1}\right),\alpha\in(0,1])\right\}$$
and 
$$\left\{\left(h_sg_{\log(a)}, \left(
  \begin{array}{ c c }
     \a  \\
     0 
  \end{array} \right)\right) :a\in(0,1],s\in(0,a^2],\alpha\in(0,1])\right\}.$$
\end{prop}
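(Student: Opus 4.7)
The plan is to combine the preceding lemma---which normalizes the affine piece $v$ to a short horizontal vector $(\alpha,0)^T$ with $\alpha\in(0,1]$---with the Athreya--Cheung classification of $X_2$ recalled just above, which parameterizes the underlying coset $g\in X_2$. Since both the affine direction and the lattice direction are already controlled by existing results, the main work is bookkeeping: lifting these two facts consistently to $AX_2$ and verifying the two containments $\supseteq$ and $\subseteq$.

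The containment $\supseteq$ is essentially free. Given $(g,v)$ of either listed form, the affine lattice $g\Z^2+v$ contains $v=(\alpha,0)^T$ as its $(0,0)$-term, and this is a short horizontal vector because $\alpha\in(0,1]$, so the pair lies in $\Omega$.

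For $\subseteq$, I would start with an arbitrary $(g,v)\in\Omega$, apply the preceding lemma to replace $v$ by an equivalent representative of the form $(\alpha,0)^T$ with $\alpha\in(0,1]$, and then invoke the Athreya--Cheung classification to choose a representative of $g\in X_2$ in one of its two canonical forms: either $g=h_s p_{a,b}$ with $(a,b,s)$ in the generic triangular fundamental domain, or $g=h_s g_{\log(a)}$ with $(a,s)$ parameterizing a point on the closed horocycle $\text{Per}(a)$. Substituting each case back into $(g,v)$ produces exactly the two sets displayed in the statement.

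The one place requiring a little care is the boundary convention on $s$ in the periodic case: the classification theorem uses $s\in[0,a^2)$, whereas the proposition uses $s\in(0,a^2]$. These are equivalent fundamental domains for $\text{Per}(a)$ since $s=0$ and $s=a^2$ determine the same coset, so either convention covers the closed orbit. I would then justify the half-open choice $(0,a^2]$ by observing that at $s=0$ the lattice $g_{\log(a)}\Z^2$ itself lies in $\Delta$ (it contains the short horizontal vector $(a,0)$), so the corresponding affine lattice is already captured by the first, generic, parameterization; excluding $s=0$ from the second set avoids unnecessary duplication. Apart from this small accounting, the argument is a direct synthesis of the preceding lemma and the classification theorem, and no fresh computation is needed.
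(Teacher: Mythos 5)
Your proof is correct and takes essentially the same route as the paper's: the forward inclusion is immediate because each listed affine lattice contains the short horizontal vector $(\alpha,0)^T$, and the reverse inclusion follows by normalizing the affine piece via the preceding lemma and then placing the lattice part $g\Z^2$ into one of the two Athreya--Cheung canonical forms on $X_2$.

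One small remark on your closing aside about the $s$-range in the periodic piece: the two conventions $[0,a^2)$ and $(0,a^2]$ are equivalent simply because $s=0$ and $s=a^2$ label the same coset on the closed orbit, and that observation alone suffices. The further justification you give is shaky. Under the convention the paper actually uses for the periodic lattices---the one consistent with the displayed matrix $\left(\begin{array}{cc}0 & -a^{-1}\\ a & sa^{-1}\end{array}\right)$---the lattice $g_{\log(a)}\Z^2$ has short \emph{vertical} vector $(0,a)$ while its shortest horizontal vector has length $a^{-1}\ge 1$, so it does \emph{not} lie in $\Delta$ when $a<1$, and hence the $s=0$ affine lattices are not captured by the generic parameterization. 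And since $s=a^2$ and $s=0$ produce the same lattice, switching to the half-open interval $(0,a^2]$ does not actually remove any overlap in any case.
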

\begin{proof}
First notice that any affine lattice from the union is a short affine lattice.

Now suppose $(g,v)\in \Omega$. By the last lemma we can assume $v$ is a short vector. Combining this with the coordinates developed in \cite{MR3214280}, which we recalled in the last section, we can finish the proof of our parameterization. By the results of \cite{MR3214280}, a generic lattice $g\Z^2\in X_2$ (off a codimension 1 set) has the form
$$g\Z^2=h_s p_{a,b}\Z^2=\left(
  \begin{array}{ c c }
     a & b \\
     -sa & a^{-1}-sb
  \end{array} \right)\Z^2$$ with $0<a\le1$, $1-a<b\le1$, and $0<s<(ab)^{-1}$.

Thus, we can give such an element in $(g,v)\in\Omega$  the coordinates $(a,b,s,\alpha)$ where 
$$g\Z^2=h_s p_{a,b}\Z^2 \text{ and }v=\left(
  \begin{array}{ c c }
     \a  \\
     0 
  \end{array} \right).$$
 
If $g$ is in the codimension 1 set, then it is of the form
$$g\Z^2=h_sg_{\log(a)}\Z^2=\left(
  \begin{array}{ c c }
    0  & -a^{-1} \\
     a & sa^{-1}
  \end{array} \right)\Z^2$$ with $0<a\le1$, and $0<s<a^2$ and we give it the coordinates $(a,s,\a).$
\end{proof}

We note that the affine lattice $(a,0,s,\a)$ is \emph{not} the same as the affine lattice $(a,s,\a)$. 



\subsubsection{First return time to $\Omega$}
As mentioned above, our goal is to find the return map to our transversal which will calculate the gap distribution for slopes in the vertical strip $V$. We outline the strategy of how to do this: If we had the point $(g,v)\in\Omega$ i.e.  an affine lattice with a short horizontal vector, then we have  $v=\left(
  \begin{array}{ c c }
     \a  \\
     0 
  \end{array} \right)$ for some $0<\alpha \le 1$. Choose any vector $\left(
  \begin{array}{ c c }
     x  \\
     y 
  \end{array} \right)\in g\Z^2$. Then the horocycle flow action on the vectors of the affine lattice $(g,v)$ is given by
$$h_u\left(
  \begin{array}{ c c }
     x \\
    y
  \end{array} \right) + h_u\left(
  \begin{array}{ c c }
     \a \\
    0
  \end{array} \right)=\left(
  \begin{array}{ c c }
     x+\alpha \\
    y-s(x+\alpha)
  \end{array} \right).$$ 
Thus, if we want a short vector, then we need  the horizontal $x+\a$ between 0 and 1, $y>0$ (so that $(x+\a,y)$ is in the vertical strip), and $$u=\frac{y}{x+\alpha}$$
so that the vertical is zero. As there are several lattice points that will meet this criteria and since we are seeking the first return time, we will minimize $u>0$ over all lattice points in $g\Z^2$ and the vertical strip $V$.  Which is to say we want to compute
$$u=\min_{(x,y)^T\in g\Z^2\cap V}\frac{y}{x+\alpha}.$$


We can unpack a little more by observing that an element in a generic lattice has the form $g=h_sp_{a,b}$ where $0<a\le1$, $1-a<b\le1$, $0\le s<{(ab)}^{-1},$ and $0<\alpha\le 1$.

Thus, we are trying to find the minimum of the slopes of these vectors  
$$R(a,b,s,\a):= \min_{m,n\in \Z}\frac{(a^{-1}-sb)n - sam}{ma+nb + \alpha} $$
subject to the constraints:
\begin{itemize}
\item[1.] $0<ma+nb + \alpha\le 1$  so that the horizontal is short.
\item[2.] $ (a^{-1}-sb)n - sam$ is positive so that we are sampling points in the vertical strip $V$.
\end{itemize}
because this minimum is the return time. Our main result in this part is an explicit return time computation for generic affine lattices.

\begin{thm} \label{ReturnTimeMap}Let $j=\left\lfloor \frac{1+a-\alpha}{b}\right\rfloor$. If $\a>a$, then
$$R(a,b,s,\a) = \begin{cases}
 \frac{sa}{\a-a}, &\text{ if } s<\frac{\a-a}{ab\a}\\
 \frac{j(a^{-1}-sb)+sa}{\a-a+jb}, &\text{ if } s>\frac{\a-a}{ab\a}.
\end{cases}$$

If $\a\le a$, then
$$R(a,b,s,\a) = \begin{cases}
 \frac{a^{-1}-sb}{b+\a}, &\text{ if } b+\a<1\\
 \frac{j(a^{-1}-sb)+sa}{\a-a+jb}, &\text{ if } b+\a>1.

\end{cases}$$
\end{thm}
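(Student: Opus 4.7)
The return time $R(a,b,s,\a)$ is, by the discussion preceding the theorem statement, the infimum of the objective
\[
F(m,n) := \frac{(a^{-1}-sb)n - sam}{ma + nb + \a}
\]
over all $(m,n) \in \Z^2$ satisfying $0 < ma + nb + \a \le 1$ and $(a^{-1}-sb)n - sam > 0$. My plan is to identify a short explicit list of candidate minimizers by two monotonicity arguments---first in $m$ for fixed $n$, then in $n$ along the slice $m = -1$---and then compare their values.

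\emph{Monotonicity in $m$.} Treating $m$ as a real variable with $n\ge 0$ fixed, a direct computation gives
\[
\partial_m F(m,n) = -\frac{a s\a + n}{(ma+nb+\a)^2},
\]
which is strictly negative for $s>0,\,n\ge 0$ and also for $s=0,\,n\ge 1$. Hence for each admissible $n$ the minimum of $F(\cdot,n)$ is attained at the largest integer $m$ such that the horizontal lies in $(0,1]$ and the vertical is positive. A case analysis using the ranges $0<a\le 1$, $1-a<b\le 1$, and $0<\a\le 1$ then shows that the only candidates for the global minimum are the pairs $(m,n)=(-1,n)$ with $0\le n\le j$ in the case $\a > a$ (on the $n=0$ slice, $m=0$ is excluded because the vertical vanishes), together with the pairs $(m,n)=(0,1)$ and $(-1,j)$ in the case $\a \le a$. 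The bound $n \le j$ amounts to $\a - a + nb \le 1$, which is the defining property of $j = \lfloor(1+a-\a)/b\rfloor$; pairs with $m \le -2$ are eliminated by comparing to $(-1, n')$ for a suitable $n'$ and applying the negativity of $\partial_m F$.

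\emph{Monotonicity in $n$ along $m = -1$.} Put $f(n) := F(-1, n) = \frac{sa + n(a^{-1} - sb)}{\a - a + nb}$. A brief calculation, in which the terms involving $n$ in the numerator cancel, yields
\[
f'(n) = \frac{\a/a - 1 - sb\a}{(\a - a + nb)^2},
\]
whose sign is independent of $n$ and changes precisely at $s = (\a - a)/(ab\a)$. Thus $f$ is monotone on $\{0, 1, \ldots, j\}$: increasing when $s < (\a - a)/(ab\a)$, with minimum $sa/(\a - a)$ at $n=0$, and decreasing when $s > (\a - a)/(ab\a)$, with minimum $[j(a^{-1}-sb) + sa]/[\a - a + jb]$ at $n=j$. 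This settles the $\a > a$ case. When $\a \le a$, the numerator of $f'$ is nonpositive for all $s \ge 0$, so the minimum along $m = -1$ is at $n = j$; a parallel calculation on the $m = 0$ slice shows $F(0,n)$ is strictly increasing in $n$, so its minimum over valid $n$ is at $n = 1$. Comparing $F(0, 1) = (a^{-1}-sb)/(b+\a)$ with $F(-1, j)$ yields the split at $b + \a = 1$: when $b + \a \le 1$, $(0, 1)$ is admissible and is the minimizer; when $b + \a > 1$, $(0, 1)$ is inadmissible and $(-1, j)$ (with $j\ge 1$, automatic since $b\le 1 + a - \a$ whenever $\a \le a$ and $b\le 1$) is the unique minimizer.

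\emph{Main obstacle.} The principal technical difficulty is the second half of the first step---ruling out the pairs $(m, n)$ with $m \le -2$ or with $n$ outside the stated ranges. The monotonicity identities are elementary, but near the transitions $\a = a$, $b + \a = 1$, and $s = (\a - a)/(ab\a)$ the optimal candidate pair changes discontinuously, and the floor-function boundary in $j$ requires careful bookkeeping against the parameter constraints $0 < a \le 1$, $1 - a < b \le 1$, $0 < \a \le 1$, $0 \le s < (ab)^{-1}$.
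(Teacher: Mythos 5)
There is a genuine gap, and it is one your proposal shares with the paper's own proof: the claim that the global minimizer always occurs among the pairs $(m,n)=(-1,n)$ with $0\le n\le j$ (in the case $\a>a$), with the $m\le -2$ pairs ``eliminated by comparing to $(-1,n')$,'' does not hold. The trouble is that for $n>j$ the point $(-1,n)$ lies outside the strip (its horizontal exceeds $1$), so the inequality $F(m,n)>F(-1,n)$ coming from $\partial_m F<0$ compares against an \emph{inadmissible} point and cannot be chained to $F(-1,j)$; and when $s>\frac{\a-a}{ab\a}$ the sequence $F(-1,n)$ is \emph{decreasing}, so the inadmissible value $F(-1,n)$ for $n>j$ is actually smaller than $F(-1,j)$, destroying the chain entirely. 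A concrete counterexample: take $a=\tfrac12$, $b=1$, $\a=\tfrac{9}{10}$, $s=\tfrac{19}{10}$. These parameters lie in $\Omega$ (indeed in $\Omega_2$), and here $j=\lfloor(1+a-\a)/b\rfloor=0$, so the proposed candidate set reduces to the single pair $(-1,0)$, giving $F(-1,0)=sa/(\a-a)=\tfrac{19}{8}$. But the pair $(m,n)=(-2,1)$ is admissible, with horizontal $ma+nb+\a=\tfrac{9}{10}\in(0,1]$ and vertical $(a^{-1}-sb)n-sam=\tfrac{1}{10}+\tfrac{19}{10}=2>0$, and its slope is $F(-2,1)=\tfrac{2}{9/10}=\tfrac{20}{9}<\tfrac{19}{8}$. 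So the true minimum is not captured by your candidate list.

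This is not merely a gap in your write-up: the paper's proof of Theorem~\ref{ReturnTimeMap} has exactly the same hole. There the key sentence is that after adding the basis vector $(b,a^{-1}-sb)^T$ past the boundary of the strip, one ``subtracts the other basis vector, but since we are decreasing the horizontal, we are increasing the slope so that we do not need to look at these candidates.'' This correctly compares the post-subtraction point to the (inadmissible) pre-subtraction point, but says nothing about its slope relative to $\gamma_j$---and in the regime $s>\frac{\a-a}{ab\a}$ the pre-subtraction slopes are still decreasing past $n=j$, so the argument fails. If you want to salvage the overall strategy, you would have to enlarge the candidate set beyond $m\in\{0,-1\}$: for each $n$ the correct candidate is at $m_n=\lfloor(1-(nb+\a))/a\rfloor$, and once $n>j$ these satisfy $m_n\le -2$, producing a genuinely two-parameter staircase of candidates whose comparison requires a different (and more careful) monotonicity analysis than the one-dimensional $f(n)$ argument you and the paper both use.
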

\begin{proof}
The main idea is to notice that for any fixed $n$, there is a unique $m=m_n$ for which the affine lattice point
$$\left(
  \begin{array}{ c c }
     ma+nb + \alpha \\
    (a^{-1}-sb)n - sam
  \end{array} \right)$$ 
may be a candidate to minimize slope. From there we can single out a candidate vector that provides an upper bound to the minimum slope and then use that candidate along with the structure of the affine lattice to find the actual minimum. 

Fix $n\in\Z$, if we have a candidate affine lattice point, then the horizontal must satisfy
$$0<ma+nb + \alpha\le 1\iff \frac{-(nb+\a)}{a}<m \le \frac{1-(nb+\a)}{a}.$$
Since we require $m\in \Z$ we have that $m$ must be one of the following:
$$\bigg\lceil \frac{-(nb+\a)}{a}\bigg\rceil, \bigg\lceil \frac{-(nb+\a)}{a}\bigg\rceil +1, \bigg\lceil \frac{-(nb+\a)}{a}\bigg\rceil +2, \ldots, \bigg\lfloor \frac{1-(nb+\a)}{a} \bigg\rfloor. $$ 
Now notice that to minimize the slope
$$\frac{(a^{-1}-sb)n - sam}{ma+nb + \alpha} $$
we should maximize $m$. Hence, we choose  $m_n = \bigg\lfloor \frac{1-(nb+\a)}{a} \bigg\rfloor$. In particular, for every $n\in\Z$, there is a single $m=m_n$ for which the associated affine lattice point may be a minimizer.

Using this fact, we will single out a candidate and using the structure of the lattice to find the true minimum.

We proceed by cases:
\begin{itemize}

\item Case 1: $\a<a$. Consider the candidate minimizers when $n=1$. Since their horizontal must be short we deduce that
$$0<ma + b + \a \le 1 \iff \frac{-(b+\a)}{a}<m\le \frac{1-(b+\a)}{a} $$
Since we are trying to minimize the slope $\frac{a^{-1}-sb - sam}{ma+b+\a}$, then we want to maximize $m$ and so 
$$m=\left\lfloor \frac{1-(b+\a)}{a}\right\rfloor.$$ We claim that the only possible values for this floor are 0 and 1.  Indeed, notice by our choice of coordinates we have
$$1-a<b\le1 \iff-\a \le 1-(b+\a)<a-\a.$$
An obvious upper bound is $a$ and due to our assumption that $\a<a$ we have that a lower bound is $-a$. Hence,
 $$-a \le 1-(b+\a)<a\iff -1 \le \frac{1-(b+\a)}{a}<1.$$
This proves the claim that $m\in\{0,1\}$. Moreover, we see that 
$$m=\left\lfloor \frac{1-(b+\a)}{a}\right\rfloor =  \begin{cases}
0 &\text{ if } b+\a<1\\
-1 &\text{ if } b+\a>1.\\
\end{cases}$$
Hence, in the case that $\a<a$, the candidate when $n=1$ is
$$\gamma_1:= \begin{cases}
\left(
  \begin{array}{ c c }
    b+\a \\
    a^{-1}-sb
  \end{array} \right) &\text{ if } b+\a<1\\
\left(
  \begin{array}{ c c }
     b + \a -a \\
    a^{-1} -sb+sa
  \end{array} \right) &\text{ if } b+\a>1.\\
\end{cases}$$
Now, the minimum slope is bounded above by slope$(\gamma_1)$ and that to get any other candidate vector we must add/subtract multiples of the basis vectors of $\left(
  \begin{array}{ c c }
     a &b \\
    -sa & a^{-1}-sb
  \end{array} \right)\Z^2.$ Recall we found $\gamma_1$ by fixing $n=1$, this means that we cannot obtain any other candidates by adding/ subtracting the first basis vector. First notice that 
$$\gamma_1 -\left(
  \begin{array}{ c c }
     b \\
    a^{-1} -sb
  \end{array} \right)$$
has vertical zero so that it is not a candidate to minimize slope. Hence, to find other candidates to minimize slope, our only option to is to add the second basis vector $\left(
  \begin{array}{ c c }
     b \\
    a^{-1} -sb
  \end{array} \right)$ to $\gamma_1$ to get the other candidates at different levels (as long as the horizontal less than 1). For example, the level 2 candidate $$\gamma_2=\gamma_1 + \left(
  \begin{array}{ c c }
     b \\
    a^{-1} -sb
  \end{array} \right).$$
 More generally, all of our candidates are$$\gamma_n=\gamma_1 + (n-1)\left(
  \begin{array}{ c c }
     b \\
    a^{-1} -sb
  \end{array} \right)$$
where $n\ge1$ and so long as the horizontal is less than 1. Of course, once the horizontal becomes bigger than 1, we subtract the other basis vector, but since we are decreasing the horizontal, we are increasing the slope so that we do not need to look at these candidates.

 Now notice that 
$$ \begin{cases}
\text{ if } b+\alpha<1, \text{ adding } \left(
  \begin{array}{ c c }
     b \\
    a^{-1} -sb
  \end{array} \right) \text{ to $\gamma_1 $ increases slope.}  \\
 \text{ if } b+\alpha>1,\text{  adding } \left(  
  \begin{array}{ c c }
     b \\
    a^{-1} -sb
  \end{array} \right) \text{ to $\gamma_1$ decreases slope}.\\
\end{cases}$$
  We conclude that the return time is

$$\begin{cases}
 \text{ slope }(\gamma_1), \text{ if } b+\alpha<1\\
 \text{ slope }(\gamma_j), \text{ if } b+\alpha>1 \\
\end{cases},$$
where $j$ is the maximum number of times we can add $\left(
  \begin{array}{ c c }
     b \\
    a^{-1} -sb
  \end{array} \right)$ to $\gamma_1$ so that the horizontal of $\gamma_j$ is smaller than 1. We compute this value of $j$:
This is the largest number so that
$$\text{ horizontal }(\gamma_j) = jb+\alpha -a  \le 1<(j+1)b+\alpha -a \iff j=\left \lfloor \frac{a+1-\alpha}{b}\right\rfloor.$$

\item Case 2: $a<\a$. This case is slightly easier since we have a candidate at $n=0$ given by
$$\gamma_0 = \left(
  \begin{array}{ c c }
     \a \\
    0
  \end{array} \right) - \left(
  \begin{array}{ c c }
     a \\
   -sa
  \end{array} \right)  = \left(
  \begin{array}{ c c }
     \a - a \\
    sa
  \end{array} \right).$$
 This candidate is in the vertical strip $V$ by our assumption $a< \a$ and does not exists in case one. Notice that subtracting the second basis vector will yield a negative horizontal. 

Now, as before, to obtain any other candidate all we can do is add the second basis vector yielding the candidates
$$\gamma_n = \gamma_0 + n\left(
  \begin{array}{ c c }
     b \\
    a^{-1} -sb
  \end{array} \right)$$
where $n\ge0.$ Since we are considering a minimum, then slope$(\gamma_0)$ is an upper bound. When we compare the slope of $\gamma_0$ and 
$$\gamma_1 = \gamma_0 + \left(
  \begin{array}{ c c }
     b \\
    a^{-1}-sb
  \end{array} \right)$$
we see that the slope of $\gamma_1$ is 
$$ \begin{cases}
\text{ greater  if }s<\frac{\a-a}{ab\a} \\
 \text{ smaller if } s>\frac{\a-a}{ab\a}
\end{cases}.$$
In the second situation we continue to add the second basis vector as many times as possible so that the horizontal of the new vector is less than or equal to 1.  
We deduce,
$$R(a,b,s,\a) = \begin{cases}
 \text{ slope }(\gamma_0)=\frac{sa}{\a-a}, &\text{ if } s<\frac{\a-a}{ab\a}\\
 \text{ slope }(\gamma_j)=\frac{j(a^{-1}-sb)+sa}{\a-a+jb}, &\text{ if } s>\frac{\a-a}{ab\a} \\
\end{cases},$$
where $j$ is the maximum number of times we can add $\left(
  \begin{array}{ c c }
     b \\
    a^{-1}-sb
  \end{array} \right)$ to $\gamma_0$ so that the horizontal of $\gamma_j$ is smaller than 1. We compute this value of $j$:
This is the largest number so that
$$\text{ horizontal }(\gamma_j) = jb+\alpha -a  \le 1<(j+1)b+\alpha -a \iff j=\bigg\lfloor \frac{1+a-\alpha}{b}\bigg\rfloor.$$
\end{itemize}

\end{proof}
Notice the above theorem was for a generic affine lattice i.e. one such that the lattice part has the form$g=h_sp_{a,b}$. On the codimension 1 set of lattices the proof is even easier.  
\begin{prop}Suppose $a\in(0,1]$. The return map for the the codimension 1 set of lattices with a short vertical vector is given by
$$R\left(\left(h_sg_{\log(a)}, \left(
  \begin{array}{ c c }
     \a \\
    0
  \end{array} \right) \right)\right)=R(a,s,\a) = \frac{a}{\a}.$$
\end{prop}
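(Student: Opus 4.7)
The plan is a direct enumeration analogous to the generic case of Theorem \ref{ReturnTimeMap}, but far simpler because the codimension-$1$ structure of the lattice rigidly restricts the admissible horizontal components. In the coordinates $(a,s,\alpha)$ the affine lattice is
\[
\begin{pmatrix} 0 & -a^{-1} \\ a & sa^{-1}\end{pmatrix}\Z^{2} + \begin{pmatrix}\alpha \\ 0\end{pmatrix},
\]
so every point has the form $\bigl(-na^{-1} + \alpha,\; ma + nsa^{-1}\bigr)^{T}$ with $m,n\in\Z$. Applying $h_u$ turns such a point into a horizontal vector precisely when
\[
u \;=\; \frac{ma + nsa^{-1}}{-na^{-1} + \alpha},
\]
and for the result to lie in $\Omega$ the new horizontal component $-na^{-1}+\alpha$ must satisfy $-na^{-1}+\alpha \in (0,1]$. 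The return time $R(a,s,\alpha)$ is then the minimum of this expression over admissible $(m,n)$.

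Next I would argue that the horizontal window forces $n=0$. Since $a\in(0,1]$ implies $a^{-1}\ge 1$ and $\alpha\in(0,1]$, one checks that any $n\ge 1$ yields $-na^{-1}+\alpha \le -1+1 = 0$, while any $n\le -1$ yields $-na^{-1}+\alpha \ge a^{-1} + \alpha > 1$. Thus only $n=0$ is compatible with landing in $\Omega$, and the remaining candidate vectors are $(\alpha,\,ma)^{T}$ for $m\in\Z$, each giving $u = ma/\alpha$. Positivity of the return time together with $\alpha>0$ forces $m\ge 1$, and the minimum is attained at $m=1$, giving $R(a,s,\alpha) = a/\alpha$ as claimed.

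There is essentially no obstacle here, only a careful dichotomy on $n$. The noteworthy qualitative feature is that the return time is independent of the flow parameter $s$; this reflects the fact that $s$ merely parameterizes position along the closed horocycle $\text{Per}(a)$, and horocycle flow slides the marked point along this periodic orbit without altering the underlying lattice geometry that determines when a horizontal vector of length $\le 1$ reappears. In particular, all the multi-level bookkeeping used in the generic case of Theorem \ref{ReturnTimeMap} collapses, since only a single admissible horizontal strip $n=0$ contributes.
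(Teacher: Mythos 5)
Your proof is correct and essentially identical to the paper's: both parameterize the lattice points as $(\alpha - a^{-1}n,\, sa^{-1}n + am)^T$, argue that the short-horizontal constraint forces $n=0$ (the paper via the equivalent inequality $a(\alpha-1)\le n < a\alpha$, you via a direct dichotomy on the sign of $n$), and then minimize over $m\ge 1$. The closing remark about $s$-independence is a nice observation but not part of the paper's argument.
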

\begin{proof}
All of the points inside of a short vertical lattice $(a,s,\a)$ are of the form
$$ \left(
  \begin{array}{ c c }
     \a-a^{-1}n \\
    sa^{-1}n +am
  \end{array} \right)$$
for $m,n\in\Z.$ Since we want a slope of such a point in the vertical strip, then we have the constraints $0<\a-a^{-1}n \le 1$ and
$ sa^{-1}n +am>0$. We have
$$0<\a-a^{-1}n \le 1\iff a(\a-1)\le n <a\a.$$
Since $a,\a<1$, then we must have $n=0$. Hence, the slope of the points we are intersted in reduces to minimizing 
$$\frac{ sa^{-1}n +am}{  \a-a^{-1}n }=\frac{am}{\a}$$
subject to $am+sa^{-1}n=am$ positive. Since $a$ is positive, then this is minimized when $m=1.$
\end{proof}

\begin{remark}
We establish some notation suggested by the partition from the theorem. This notation will be used frequently in later sections.
$$\Omega = \Omega_1\sqcup \Omega_2\sqcup \Omega_3\sqcup \Omega_4\sqcup \Omega_{VL}$$
where
\begin{align*}
\Omega_1 &:= \left\{(a,b,s,\alpha)\in \Omega: a<\alpha \text{ and } s<\frac{\a-a}{ab\a}\right\},\\
\Omega_2 &:= \left\{(a,b,s,\alpha)\in \Omega: a<\alpha \text{ and } \frac{\a-a}{ab\a}<s\right\},\\
\Omega_3 &:= \left\{(a,b,s,\alpha)\in \Omega: \a<a \text{ and } b<1-\a\right\}, \\
\Omega_4 &:= \left\{(a,b,s,\alpha)\in \Omega: \a<a \text{ and } 1-\a<b\right\},\\
\Omega_{VL} &:=\left\{(a,s,\a) \in\Omega:a\in(0,1],s\in(0,a^2],\text{ and } \alpha\in(0,1])\right\}.
\end{align*}

Furthermore, we have the return time map is given by
$$R(a,b,s,\a) = \begin{cases}
 \frac{sa}{\a-a}, &\text{ if } (a,b,s,\a)\in\Omega_1\\
 \frac{j(a^{-1}-sb)+sa}{\a-a+jb}, &\text{ if } (a,b,s,\a)\in\Omega_2\\
 \frac{a^{-1}-sb}{b+\a}, &\text{ if } (a,b,s,\a)\in\Omega_3\\
 \frac{j(a^{-1}-sb)}{\a-a+jb}, &\text{ if } (a,b,s,\a)\in\Omega_4\\
 \frac{a}{\a}, &\text{ if } (a,s,\a)\in\Omega_{VL}.
\end{cases}$$
where $j=\left\lfloor \frac{1+a-\alpha}{b}\right\rfloor$. 

We remark that in the next sections we will mostly be uninterested in the piece $\Omega_{VL}$ since this is a lower dimensional subset of $\Omega.$
\end{remark}

\subsection{On $\E$}
\subsubsection{A Poincare section for $\E$}

We describe a parameterization of $\W$. Before doing this, we describe more explicitly saddle connections of a doubled slit torus.

 For an element $\omega=\omega_{(g,v)}\in\E$, the cone points correspond to either the origin of the torus $\C/g\Z^2$ or at the affine piece $v\in\C/g\Z^2$. Hence,  any saddle connection begins at  the origin or to the affine piece $v$ and end at the origin or the affine piece $v$.  Thus, the saddle connections of $\omega=\omega_{(g,v)}$ are
$$\Lambda_{(g,v)}:=\Lambda_{\omega_{(g,v)}} = g\Z^2 _\text{prim}\cup (g\Z^2 + v)\cup (-g\Z^2 - v).$$
Here, $g\Z^2 _\text{prim}$ is the set of relatively prime vectors in $\Z^2$. This set represents the saddle connections beginning and ending at themselves while $g\Z^2 + v$ represents those saddle connections beginning at one of the singularities and ending at the other one. The piece $-g\Z^2 - v$ corresponds to saddle connections starting at the affine piece and ending at the origin.  Our description above is only true under our standing assumption that $v$ has irrational slope. 

The explicit description of saddle connections of doubled slit tori allow us to describe $\mathcal W$ more explicitly as well. Essentially, what we will deduce is that for a doubled slit tori $\omega=\omega_{(g,v)}$ to have a short horizontal saddle connection (i.e. to be in $\W$), then the short horizontal saddle connection must come from the primitive vectors  $g\Z^2 _\text{prim}$ or from the affine lattice $(g\Z^2 + v)$.

 Recall, $\Delta = \{p_{a,b}\Z^2:(a,b)\in \Delta\}$ and 
$$\Omega = \{h_sp_{a,b}\Z^2 + \left(
  \begin{array}{ c c }
     \a \\
    0
  \end{array} \right): (a,b)\in\Delta, s\in [0,(ab)^{-1}), \alpha\in (0,1]\}.$$
We write $\Omega$ more succinctly as 
$$\Omega =\left\{ (g,v) = g\Z^2 +v: g\in SL_2(\R),v=\left(
  \begin{array}{ c c }
     \a \\
    0
  \end{array} \right)\right\}.$$
Denote the projection map from $AX_2 \to X_2$ by $\pi$.  Then, define
$$\mathcal W _{sl} = \{\omega\in \mathcal E: \Pi(\omega)\in \pi^{-1}(\Delta)\}$$
and 
$$\mathcal W _{sa} = \{\omega\in \mathcal E : \Pi(\omega)  \in \Omega\}.$$
Of course both of these sets are preimages of $\Pi$ but we use this notation to avoid notation overload. It is not hard to see that $\mathcal W _{sl}$ consists of those doubled slit tori so that the image under $\Pi$ is an affine lattice where the lattice part is short. Thus, it is a \emph{short lattice} and that is where the subscript ``$sl$" comes from. Similarly, $\mathcal W _{sa}$ consists of the doubled slit tori so that the image under $\Pi$ is an affine lattice where the affine piece is short. Thus, it has a \emph{short affine} vector and that is where the subscript ``$sa$" comes from. Unwrapping a bit more we see,
$$\mathcal W _{sl}=\{\omega\in\mathcal E: \Pi(\omega)= (p_{a,b},v)\} \text{ and }\mathcal W _{sa} =\left\{\omega \in \mathcal E:\Pi(\omega)= \left(g,  \left(
  \begin{array}{ c c }
     \a \\
    0
  \end{array} \right)\right)\right\}$$
for where $(a,b)\in \Delta$ and $\alpha\in(0,1]$.

This discussion proves that we have the following description of $\mathcal W$: if $\omega$ is short ($\omega\in \mathcal W$), then the short vector is either in the lattice part $g\Z^2_{prim}$ or in the affine lattice part $g\Z^2 +v$. We state this as a proposition.
\begin{prop}\label{ParameterizationW} The transversal $\mathcal W$ is
$$\mathcal W = \mathcal W _{sl}\cup \mathcal W _{sa}.$$
\end{prop}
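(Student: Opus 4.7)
The plan is to verify both inclusions of $\W = \W_{sl} \cup \W_{sa}$ by matching the definitions against the explicit decomposition
$$\Lambda_{(g,v)} = g\Z^2_{\text{prim}} \cup (g\Z^2 + v) \cup (-g\Z^2 - v)$$
recorded above, tracking in which piece a short horizontal saddle connection is forced to live.

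For the inclusion $\W \subseteq \W_{sl} \cup \W_{sa}$, suppose $\omega = \omega_{(g,v)} \in \W$ has a saddle connection $\gamma$ with holonomy $v_\gamma = (c,0)^T$ for some $c \in (0,1]$. Under the standing assumption that $v$ has irrational slope relative to $g\Z^2$, the three components of $\Lambda_{(g,v)}$ are disjoint, so $v_\gamma$ lies in exactly one of them. If $v_\gamma \in g\Z^2_{\text{prim}}$ then $g\Z^2$ contains a short horizontal vector, so $g\Z^2 \in \Delta$ and $\Pi(\omega) \in \pi^{-1}(\Delta)$, giving $\omega \in \W_{sl}$. If instead $v_\gamma \in g\Z^2 + v$ (the case $v_\gamma \in -g\Z^2 - v$ reduces to this after reversing the orientation of $\gamma$), then the affine lattice $g\Z^2 + v$ contains the short horizontal vector $(c,0)^T$. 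Invoking the lemma at the start of Section 3.1.2 that lets us translate the representative of the affine piece by an element of $g\Z^2$, we may replace $v$ by $(\a,0)^T$ with $\a \in (0,1]$, so $\Pi(\omega) \in \Omega$ and $\omega \in \W_{sa}$.

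For the reverse inclusion, both containments are immediate from the definitions of $\W_{sl}$ and $\W_{sa}$. If $\omega \in \W_{sl}$, then the Athreya--Cheung parameterization shows that $g\Z^2$ contains the horizontal vector $(a,0)^T$ with $a \in (0,1]$; since the shortest horizontal vector in a lattice is automatically primitive (any other horizontal lattice vector is an integer multiple of it), this vector lies in $g\Z^2_{\text{prim}}$ and corresponds to a horizontal saddle connection from the origin cone point back to itself of length at most one. If $\omega \in \W_{sa}$, then by construction the affine piece is $v = (\a,0)^T$ with $\a \in (0,1]$, which is itself a short horizontal saddle connection between the two distinct cone points. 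In either case $\omega \in \W$.

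I do not anticipate any serious obstacle: the genuine content of the proposition is already packaged into the a priori decomposition of $\Lambda_{(g,v)}$ together with the mild genericity hypothesis on $v$ needed to keep the three pieces of that decomposition disjoint. The only small subtlety is the primitivity remark used in the $\W_{sl}$ case, but this is the standard observation that a shortest horizontal vector generates all other horizontal vectors in the lattice.
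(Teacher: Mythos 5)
Your proof follows the same route as the paper: decompose $\Lambda_{(g,v)} = g\Z^2_{\text{prim}} \cup (g\Z^2 + v) \cup (-g\Z^2 - v)$, locate the short horizontal holonomy in one of these pieces, and match it against the definitions of $\W_{sl}$ and $\W_{sa}$. The lattice case (including the primitivity remark) and the $g\Z^2+v$ case are handled correctly.

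However, your treatment of the $-g\Z^2 - v$ case is not right as written, and this is the only place where the proposition actually requires an argument. You say that this case "reduces to" the $g\Z^2 + v$ case "after reversing the orientation of $\gamma$," and then conclude that $g\Z^2+v$ contains the short horizontal vector $(c,0)^T$. But if $v_\gamma = (c,0)^T \in -g\Z^2 - v$ with $c\in(0,1]$, then reversing orientation produces $-v_\gamma = (-c,0)^T \in g\Z^2 + v$, which is \emph{not} a short horizontal vector in the sense required: its horizontal component lies in $[-1,0)$, not $(0,1]$. The lemma you invoke lets you translate $v$ by elements of $g\Z^2$, but that only shifts the horizontal component by multiples of the horizontal period of $g\Z^2$ (if one exists), and for a generic lattice $g\Z^2$ has no horizontal vectors at all, so $(-c,0)^T$ is the \emph{only} horizontal vector in $g\Z^2 + v$ and cannot be moved into $(0,1]$. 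Thus on a positive-measure set of $\omega$ the argument, as stated, does not place $\omega$ in $\W_{sa}$.

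What is true is that $-g\Z^2 - v = g\Z^2 + (-v)$, so the short vector $(c,0)^T$ certifies $(g,-v) \in \Omega$, not $(g,v)\in\Omega$. Getting $\omega\in\W_{sa}$ from this requires the freedom to take $\Pi(\omega)$ to be $(g,-v)$ rather than $(g,v)$ --- i.e., the observation that a doubled slit torus does not come with a canonical ordering of its two cone points, so the map $\Pi$ is only well-defined up to the involution $v\mapsto -v$. The paper itself does not make this explicit (its discussion preceding the proposition lists only $g\Z^2_{\text{prim}}$ and $g\Z^2 + v$ as the possible locations of the short vector and is silent about $-g\Z^2-v$), so your write-up mirrors a gap already present in the source; but your "reverse orientation" justification, which purports to close the gap, does not in fact do so, and should be replaced by the $v\mapsto -v$ symmetry argument above.
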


For doubled slit tori we will remove lower dimensional subsets such as the affine lattices that have a periodic lattice.
\subsubsection{First return time to $\W$}
We compute the time it takes for an element in $\mathcal W$ to return to $\mathcal W$ under the horocycle flow. This return time will require the return map on $AX_2$ computed in the last section.

Our strategy to do this is to consider the return time for an element $\omega$ on the partition $\mathcal W_{sl}\cup \mathcal W_{sa}$ of $\mathcal W$. Then $\Pi(\omega) = (g,v)$ can be decomposed into a lattice piece $g$ and an affine vector piece $v$. We can check exactly how much time needs to pass under horocycle flow before returning $\mathcal W$  using the results of \cite{MR3214280} and the results on affine lattices. Then the minimum of the two times will be the return time to $\Omega$.

\begin{thm}\label{ReturnTimeW}
Suppose $\omega \in \mathcal W$ and let $u$ denote the return time to $\mathcal W$ under the horocycle flow. Then we have two cases, either $\omega\in \mathcal W_{sa}$ or $\omega \in \mathcal W_{sl}$.
\begin{itemize}
\item Suppose $\omega\in \mathcal W_{sl}$. Then $\Pi(\omega)=(p_{a,b},v)$ for some $a,b\in\Delta$ where $v=(v_1,v_2)^T$ and the return time to $\mathcal W$ under horocycle flow is 
$$\mathcal R (\omega)= \begin{cases}
   v_2/v_1, &\text{ if } b+v_1 \le 1 \text{ and }\\
 (ab)^{-1}, &\text{ if } b+v_1 > 1.\\
\end{cases}$$

\item Suppose $\omega\in \mathcal W_{sa}$. Then $\Pi(\omega)=(h_sp_{a,b},(\a,0)^T)$ for $(a,b,s,\a)\in \Omega$ and the return time to $\mathcal W$ under horocycle flow is 
$$\mathcal R(\omega) = \begin{cases}
   (ab)^{-1} -s, &\text{ if } \Pi(\omega)\in\Omega_2\cup\Omega_4,\\
 \frac{sa}{\a-a}, &\text{ if } \Pi(\omega)\in\Omega_1 \text{ and }\\
\frac{a^{-1}-sb}{b+\a}, &\text{ if } \Pi(\omega)\in\Omega_3,
\end{cases}$$
where $\Omega_i$, $i=1,\ldots,4$ form the atoms of the partition of $\Omega$.

\end{itemize}
\end{thm}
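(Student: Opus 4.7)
The plan is to reduce to the two return-time problems already in hand. Since $\mathcal{W} = \mathcal{W}_{sl} \cup \mathcal{W}_{sa}$,
$$\mathcal{R}(\omega) = \min\{\mathcal{R}_{sl}(\omega),\, \mathcal{R}_{sa}(\omega)\},$$
where $\mathcal{R}_{sl}(\omega)$ is the first horocycle time at which the underlying lattice $g\Z^2$ enters $\Delta$ (i.e.\ acquires a short horizontal primitive vector), and $\mathcal{R}_{sa}(\omega)$ is the first time the affine lattice $g\Z^2 + v$ enters $\Omega$. Because the projection $\Pi:\E\to\mathcal{H}(0,0)$ is equivariant under horocycle flow, these are intrinsic return-time questions for $X_2$ and $AX_2$, addressed by the Athreya--Cheung cross-section for $X_2$ and by Theorem \ref{ReturnTimeMap} respectively. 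The remaining work is a case-by-case comparison.

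In Case 1 ($\omega \in \mathcal{W}_{sl}$, so $g = p_{a,b}$ with $(a,b) \in \Delta$), the Athreya--Cheung formula gives $\mathcal{R}_{sl}(\omega) = (ab)^{-1}$. For $\mathcal{R}_{sa}(\omega)$ I would rerun the minimization argument from the proof of Theorem \ref{ReturnTimeMap}: horocycle flow sends a vector $(p,q)^T \in g\Z^2 + v$ to $(p,q-up)^T$, which is a short horizontal vector iff $u = q/p$ with $p \in (0,1]$ and $q > 0$. Minimizing $q/p$ over translates $v + p_{a,b}n$, the candidate at $n=0$ has slope $v_2/v_1$; the hypothesis $b + v_1 \le 1$ is precisely what rules out the competing translate $v + (b,a^{-1})^T$ (and symmetric analogues) as giving a smaller slope, identifying $v$ itself as the minimizer. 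Comparing $v_2/v_1$ with $(ab)^{-1}$ yields the stated dichotomy: when $b + v_1 > 1$ the optimal affine candidate must shift its horizontal component back into $(0,1]$, which inflates its slope past $(ab)^{-1}$, so the lattice return wins.

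In Case 2 ($\omega \in \mathcal{W}_{sa}$, so $g = h_s p_{a,b}$), the lattice sits at horocycle parameter $s$ along the orbit of $p_{a,b}\Z^2 \in \Delta$, so Athreya--Cheung gives $\mathcal{R}_{sl}(\omega) = (ab)^{-1} - s$. Theorem \ref{ReturnTimeMap} already supplies $\mathcal{R}_{sa}(\omega) = R(a,b,s,\alpha)$ with explicit formulas on each atom $\Omega_1,\ldots,\Omega_4$. The remainder is a direct inequality check on each atom: on $\Omega_1 \cup \Omega_3$ the affine formulas $sa/(\alpha-a)$ and $(a^{-1}-sb)/(b+\alpha)$ are smaller than $(ab)^{-1} - s$ (affine return wins), while on $\Omega_2 \cup \Omega_4$ the $j$-dependent expressions (with $j = \lfloor(1+a-\alpha)/b\rfloor$) exceed $(ab)^{-1} - s$ (lattice return wins). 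Each of these four comparisons reduces to substituting the atom-defining inequalities.

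The main obstacle I anticipate is the algebra in Case 2: even with the explicit formulas from Theorem \ref{ReturnTimeMap}, verifying each of the four inequalities requires carefully tracking how $j$ interacts with the defining constraints of $\Omega_i$ and the range of $s$. A parallel subtlety in Case 1 is that $v$ is only defined modulo $p_{a,b}\Z^2$, so part of the argument is implicitly fixing the canonical representative for which $v_2/v_1$ is a meaningful positive slope, and then showing $v$---not some other translate---is the slope-minimizing affine vector exactly under the condition $b + v_1 \le 1$.
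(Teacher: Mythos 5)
Your proposal is correct and follows essentially the same route as the paper: decompose $\mathcal R$ as $\min\{\mathcal R_{sl},\mathcal R_{sa}\}$, read off $\mathcal R_{sl}$ from the Athreya--Cheung cross-section for $X_2$ and $\mathcal R_{sa}$ from Theorem \ref{ReturnTimeMap}, then compare the two on each atom. The paper packages the rerun minimization you describe for Case~1 as a standalone lemma computing $\rho(a,b,v_1,v_2)$ before taking the minimum, and in Case~2 it verifies exactly the four atom-wise inequalities you anticipate (each reducing either to the defining inequality $s\gtrless(\a-a)/(ab\a)$ of $\Omega_1$ vs.\ $\Omega_2$, or to the trivially true $s<(ab)^{-1}$ on $\Omega_3,\Omega_4$), so your outline matches the paper's proof step for step.
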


We first prove a lemma that computes the time it would take to travel from $\mathcal W _{sl}$ to $\mathcal W _{sl}$ under horocycle flow.

\begin{lem} Let $\rho = \rho (a,b,v_1,v_2)$ denote the time it would take to travel from $\mathcal W _{sl}$ to $\mathcal W _{sl}$ under horocycle flow. Then
$$ \rho (a,b,v_1,v_2) =  \begin{cases}
  \frac{v_2}{v_1}= \text{ slope }(v) , &\text{ if } b+v_1 \le 1 \text{ and }\\
   \frac{v_2 + ja^{-1}}{v_1+jb-a}, &\text{ if } b+v_1 > 1\\
\end{cases}$$

\end{lem}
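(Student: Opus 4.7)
The plan is to follow the template of Theorem \ref{ReturnTimeMap}, specialized to the case $s = 0$ (so the lattice is $p_{a,b}\Z^2$, with no extra horocycle shear) and adapted to allow an arbitrary affine shift $v = (v_1,v_2)^T$ rather than a horizontal one. A generic vector in the affine lattice $p_{a,b}\Z^2 + v$ has the form $(v_1 + ma + nb,\; v_2 + na^{-1})$ for $m,n \in \Z$. Under $h_u$ this vector becomes horizontal precisely when $u = (v_2 + na^{-1})/(v_1 + ma + nb)$, and for it to count as a short horizontal vector one needs $0 < v_1 + ma + nb \le 1$ and $v_2 + na^{-1} > 0$. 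Exactly as in the proof of Theorem \ref{ReturnTimeMap}, for each fixed $n$ the optimal $m$ is $m_n = \lfloor (1 - v_1 - nb)/a \rfloor$ (it maximizes the denominator while keeping it $\le 1$), reducing the search to a one-parameter minimization over $n$.

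I then split according to the sign of $1 - v_1 - b$. In Case 1 ($b + v_1 \le 1$), the vector $v$ itself (at $m=n=0$) is admissible since $v_1 \in (0,1]$, giving slope $v_2/v_1$. I consider the ``ladder'' $v + n(b,a^{-1})$, $n \ge 1$, which remains admissible until its horizontal exceeds $1$; a cross-multiplication shows each rung has strictly larger slope than the previous, so $v$ itself minimizes and $\rho = v_2/v_1$.

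In Case 2 ($b + v_1 > 1$), the single step $v \mapsto v + (b,a^{-1})$ already overshoots the admissible strip, so one shifts by $-(a,0)$ and considers the family $\gamma_j = v - (a,0) + j(b,a^{-1})$, $j \ge 1$, with horizontal $v_1 + jb - a$ and vertical $v_2 + ja^{-1}$. Here the analogous cross-multiplication shows that adding $(b,a^{-1})$ strictly decreases the slope, so the optimum occurs at the largest admissible $j$, namely $j = \lfloor (1 + a - v_1)/b \rfloor$, yielding the claimed formula $\rho = (v_2 + ja^{-1})/(v_1 + jb - a)$.

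The main technical obstacle is the pair of monotonicity checks — increasing slope along the Case 1 ladder, decreasing slope along the Case 2 ladder — together with verifying that no other $(m,n)$ (in particular, no $n \le -1$) can produce a smaller positive slope. Both are of the same flavor as the $\gamma_0$ versus $\gamma_1$ comparison in the proof of Theorem \ref{ReturnTimeMap}, and each reduces to checking the sign of a simple bilinear expression in $a, b, v_1, v_2$ whose sign is pinned down exactly by the dichotomy $b + v_1 \lessgtr 1$ together with $(a,b) \in \Delta$.
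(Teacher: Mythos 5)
Your proposal follows the same strategy as the paper's proof: parameterize the affine lattice vectors by $(m,n)$, observe that for each $n$ the optimal $m$ is $m_n = \lfloor (1-v_1-nb)/a \rfloor$, reduce to a one-parameter search over $n \geq 0$, and compare slopes along a ladder of candidates whose monotonicity direction is governed by the sign of $1 - v_1 - b$. The paper does exactly this, with $\gamma_1$ computed explicitly as the reference candidate and the remaining $\gamma_n$ obtained by adding or subtracting $(b,a^{-1})^T$.

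However, there is a gap in your Case 2 argument that you should address explicitly, and it is worth flagging because it is easy to miss. You write that ``the single step $v \mapsto v + (b,a^{-1})$ already overshoots,'' shift by $-(a,0)$, and run the ladder $\gamma_j = v - (a,0) + j(b,a^{-1})$ for $j \geq 1$, then say the only other values to exclude are $n \leq -1$. But this ladder only covers $n \geq 1$; the $n=0$ candidate $\gamma_0 = (v_1 + m_0 a,\, v_2)$ with $m_0 = \lfloor(1-v_1)/a\rfloor$ is also admissible whenever $v_1 + m_0 a \in (0,1]$ (which it always is), and its slope $v_2/(v_1 + m_0 a)$ need not exceed $\mathrm{slope}(\gamma_j)$. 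Concretely, if $v_1 \le 1$ then $m_0 \geq 0$ and $\gamma_0$ has horizontal at least $v_1$, so its slope can be considerably smaller than $(v_2 + ja^{-1})/(v_1 + jb - a)$. The paper's proof handles $\gamma_0$ by writing $\gamma_0 = \gamma_1 - (b,a^{-1})^T$, but note this is the lattice point with $(m,n)=(m_1,0)$, which coincides with the optimal $(m_0,0)$ only when $m_0 = m_1$; since $m_0 - m_1 \approx b/a$ can be $\geq 1$, the two need not agree. In both your argument and the paper's, the validity of the Case 2 formula hinges on the normalization of the representative $v$ in $\R^2/p_{a,b}\Z^2$ — specifically on whether $m_0 = 0$ (equivalently $v_1 \in (1-a,1]$) can be assumed, and on ruling out that this $n=0$ candidate beats $\gamma_j$. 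You should spell out the exact normalization of $v$ you are using and verify $\mathrm{slope}(\gamma_0) \geq \mathrm{slope}(\gamma_j)$ under it, since the cross-multiplication you invoke for the $j \geq 1$ ladder does not by itself control the comparison against $n=0$.
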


\begin{proof}
Suppose that  $\omega\in\mathcal W _{sl}$. Then $\Pi(\omega) =(p_{a,b},v)$ for some $(a,b)\in\Delta.$ Then the only way $(p_{a,b}, v)$  will have a short horizontal affine vector under the horocycle flow will be when $h_u v$ is short. Recall, that $v$ is actually only defined up to $p_{a,b}\Z^2$. However, when we write ``$v$" we mean the representative inside of the standard fundamental domain of $\R^2/p_{a,b}\Z^2$.

Then $h_{\rho(a,b,v_1,v_2)} v$ is short when $\rho(a,b,v_1,v_2) = \text{slope}(v) = \frac{v_2}{v_1}.$ Since $v$ is actually only defined up to $p_{a,b}\Z^2$ we see the next value $u$ so that $h_u v$ is short is 
$$\rho(a,b,v_1,v_2) = \min \frac{v_2 + a^{-1}n}{v_1 + am + bn}$$
where the minimum is taken over $(m,n)\in \Z$ such that 
\begin{itemize}
\item $\frac{v_2 + a^{-1}n}{v_1 + am + bn}>0$ since we want positive slope/return time and  
\item$v_1 + am + bn \in(0,1]$ since we want a short vector.
\end{itemize}
The first condition says that $v_2 + a^{-1}n>0\iff n>-v_2 a>-1$. Since $n\in \Z$, then $n\ge 0$. 

Now fix $n\ge0$. Then the second condition of $v_1 + am + bn \in(0,1]$ tells us that 
$$\frac{-(nb+v_1)}{a}<m \le\frac{1-(nb+v_1)}{a}.$$
Since we are trying to minimize the slope, then we want to maximize $m$, so that we take
$$m= m_n = \left\lfloor\frac{1-(nb+v_1)}{a}\right\rfloor.$$

Hence, for every $n\ge0$, we have the candidate
$$\gamma_n = \left(
  \begin{array}{ c c }
     v_1+am_n + bn \\
    v_2+a^{-1}n
  \end{array} \right).$$
Now observe that since we are looking at a minimum, over $n$ of the $\gamma_n$ that $\gamma_1$ is an upper bound. We are interested in $\gamma_1$, because we can describe it explicitly. In particular,
 $$m_1 = \begin{cases}
 0, &\text{ if } b+v_1 \le 1\\
 -1, &\text{ if } b+v_1 < 1\\
\end{cases}$$
so that
$$\gamma_1 = \begin{cases}
  \left(
  \begin{array}{ c c }
     v_1 + b \\
    v_2+a^{-1}
  \end{array} \right), &\text{ if } b+v_1 \le 1 \text{ and }\\
  \left(
  \begin{array}{ c c }
     v_1 + b -a \\
    v_2+a^{-1}
  \end{array} \right), &\text{ if } b+v_1 < 1\\
\end{cases}$$
 We are interested in explicitly knowing $\gamma_1$ because this allows us to compute all the other candidates $\gamma_n$. For example, to go to the candidate $\gamma_1$ to the candidate at level $2$, all we have to do is add the basis vector $(b, a^{-1})^T$ of the lattice $p_{a,b}\Z^2$:
 $$\gamma_2 = \gamma_1 + \left(
  \begin{array}{ c c }
      b \\
    a^{-1}
  \end{array} \right).$$
  Similarly, 
$$\gamma_3 = \gamma_1 + 2\left(
  \begin{array}{ c c }
      b \\
    a^{-1}
  \end{array} \right)\text{ and } \gamma_0 = \gamma_1 - \left(
  \begin{array}{ c c }
      b \\
    a^{-1}
  \end{array} \right).$$
  Of course, if we add the basis vector $(b, a^{-1})^T$ too many times, the horizontal will become larger than 1 and hence we cease to have a candidate. This is amended by subtracting the other basis vector $(a,0)^T$, but this makes the slope larger since we are decreasing the horizontal. In other words, the only candidates $\gamma_n$ that we are interested are the ones that we can add the basis vector $(b, a^{-1})^T$ to and so that they still have horizontal less than or equal to 1. In particular, this is a finite list. We can then obtain the candidate with minimum slope by observing how adding the basis vector $(b, a^{-1})^T$ changes the slope compared to $\gamma_1$. Now notice that 
$$ \begin{cases}
\text{ if } b+v_1<1, \text{ adding } \left(
  \begin{array}{ c c }
     b \\
    a^{-1}
  \end{array} \right) \text{ increases slope . }\\
 \text{ if } b+v_1>1,\text{  adding } \left(  
  \begin{array}{ c c }
     b \\
    a^{-1}
  \end{array} \right) \text{ decreases slope. }\\
\end{cases}$$
  We conclude that
  \begin{itemize}
  \item If  $b+v_1<1$, then the minimum of the slopes is $\min\{\text{ slope }(\gamma_0), \text{ slope } (\gamma_1)\}$. A direct calculation shows the minimum of these two is $\text{ slope }(\gamma_0) = v_2/v_1$.
  \item  If  $b+v_1>1$, then the minimum of the slopes is slope $(\gamma_j)$, where $j$ is the maximum number of times we can add $\left(
  \begin{array}{ c c }
     b \\
    a^{-1}
  \end{array} \right)$ to $\gamma_1$ so that the horizontal of $\gamma_j$ is smaller than 1. We compute this value of $j$:
This is the largest number so that
$$\text{ horizontal }(\gamma_j) = jb+v_1 -a  \le 1<(j+1)b+v_1 -a \iff j=\left\lfloor \frac{a+1-v_1}{b}\right\rfloor.$$
  \end{itemize}
Hence, for an element $\omega\in\mathcal W _{sl}$, with $\Pi(\omega)=(p_{a,b},v)$, to return to $\mathcal W _{sa}$ under the horocycle flow, it needs to travel $\rho$ units of time where
$$\rho(a,b,v_1,v_2) = \begin{cases}
   \text{ slope }(\gamma_0) = \text{ slope }(v) = v_2/v_1, &\text{ if } b+v_1 \le 1 \text{ and }\\
  \text{ slope }(\gamma_j) = \frac{v_2 + ja^{-1}}{v_1+jb-a}, &\text{ if } b+v_1 > 1\\
\end{cases}$$
\end{proof}

Now we give the proof of the return time to $\mathcal W$ under horocycle flow.
\begin{proof}[Proof of Theorem \ref{ReturnTimeW}]

\textbf{(Step 1: Computing time from $W_ {sa}/\mathcal W_{sl}$ to $W_{sa}/\mathcal W_{sl}$)}
We implement the first part of the strategy outlined at the beginning of this section and compute how much time is needed for an element in $\mathcal W$ to return to either $\mathcal W_{sl}$ or $\mathcal W_{sa}$.

\textbf{1. Time from $\mathcal W _{sa}$ to $\mathcal W _{sa}$:} Suppose that $\omega \in \mathcal W _{sa}$ so that $\Pi(\omega)=(h_sp_{a,b},(\a,0)^T)$ for $(a,b,s,\a)\in \Omega$. Consequently,  the return time to $\mathcal W _{sa} \subset \mathcal W$ is exactly the return time from the previous section on affine lattices. Namely, the return time is $R(a,b,s,\a)$.

\textbf{2. Time from $\mathcal W _{sa}$ to $\mathcal W _{sl}$:} Suppose that $\omega \in \mathcal W _{sa}$ so that $\Pi(\omega)=(h_sp_{a,b},(\a,0)^T)$ for $(a,b,s,\a)\in \Omega$. Consequently,  under the horocycle flow, the next time we get a short vector in the lattice part is when the lattice part contains a short vector. Hence, the return time to $\mathcal W _{sl}\subset \mathcal W$ is $(ab)^{-1}-s$ by the results of \cite{MR3214280}.

\textbf{3. Time from $\mathcal W _{sl}$ to $\mathcal W _{sl}$:} Suppose that  $\omega\in\mathcal W _{sl}$ so that $\Pi(\omega) =(p_{a,b},v)$ for some $(a,b)\in\Delta.$ Thus, under the horocycle flow, the next time we get a short vector in the lattice part is the return time obtained in \cite{{MR3214280}}. Hence, the return time to $\mathcal W _{sl}\subset \mathcal W$ is $(ab)^{-1}$ by the results of \cite{MR3214280}.

\textbf{4. Time from $\mathcal W _{sl}$ to $\mathcal W _{sa}$:}  The only difficult return time $\rho(a,b,v_1,v_2)$ to compute is for an element $\omega\in\mathcal W _{sl}$ to return to $\mathcal W _{sa}$.  This was computed in the previous lemma to be
$$\rho(a,b,v_1,v_2) = \begin{cases}
 \frac{v_2}{v_1}, &\text{ if } b+v_1 \le 1 \text{ and }\\
 \frac{v_2 + ja^{-1}}{v_1+jb-a}, &\text{ if } b+v_1 > 1\\
\end{cases}.$$

We collect the times collected above now to deduce that for $\omega \in \mathcal W$ the return time $u$ to $\mathcal W$ under the horocycle flow is given by the following two cases.

 (Case 1: $\omega \in \mathcal W_{sl}$)  For $\omega\in \mathcal W_{sl}$ the return time is 
$$u =   \min\{(ab)^{-1}, \rho(a,b,v_1,v_2)\}$$
where $\rho$ is the minimum time it takes for an affine lattice $(p_{a,b},v)$ to have a short affine vector in the affine piece.

 (Case 2: $\omega \in \mathcal W_{sa}$)   For $\omega\in \mathcal W_{sa}$ the return time is
$$u  = \min \{(ab)^{-1}-s,R(a,b,s,\a)\}$$
where $R$ is the return map for affine lattices.

\textbf{(Step 2: Computing the minimum)}
 We now implement the second step of the strategy and compute the minimums of the times calculated from step 1. We have two cases given by the partition of $\mathcal W$.
\begin{itemize}
\item  (Case 1: $\omega \in \mathcal W_{sl}$)  For $\omega\in \mathcal W_{sl}$ the return time is 
$$u =   \min\{(ab)^{-1}, \rho(a,b,v_1,v_2)\}$$
where we have $\Pi(\omega)=(p_{a,b},v)$ for some $(a,b)\in\Delta$ and $v=(v_1,v_2)^T\in p_{a,b}\Z^2$. The return time is 
$$u = \begin{cases}
   \min \{(ab)^{-1},v_2/v_1\}, &\text{ if } b+v_1 \le 1 \text{ and }\\
  \min\{ (ab)^{-1}, \frac{v_2 + ja^{-1}}{v_1+jb-a}\}, &\text{ if } b+v_1 > 1.\\
\end{cases}.$$
By our choice of $v$ as the representative lying in the standard fundamental domain of $\R^2/p_{a,b}\Z^2$ spanned by the columns of $p_{a,b}$ we have 
$$u = \begin{cases}
   \min \{(ab)^{-1},v_2/v_1\} = v_2/v_1, &\text{ if } b+v_1 \le 1 \text{ and }\\
  \min\{ (ab)^{-1}, \frac{v_2 + ja^{-1}}{v_1+jb-a}\} = (ab)^{-1} , &\text{ if } b+v_1 > 1.\\
\end{cases}.$$
\item  (Case 2: $\omega \in \mathcal W_{sa}$)  For $\omega\in \mathcal W_{sa}$ the return time is
$$u  = \min \{(ab)^{-1}-s,R(a,b,s,\a)\}$$
where $R$ is the return map for affine lattices and $(a,b,s,\a)\in\Omega$. Recall the map is defined piecewise as
$$R(a,b,s,\a) = \begin{cases}
 \frac{sa}{\a-a}, &\text{ if } (a,b,s,\a)\in\Omega_1\\
 \frac{j(a^{-1}-sb)+sa}{\a-a+jb}, &\text{ if } (a,b,s,\a)\in\Omega_2\\
 \frac{a^{-1}-sb}{b+\a}, &\text{ if } (a,b,s,\a)\in\Omega_3\\
 \frac{j(a^{-1}-sb)}{\a-a+jb}, &\text{ if } (a,b,s,\a)\in\Omega_4.
\end{cases}$$
where $j=\left\lfloor \frac{1+a-\alpha}{b}\right\rfloor$. We now compute the return time
$$u  = \min \{(ab)^{-1}-s,R(a,b,s,\a)\}.$$
\begin{enumerate}
\item If $(a,b,s,\a)\in\Omega_1$, then
$$u=\frac{sa}{\a-a}$$
because $$\frac{sa}{\a-a}<(ab)^{-1}-s\iff s<\frac{\a-a}{ab\a}$$ and this inequality is one of the defining ones of $\Omega_1.$
\item If $(a,b,s,\a)\in\Omega_2$, then
$$u=(ab)^{-1}-s$$
because $$(ab)^{-1}-s<\frac{j(a^{-1}-sb)+sa}{\a-a+jb}\iff s>\frac{\a-a}{ab\a}$$ and this inequality is one of the defining ones of $\Omega_2.$
\item If $(a,b,s,\a)\in\Omega_3$, then
$$u=\frac{a^{-1}-sb}{b+\a}$$
because $$\frac{a^{-1}-sb}{b+\a}<(ab)^{-1}-s\iff s<\frac{1}{ab}$$ and this inequality is one of the defining ones of $\Omega.$
\item If $(a,b,s,\a)\in\Omega_4$, then
$$u=(ab)^{-1}-s$$
because $$(ab)^{-1}-s< \frac{j(a^{-1}-sb)}{\a-a+jb}\iff s<\frac{1}{ab}$$ and this inequality is one of the defining ones of $\Omega.$
\end{enumerate}
\end{itemize}
\end{proof}


\section{Invariant measures and gap distributions}
In this section we prove the existence of a density function for the slope gap question for twice marked tori and for doubled slit tori. We will classify measures on the transversal $\Omega$ and show various properties of the cumulative distribution function of the gaps such as support at 0 and the decay rate. We will also consider an invariant measure on the transversal  $\mathcal W$.
\subsection{Invariant Measures on $\Omega$} In this section we review some Ratner theory and use it classify invariant measures on $\Omega$.
\subsubsection{Ratner Theory for $AX_2$}
In the early 90's Marina Ratner published a series of influential works \cite{MR1075042}, \cite{MR1062971}, and \cite{MR1135878} that gave a mostly complete understanding of ergodic probability measures for unipotent dynamical systems on homogenous spaces. We briefly review her work in the context of horocycle flow on $AX_2$. Our exposition borrows ideas from \cite{MR2582821}, \cite{MR2648693}, and \cite{MR3332893}.

Consider the vertical translation $V_\beta:AX_2\to AX_2$
$$V_\beta(g,v) = \left(g,v +  \left(
  \begin{array}{ c c }
     0 \\
    \beta
  \end{array} \right)\right)$$
for some $\beta\in \R$. Notice that this flow always commutes with $h_s$. Hence, if $\mu$ is an ergodic probability measure invariant under horocycle flow, then so is the measure obtained by pre-composing with $V_\beta$.  This makes it interesting to consider the vertical translation if one is interested in measures invariant under the horocycle flow.

By disintegrating, any measure $\,d\mu (g,v)$ on $AX_2$  can be decomposed into the measure 
$$\,d\mu (g,v) =\,d\lambda_g(v) \,d \nu (g)$$
 where  $\,d \nu (g)$ is a measure  on the lattice part and $\,d\lambda_g(v)$ is a measure on the fiber. Notice that the vertical translation only acts on the fiber piece of an affine lattice $(g,v)$. (That is, it acts on the torus $\R^2/g\Z^2.$)  We can use previous results of \cite{MR0393339} (in the cocompact case) and \cite{MR629475}  (in the general lattice case) to understand possibilities for $\nu (g)$. These results show that either $\nu$ is Haar measure on $X_2$ or supports a periodic measure. Given these two possible measures on the lattice part we can the try to understand how vertical translation acts on the fiber $\R^2/g\Z^2$ to gain a complete understanding of possibilities for $\mu$. This is carried out in \cite{MR2648693}. Before being more precise we make some definitions. 

The \emph{denominator} of an affine lattice $(g,v)$  is the minimal $d>0$ so that $v\in d^{-1}g\Z^2$ whenever this is defined. We denote the denominator by $d(g,v).$ Let $X[q]$ denote the set of affine lattices with denominator $q>0.$ That is,
$$X[q]=\left\{ (g,v)\in AX_2: d(g,v)=q    \right\}.$$

In the literature this is also referred to as the set of \emph{torsion} affine lattices since  the vector $v$ for any $(g,v)\in X[q]$ is a torsion point under the group structure of $\C/g\Z^2$.      The set of torsion affine lattices forms an $SL_2(\R)$-invariant subset of $AX_2$ of dimension 3. In particular, it is a homogenous space and can be identified as the quotient of $SL_2(\R)$ with a certain congruence subgroup depending on $q$. Since it is a homogenous space, then we can think of $X[q]$ as an $SL_2(\R)$ orbit of \emph{any} point in $X[q]$ e.g.

$$X[q]=SL_2(\R)\left(Id, \left(
  \begin{array}{ c c }
     1/q \\
    0
  \end{array} \right)\right).$$

To get an idea of the properties of these types of affine lattices, notice that $X[1]$ is just the space of lattices. Also notice that $\left(Id, \left(
  \begin{array}{ c c }
     1/2 \\
    1/2
  \end{array} \right)\right)$ is $X[2]$, but not in $X[4]$ by our minimality condition on the denominator. 

We now state Ratner's measure classification for horocycle flow on $AX_2$. 
\begin{thm}
If $\mu$ is an $h_u$-invariant ergodic probability measure, then $\mu$ is one of the following:
\begin{itemize}
\item $\mu=m_{AX_2}$, the Haar measure on $AX_2$,
\item $\mu$ is a \emph{torsion measure}. That is there is $q>0$ so that the support of $\mu$ is contained in the set of torsion affine lattices $X[q]$,
\item  there is $a>0$ and $ 0\le \a<a^{-1}$ so that the support of $\mu$  is
$$\text{Per}(a,\a):= \left\{h_sV_\beta \left(g_{\log(a)}, \left(
  \begin{array}{ c c }
     \a \\
    0
  \end{array} \right)\right) = \left(h_sg_{\log(a)}, \left(
  \begin{array}{ c c }
     \a \\
    \beta
  \end{array} \right)\right):s\in[0,a^2),\beta\in[0,a)\right\},$$

\item $\mu$ is supported on a $h_u$-periodic orbit.

\end{itemize}
\end{thm}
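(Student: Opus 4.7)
The strategy is to apply Ratner's measure classification theorem to the homogeneous space $G/\Gamma = AX_2$, where $G = SL_2(\R) \ltimes \R^2$ and $\Gamma = SL_2(\Z) \ltimes \Z^2$, and then enumerate the possibilities. Since $U := \{h_u : u \in \R\}$ is a one-parameter unipotent subgroup of $G$, Ratner's theorem implies that any $U$-invariant ergodic probability measure $\mu$ on $AX_2$ is homogeneous: there is a closed connected subgroup $H$ of $G$ with $U \subseteq H$ and a point $x_0 \in AX_2$ such that $Hx_0$ is closed, carries a finite $H$-invariant measure, and $\mu$ is this normalized measure.

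The first concrete step is to enumerate the closed connected subgroups $H$ of $G$ containing $U$. Let $p : G \to SL_2(\R)$ be the projection. Then $p(H)$ is a closed connected subgroup of $SL_2(\R)$ containing $U$, so it is one of $U$, the Borel subgroup $B = \{h_u g_t : u, t \in \R\}$, or all of $SL_2(\R)$. Moreover, $H \cap \R^2$ is a closed connected normal subgroup of $H$, hence invariant under the conjugation action of $H$ on $\R^2$; in particular it is preserved setwise by every $h_u$. A direct computation using $h_u(x,y)^T = (x, -ux + y)^T$ shows that the only $h_u$-invariant closed connected subgroups of $\R^2$ are $\{0\}$, the vertical line $\{(0,\beta)^T : \beta \in \R\}$, and all of $\R^2$.

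Next I would match each admissible pair $(p(H), H \cap \R^2)$ with one of the four measure types. If $p(H) = SL_2(\R)$, then irreducibility of the $SL_2(\R)$-action on $\R^2$ forces $H \cap \R^2 \in \{\{0\}, \R^2\}$. The case $H = G$ gives Haar measure on $AX_2$. The case $H = SL_2(\R)$ (embedded trivially) gives a closed $SL_2(\R)$-orbit of finite volume, which by the standard identification of $SL_2(\R)$-orbits on $AX_2$ is precisely a torsion locus $X[q]$, so $\mu$ is a torsion measure. If $p(H) = U$ and $H \cap \R^2$ is the vertical line, then $H$ is the two-dimensional abelian group generated by $\{h_u\}$ and $\{V_\beta\}$; the closed finite-volume orbits, obtained by analyzing the $\Gamma$-stabilizer of $(g_{\log a}, (\alpha, 0)^T)$, are exactly the sets $\text{Per}(a, \alpha)$ in the statement. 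If $p(H) = U$ and $H \cap \R^2 = \{0\}$, then $H = U$ and $\mu$ is supported on a periodic $h_u$-orbit.

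The remaining cases, where $p(H) = B$ or where $p(H) = U$ and $H \cap \R^2 = \R^2$, either do not support $h_u$-ergodic measures or reduce to cases already treated via ergodic decomposition. For instance, the three-dimensional subgroup $U \ltimes \R^2$ produces orbits fibering over periodic horocycles in $X_2$, but the $h_u$-action on such an orbit is not ergodic and its ergodic decomposition recovers the family $\text{Per}(a, \alpha)$. The main obstacle is precisely this verification that no further subgroup produces a genuinely new ergodic measure, together with pinning down the parameter ranges $s \in [0, a^2)$, $\beta \in [0, a)$, and $\alpha \in [0, a^{-1})$ in $\text{Per}(a, \alpha)$ by locating a fundamental domain for the $\Gamma$-action on the two-dimensional $H$-orbit; the bookkeeping here requires using the periodicity results for horocycles through $g_{\log a}$ in $X_2$ combined with the lattice structure of $g_{\log a}\Z^2$ in the fiber.
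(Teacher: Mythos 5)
The paper does not actually prove this theorem: it states it, remarks that the measures split according to whether the pushforward to $X_2$ is Haar or a periodic-horocycle measure (citing Furstenberg and Dani for that dichotomy), and defers the fiberwise analysis to Eskin's survey. Your proposal instead carries out the classification directly from Ratner's homogeneity theorem by enumerating the closed connected subgroups $H$ with $U\subseteq H\subseteq SL_2(\R)\ltimes\R^2$, and this is a genuinely different organization of the argument. It is also correct in outline. Your enumeration of the pairs $(p(H),\,\mathfrak h\cap\R^2)$ is exhaustive (the constraint that $\mathfrak h\cap\R^2$ be $\mathrm{ad}(\mathfrak u)$-invariant is exactly right, and the case $p(H)=SL_2(\R)$, $\mathfrak h\cap\R^2=\R e_2$ is correctly ruled out by irreducibility), and your identification of each surviving case with one of the four bullets matches the theorem. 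What the disintegration route buys is that it makes manifest the remark the paper appends after the theorem, namely that the first two cases sit over Haar on $X_2$ and the last two over periodic horocycles, and it localizes the genuinely new work to the fiber tori; what your route buys is a cleaner and more self-contained deduction from Ratner, since the whole classification becomes an exercise in enumerating subalgebras.

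Two points you flag as obstacles deserve a sentence each to close the gap. First, the exclusion of $p(H)=B$: this follows because any closed connected $H$ with $p(H)=B$ is non-unimodular (take a lift of $g_t$ and note $\mathrm{tr}\,\mathrm{ad}|_{\mathfrak h}\neq 0$), hence admits no lattice, hence no closed orbit of finite $H$-invariant volume, so Ratner's theorem never produces such an $H$. Second, the exclusion of $H=U\ltimes\R^2$: you are right that this $H$ does admit closed finite-volume orbits over periodic horocycles, but the resulting $H$-invariant measure cannot be $h_u$-ergodic because $h_u$ preserves the horizontal coordinate $\alpha$ of the affine vector in those coordinates; since Ratner's theorem says $\mu$ equals the $H$-Haar measure on its support and $\mu$ is assumed $h_u$-ergodic by hypothesis, this case cannot occur. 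With these two observations spelled out, your argument is complete modulo the routine bookkeeping of the fundamental domain for $\text{Per}(a,\alpha)$, which you correctly identify and defer. A small inessential slip: you do not need $H\cap\R^2$ to be connected, only that its Lie algebra $\mathfrak h\cap\R^2$ is one of the three $\mathrm{ad}(\mathfrak u)$-invariant subspaces; that is what your computation actually shows.
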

The first two measure above are those whose lattice measure is Haar measure on $X_2$, while the remaining two are those for which the lattice measure is supported on a $h_u$-periodic orbit.

Lastly, we observe that any measure in the list above can be precomposed by a vertical translation $V_\beta$ to produce more ergodic probability measures. We will mainly concern ourselves with the above list.

\subsubsection{Invariant measures for $\Omega$}

To classify ergodic probability measures invariant under the return map on the transversal $\Omega$, we use Ratner's theorem which classifies  ergodic probability measures for unipotent dynamical systems on homogenous spaces. Since we have a transversal, then there is a bridge between measures on $(AX_2,h_s)$ and measures on $(\Omega,T)$. While this bridge is well known by now, the interested reader can find details in \cite{MR3214280} .

As a corollary to Ratner's theorem we can classify $T$-invariant measures on the transversal $\Omega$ by disintegration.

\begin{thm}\label{MeasureClassificationOmega}
If $\nu$ is a $T$-invariant ergodic probability measure, then 

\begin{itemize}
\item $\nu$ is induced by the Haar measure on $AX_2$ i.e. $d \nu =  2dsdbdad\a $, 
\item $\nu$ is induced by a torsion measure on $AX_2$ i.e. there is an integer $q>0$ so that the support of $\nu$ is the set of affine lattices 
$$\left \{  \left  (p_{a,b},\left (
  \begin{array}{ c c }
     a/q \\
    0
  \end{array} \right) \right ):(a,b)\in\Delta\right\}.$$
\item $\nu$ is supported on $$\Omega\cap \text{Per}(a,\a)= \left\{ \left(h_sg_{\log(a)}, \left(
  \begin{array}{ c c }
     \a \\
    0
  \end{array} \right)\right):s\in[0,a^2)\right\}$$
for some  $a>0$ and $ 0\le \a<\min\{a^{-1},1\}$,
\item $\nu$ is supported on a $T$-periodic orbit.
\end{itemize}
\end{thm}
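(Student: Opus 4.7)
The plan is to pass between horocycle-invariant measures on the whole space $AX_2$ and $T$-invariant measures on the transversal $\Omega$ via the standard Ambrose--Kakutani-type bridge, apply the Ratner-type classification recalled just before the theorem (Theorem on $h_u$-invariant ergodic probability measures on $AX_2$), and then compute, for each of the four Ratner pieces, exactly what the induced transversal measure looks like in the $(a,b,s,\alpha)$ coordinates supplied by Proposition \ref{ParameterizationOmega}.

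More concretely, first I would invoke the bridge: given a $T$-invariant ergodic probability measure $\nu$ on $\Omega$, the measure
\[
d\mu = \frac{1}{\int_\Omega R\,d\nu}\,d\nu \otimes du \quad \text{on } \Omega\times[0,R(\cdot)]
\]
pushed forward under $(\omega,u)\mapsto h_u\omega$ gives an $h_u$-invariant ergodic probability measure on $AX_2$, and conversely disintegration of an $h_u$-invariant ergodic measure along the flow lines produces a $T$-invariant ergodic measure on $\Omega$. This correspondence is a bijection between ergodic probability measures on the two sides, so classifying $\nu$ reduces to classifying $\mu$ and then restricting. The details are well documented (see the analogous argument in \cite{MR3214280}), so I would quote that machinery rather than reprove it.

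Next I would walk through the four possibilities for $\mu$ produced by the Ratner theorem stated above. For $\mu$ Haar, I would express Haar measure in the $(a,b,s,\alpha)$ coordinates. Writing a generic affine lattice as $(h_s p_{a,b},(\alpha,0)^T)$, the Haar measure on $SL_2(\R)\ltimes\R^2$ factors (because $h_s$ shears in one direction and the affine coordinate $\alpha$ moves along a transverse direction) so that its restriction to $\Omega$ equals a constant multiple of $ds\,db\,da\,d\alpha$; the constant is fixed by the normalization $\int_\Omega R\,d\nu = 1$, and the calculation tracking the Jacobian from $X_2$ to $\Delta$ is already done in \cite{MR3214280} (where the constant $2/\zeta(2)$ appears), so only the integration against $d\alpha$ is new. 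For $\mu$ a torsion measure supported on $X[q]$, I would use the homogeneous-space description $X[q]=SL_2(\R)\cdot(\mathrm{Id},(1/q,0)^T)$ and observe that a point $(h_sp_{a,b},(\alpha,0)^T)$ lies in $X[q]$ iff the integrality relations force $s=0$ and $\alpha\in\{a/q,2a/q,\ldots\}$; the $m=1$ sheet already sweeps out a full two-dimensional cross section of the three-dimensional orbit $X[q]$ and hence, under the Haar measure on $X[q]$, carries the induced transversal measure, giving exactly the support claimed. For $\mu$ supported on $\mathrm{Per}(a,\alpha)$, the intersection with $\Omega$ requires $\beta=0$ in the $\mathrm{Per}(a,\alpha)$ parameterization, which immediately cuts out the one-dimensional slice $\{(h_sg_{\log a},(\alpha,0)^T):s\in[0,a^2)\}$ that the theorem names. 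Finally, if $\mu$ is supported on an $h_u$-periodic orbit of $AX_2$, then the intersection with $\Omega$ is a finite orbit of $T$, giving a $T$-periodic orbit.

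The main obstacle I expect is the Haar case, not conceptually but computationally: one must keep careful track of the factor coming from the inverse return time in the Ambrose--Kakutani formula and show that the product measure on $\Delta\times\{s\in[0,(ab)^{-1})\}$ coming from $dsdbda$ (as in \cite{MR3214280}) lifts consistently to $\Omega$ with the extra factor $d\alpha$, leaving the constant $2$ that appears in the statement. A secondary care point is verifying that the torsion case produces no hidden extra components; one has to check that the sheets with $m\ge 2$ are either absent (because $X[q]$ is a single $SL_2(\R)$ orbit, so under the $SL_2(\Z)\ltimes\Z^2$ quotient different $m$'s may be identified) or are automatically absorbed into a measure supported on a lower-dimensional piece of $\Omega$ and therefore covered by the periodic-orbit case. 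Once these two bookkeeping checks are discharged, the four cases of Ratner's classification pass cleanly to the four cases in Theorem \ref{MeasureClassificationOmega}.
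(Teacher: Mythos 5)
Your overall strategy — invoke the flow/transversal measure correspondence, apply the Ratner classification on $AX_2$, and intersect each of the four cases with $\Omega$ — is exactly the paper's. The Haar, periodic-lattice, and $T$-periodic cases go through essentially as you describe. The gap is in the torsion case, which you flag as a ``care point'' but do not resolve, and which in fact the paper's own proof also mishandles.

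The step ``the integrality relations force $s=0$'' is not correct. Writing $g = h_sp_{a,b}$, $v=(\alpha,0)^T$, one has $g^{-1}v = \bigl((a^{-1}-sb)\alpha,\ as\alpha\bigr)^T$; membership in $X[q]$ only demands $g^{-1}v \in q^{-1}\Z^2$ with exact denominator $q$, which constrains $as\alpha$ to lie in $q^{-1}\Z$ but does not force it to vanish. Concretely, take $q=2$ and a point $\omega = (a,b,0,a/2)$ with $b+a/2<1$; the return map sends it to $T\omega = \bigl(a,\ b,\ R,\ b+a/2\bigr)$ with $R = a^{-1}/(b+a/2) > 0$, and a direct check gives $g^{-1}v = (1/2,1)$, denominator $2$. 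So $T\omega$ lies in $\Omega\cap X[2]$ yet has $s\ne 0$ and $\alpha \ne a/2$. Hence the set $\{(p_{a,b},(a/q,0)^T)\}$ is not $T$-invariant, and no $T$-invariant measure can have it as its support; the correct torsion support is the full intersection $\Omega\cap X[q]$, a strictly larger $2$-dimensional variety cut out by the conditions $(a^{-1}-sb)\alpha \in q^{-1}\Z$ and $as\alpha\in q^{-1}\Z$ with denominator exactly $q$. Your attempted resolution — that the ``$m=1$ sheet'' is a full cross section and therefore carries the whole induced measure — is not valid either: disintegrating a fully-supported flow-invariant measure over a transversal spreads mass across the entire intersection with the transversal, not over one chosen component.

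For what it is worth, the paper's proof contains the same unjustified step: it writes $(g,v) = h_sp_{a,b}\cdot(\mathrm{Id}, g^{-1}v)$ and equates $g^{-1}v$ with $(1/q,0)^T$, but the orbit description $X[q]=SL_2(\R)\cdot(\mathrm{Id},(1/q,0)^T)$ only says $(g,v)=h\cdot(\mathrm{Id},(1/q,0)^T)$ for \emph{some} $h\in SL_2(\R)$ — i.e.\ $g^{-1}v \equiv \gamma(1/q,0)^T \pmod{\Z^2}$ for some $\gamma\in SL_2(\Z)$ — which admits many more solutions than $s=0$, $\alpha=a/q$. So you were right to be suspicious here, but the correct fix requires describing $\Omega\cap X[q]$ in full rather than arguing that extra components disappear.
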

\begin{proof}
By our discussion about disintegration in the last section, it is clear that the disintegration of Haar measure on $AX_2$ is given by the product of 2-dimensional Lebesgue measure on the torus $\R^2/g\Z^2$ and the 3-dimensional Haar measure on $X_2.$ This measure is $SL_2(\R)$-invariant and hence horocycle invariant. On $\Omega$, we must consider only those affine lattices with a short horizontal vector so that the disintegration of Haar measure on $\Omega$ will be
$$\,dm_{\Omega} = 2\,ds\,db\,da\,d\a .$$
Now suppose that $\nu$   is a $T$-invariant ergodic probability measure on $\Omega$, different than $m_\Omega$. Consider $\, d\nu \, du$ on $AX_2$. By Ratner's theorem, we know the support of this measure on $AX_2$ can only be 3, 2, or 1 dimensional.

In case it is 3-dimensional, then it must be supported on $X[q]$ for some $q>0$. Hence, $\,d\nu$ is necessarily  2-dimensional and supported on $X[q]\cap\Omega$. We claim that 
$$\Omega\cap X[q] = \left \{  \left  (p_{a,b},\left (
  \begin{array}{ c c }
     a/q \\
    0
  \end{array} \right) \right ):(a,b)\in\Delta\right\}.$$

Indeed, if $(g,v)\in \Omega\cap X[q]\subseteq \Omega$, then it has the form $(g,v)=\left(h_sp_{a,b},\left (
  \begin{array}{ c c }
     \a \\
    0
  \end{array} \right)\right)$ where $(a,b)\in\Delta$, $s\in[0,\frac{1}{ab}) $, and $\a\in(0,1]$. On the other hand, notice that 
$$X[q] = SL_2(\R)\cdot\left(Id,\left(
  \begin{array}{ c c }
     q^{-1} \\
    0
  \end{array} \right)\right) $$ i.e. it is an orbit under $SL_2(\R)$ for any affine piece of denominator $q$. Then, we conclude that
$$(g,v)=\left(h_sp_{a,b},\left (
  \begin{array}{ c c }
     \a \\
    0
  \end{array} \right)  \right)= h_sp_{a,b}\cdot \left(Id,\left (
  \begin{array}{ c c }
     (a^{-1}-sb)\a \\
    sa
  \end{array} \right) \right)$$ so that
$$\left (
  \begin{array}{ c c }
     (a^{-1}-sb)\a \\
    sa
  \end{array} \right) = \left (
  \begin{array}{ c c }
    q^{-1} \\
    0
  \end{array} \right) .$$
The equality of the vertical component shows that $s=0$. Plugging this into the horizontal component shows that $\a =a/q$. This proves one containment. The other containment is obvious.

In case $\,d\nu \, du$ is 2 dimensional, then it is supported on $\text{Per}(a,\a)$ for some  $a>0$ and $ 0\le \a<\min\{a^{-1},1\}$ i.e. those affine lattices for which the lattice part is periodic and the affine piece has arbitrary vertical component.  Hence, $\,d\nu$ is supported on 
$$\Omega\cap \text{Per}(a,\a)= \left\{ \left(h_sg_{\log(a)}, \left(
  \begin{array}{ c c }
     \a \\
    0
  \end{array} \right)\right):s\in[0,a^2)\right\}.$$
  Lastly, if $\,d\nu \, du$ is 1-dimensional, then on $\Omega$ it becomes zero dimensional. This simply means that it is supported on a periodic orbit for $T$.
\end{proof}

We continue to refer to the measure $m_\Omega$ as the Haar measure on $\Omega$ and $m_q$ as a torsion measure. We obtain the following.
\begin{thm}\label{ThmMeasure} The measure $dm_{\Omega} = 2\,ds\,db\,da\,d\a $ is unique $T$-invariant ergodic probability measure on $\Omega$ that is absolutely continuous with respect to Lebesgue measure. Hence, for every $m_\Omega$-a.e. point $(a,b,s,\a)$ and every $f\in L^1(\Omega)$,
$$\frac{1}{N}\sum_{i=0} ^{N-1}f(T^i(a,b,s,\a))\to \int_\Omega f dm_{\Omega}.$$
\end{thm}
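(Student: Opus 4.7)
The statement has two parts: the characterization of $m_\Omega$ as the unique $T$-invariant ergodic absolutely continuous probability measure, and the pointwise equidistribution. The plan is to extract both from the measure classification in Theorem \ref{MeasureClassificationOmega} combined with Birkhoff's ergodic theorem; no new dynamical input is required.

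For uniqueness, the first step is to verify that $m_\Omega$ itself appears in the classification and enjoys the claimed properties. The $T$-invariance of $m_\Omega$ is immediate since $m_\Omega$ is the transversal measure induced by the $SL_2(\R)\ltimes\R^2$-invariant Haar measure on $AX_2$ (using the standard bridge between horocycle-invariant measures on $AX_2$ and return-map-invariant measures on the Poincar\'e section), and $m_\Omega$ appears explicitly in Theorem \ref{MeasureClassificationOmega} as one of the ergodic measures, so ergodicity is automatic. Probability comes after normalization (the total mass is a finite constant involving $\zeta(2)$, absorbed into the displayed density). The second step is to rule out every other ergodic measure on absolute-continuity grounds: the torsion measures live on the $2$-dimensional slice $\{(p_{a,b},(a/q,0)^T) : (a,b)\in\Delta\}$ (they force $s=0$ and $\alpha = a/q$); the measures on $\Omega\cap\mathrm{Per}(a,\alpha)$ live on $1$-dimensional curves inside $\Omega$; and $T$-periodic orbit measures are atomic. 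Each such set has Lebesgue measure zero in the $4$-dimensional $\Omega$, so none of these measures is absolutely continuous with respect to Lebesgue. Hence among the four families of ergodic probability measures, only $m_\Omega$ satisfies the absolute continuity requirement, giving uniqueness.

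For the second half of the statement, I simply apply the Birkhoff ergodic theorem to the ergodic system $(\Omega,T,m_\Omega)$: for any $f\in L^1(\Omega, m_\Omega)$ and $m_\Omega$-a.e.\ $x=(a,b,s,\alpha)\in\Omega$,
$$\frac{1}{N}\sum_{i=0}^{N-1} f(T^i x)\longrightarrow \int_\Omega f\, dm_\Omega.$$
Because $m_\Omega$ is mutually absolutely continuous with Lebesgue measure on $\Omega$, the $m_\Omega$-a.e.\ statement is equivalent to a Lebesgue-a.e.\ statement, which will be the form needed for the gap-distribution applications in the following sections.

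There is no real obstacle beyond a careful bookkeeping of the classification; the substantive content — especially the dimension count ruling out torsion and periodic contributions — was already packaged into Theorem \ref{MeasureClassificationOmega}. The only subtlety worth flagging explicitly is that one is implicitly using the standard correspondence between ergodic $T$-invariant measures on the transversal $\Omega$ and ergodic $h_u$-invariant measures on the ambient space $AX_2$ (via Ambrose--Kakutani/suspension), so that Ratner's classification on $AX_2$ really does transfer to a classification on $\Omega$ as stated.
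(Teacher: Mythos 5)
Your argument is correct and matches what the paper implicitly intends: Theorem~\ref{ThmMeasure} is stated as an immediate corollary of the classification in Theorem~\ref{MeasureClassificationOmega} without a separate written proof, and your dimension count (the torsion, periodic-lattice, and $T$-periodic supports are Lebesgue-null in the $4$-dimensional $\Omega$) combined with Birkhoff's ergodic theorem supplies exactly the reasoning the paper leaves to the reader. Your normalization caveat is also well placed, since the density as written integrates to a constant proportional to $\zeta(2)$ over $\Omega$ rather than to $1$, so a normalizing factor is implicitly absorbed.
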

In coordinates $(a,b,s,\a)$ integrating with respect to $dm_{\Omega}$ takes the form
$$\int _\Omega f dm_\Omega = \int _{0} ^1 \int_{0} ^1 \int_{1-a} ^ 1 \int _{0} ^{(ab)^{-1}} f(a,b,s,\a)\,ds\,db\,da\,d\a$$
and integrating with respect to one of the torsion measures $m_{q}$ takes the form
$$\int _\Omega f dm_{q} =\int_{(a,b)\in\Delta}f\left(p_{a,b},\left(
  \begin{array}{ c c }
     a/q \\
  0
  \end{array} \right)\right )\,da\,db=  \int_{a=0} ^1 \int_{b=1-a} ^ 1  f(a,b,0,a/q)\,db\,da.$$
\subsection{An invariant measure on $\mathcal W$}
Recall that $\mathcal H(0,0)=AX_2$ is a homogenous space so that it comes equipped with a Haar measure $m_{AX_2}$. Since $\W$ is a four fold cover of $AX_2$, we can endow it with a natural measure that is four copies of $m_{AX_2}$.  Denote this measure by $m_{\E}.$ This measure is $SL_2(\R)$-invariant and finite.  In particular, this measure is also invariant under the horocycle flow.

 There is a correspondence of measures invariant under a flow and measures invariant under the first return map of that flow to a transversal. Specializing to our situation, there is an induced measure on our transversal $\mathcal W$ which we denote by $m_\mathcal W$ that is invariant under the first return map. 

It can be written as the sum of a Lebesgue measure on $W_{sl}$ and $W_{sa}$ which we denote as $m_{sa}$ and $m_{sl}$. Explicitly, on $\W_{sa}$ we have $\Pi(\omega)= \Lambda_{(a,b,s,\a)}\in \Omega $ and so for $f\in L^1(\W_{sa})$, we have 
$$ \int_{\W_{sa}} f \,dm_{sa}=  4\int_{\Omega} f(a,b,s,\a)\,ds \,db \,da \,d\a.$$
where the 4 reflects the fact that the  $\Pi:\mathcal E\to \mathcal H(0,0)=AX_2$ is a 4-to-1 map.  Similarly, on $\W_{sl}$ we have $\Pi(\omega)= (p_{a,b},v)$  where $(a,b)\in \Delta$ and $v\in \R^2/p_{a,b}\Z^2$ and so for $f\in L^1(\W_{sl})$, we have 
$$ \int_{\W_{sl}} f \,dm_{sl}=  4 \iint\limits_{(a,b)\in\Delta}\int_{v\in \R^2/p_{a,b}\Z^2} f(a,b,v)\,dv \,db \,da.$$
Thus, the flat measure on $\W$ is given by the sum,
$$\int _\W f \,dm_{\W} = \int_{\W_{sl}} f \,dm_{sa} +\int_{\W_{sl}} f \,dm_{sl}.$$

\subsection{Gap distribution results}
In this section we prove the existence of the slope gap limit for twice marked tori and doubled slit tori and state various properties about their distribution and density functions. 

\begin{proof}(of Theorem \ref{ThmGapTori})

We prove the slope gap limit in Theorem \ref{ThmGapTori} of twice marked tori exists. Recall, from (\ref{slopetoerg}) we have
$$\frac{1}{N}|\mathcal G^N _V(\Lambda)\cap I|=\frac{1}{N}\sum_{i=0} ^{N-1}\chi_{ R^{-1}(I)}(T^i(\Lambda)).$$

Now suppose that $\Lambda$ is $\mu$-generic for some ergodic probablity measure $\mu$. We classified all such measures in Theorem \ref{ThmMeasure}. Then by definition of a $\mu$-generic point, we have that 
$$\frac{1}{N}\sum_{i=0} ^{N-1}\chi_{ R^{-1}(I)}(T^i(\Lambda))\to \mu(R^{-1}(I)).$$

Since the cumulative distribution function is given by the total area bounded by the return time hyperbolas, it is piecewise
real analytic. In particular, it is continuous and differentiable.
The probability density associated to the slope gap is simply the derivative of the cumulative distribution function. For example, for $m_\Omega$-generic points, we have the probability density is given by
$$g(t) := \frac{d}{dt}m_\Omega(R^{-1}(0,t)).$$

Hence, the proportion of gaps in an interval $I$ of an $m_\Omega$-generic affine lattice $\Lambda\in\Omega$ is given by
$$\lim_{N\to\infty}\frac{1}{N}|\mathcal G^N _V(\Lambda)\cap I|=\lim_{N\to\infty}\frac{1}{N}\sum_{i=0} ^{N-1}\chi_{ R^{-1}(I)}(T^i(\Lambda))= m_\Omega(R^{-1}(I))=\int_I g(t) \,dt.$$
Similarly, for $m_q$-generic points, we have the probability density is given by
$$g_q(t) := \frac{d}{dt}m_q(R^{-1}(0,t))$$ and the proportion of gaps given by
$$\lim_{N\to\infty}\frac{1}{N}|\mathcal G^N _V(\Lambda)\cap I|=\int_I g_q(t) \,dt.$$\end{proof}

\begin{proof}(of Theorem \ref{ThmGapSlitV})

A similar argument works to prove the slope gap limit (\ref{SlopeGapSlit}) for doubled slit tori exists for any $m_\mathcal W$-generic point of $\mathcal W$. Indeed,
$$\lim_{N\to\infty}\frac{1}{N}|\mathcal G^N _V(\Lambda_\omega)\cap I|=\lim_{N\to\infty}\frac{1}{N}\sum_{i=0} ^{N-1}\chi_{\mathcal R^{-1}(I)}(\mathcal T^i(\Lambda_\omega)) =m_{\W}(\mathcal R^{-1}(I)).$$
This proves Theorem \ref{ThmGapSlit}.
The probability density associated to the slope gap is simply the derivative of the cumulative distribution function. We denote this density by $f$. That is,
$$f(t) := \frac{d}{dt}m_\W(\mathcal R^{-1}(0,t))$$
where $\mathcal R$ is the first return map to $\W.$
Hence, the proportion of gaps of saddle connections in an interval $I$ of an $m_\W$-generic translation surface $\omega\in\W$ is given by
$$\lim_{N\to\infty}\frac{1}{N}|\mathcal G^N _V(\Lambda_\omega)\cap I|=\int_I f(t) \,dt.$$
\end{proof}

We state more precisely the conclusions of the above theorems now as well. Recall, that $F\sim G$ means that, for large enough $t$, the ratio is a positive constant, $F\ll G$ means there is a positive constant $c$ so that  for large enough $t$, $|F(t)|\le c|G(t)|$,  and that $F\asymp G$ indicates that, for large enough $t$, the ratio is bounded between two positive constants.

\begin{thm}\label{Estimates}\begin{enumerate}

\item  For any affine lattice in $ \Omega$ that is generic for $m_{\Omega}$, we have that the proportion of gaps larger than $t$,
$$\int_t ^\infty g(t)dt$$
satisfies
$$t^{-2}\ll\int_t ^\infty g(t)dt\ll t^{-1}.$$
Here $g(t)$ is the density function for affine lattices from the proof of Theorem \ref{ThmGapTori}.

\item There is support at zero for any affine lattice in $ \Omega$ that is generic for $m_{\Omega}$. That is, for any $\varepsilon>0$,
$$ \int_0 ^\varepsilon g(t)dt>0.$$

\item For  any affine lattice in $ \Omega$ that is generic for a torsion measure $m_q$, we have that the proportion of gaps larger than $t$ has,
$$\int_t ^\infty g_q(t)dt\asymp t^{-2}.$$
Here $g_q(t)$ is the density function for affine lattices from the proof of Theorem \ref{ThmGapTori}.

\item For any doubled slit torus in $\W$ that is generic with respect to the $m_\W$, we have  that the proportion of gaps of saddle connections larger than $t$ decays quadratically. More precisely,
$$\int_t ^\infty f(t)dt\sim t^{-2}.$$
Here $f(t)$ is the density function for doubled slit tori from the proof of Theorem \ref{ThmGapSlit}.

\item There is support at zero for any doubled slit torus in $ \W$ that is generic with respect to the $m_\W$. That is, for any $\varepsilon>0$,
$$ \int_0 ^\varepsilon f(t)dt>0.$$
\end{enumerate}
\end{thm}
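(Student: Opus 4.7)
All five estimates derive from the identity
$$\int_t^\infty g(s)\,ds \;=\; m_\Omega\{R>t\},\qquad \int_0^\e g(s)\,ds \;=\; m_\Omega\{R<\e\},$$
together with the analogous identities for $g_q$ and $f$. Hence each claim reduces to a Lebesgue-measure computation on the transversal, using the explicit piecewise-algebraic expression for the return time on the atoms $\Omega_1,\ldots,\Omega_4,\Omega_{VL}$ of Theorem \ref{ReturnTimeMap} and on $\W_{sl},\W_{sa}$ of Theorem \ref{ReturnTimeW}. On each atom the strategy is the same: invert the inequality $R>t$ (or $R<\e$) in whichever coordinate makes $R$ monotone, and then integrate the remaining three coordinates against the $2\,ds\,db\,da\,d\a$ volume form.

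For the support-at-zero statements (parts 2 and 5), I will use $\Omega_1$, on which $R(a,b,s,\a) = sa/(\a-a)$. For any $\e>0$ the slab $\{(a,b,s,\a): a<\a,\; b\in(1-a,1],\; 0<s<\e(\a-a)/a\}$ is contained in $\Omega_1$ whenever $\e b\a<1$ (so that the upper bound sits below the $\Omega_1$-ceiling $(\a-a)/(ab\a)$), and evidently has positive four-dimensional Lebesgue measure. This gives $m_\Omega\{R<\e\}>0$, proving part 2; pulling this region back through the four-to-one cover $\Pi$ to $\W_{sa}$ gives part 5.

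For the quadratic tail bounds (the lower bound in part 1 and all of parts 3 and 4), on $\Omega_1$ the superlevel set $\{R>t\}$ contains the slab $\{s>t(\a-a)/a\}\cap \Omega_1$, whose $s$-length is $(\a-a)(1/(b\a)-t)^+/a$; integrating $db\,da\,d\a$ over the region $b\a<1/t$ produces a contribution of order $t^{-2}$, giving the lower bound in part 1. For the torsion measure $m_q$ of part 3 the transversal collapses to the two-dimensional slice $\{(p_{a,b},(a/q,0)^T):(a,b)\in\Delta\}$; for $q\ge 2$ this lies in $\Omega_3$, where the return time simplifies to $a^{-1}/(b+a/q)$, whose super-level set in $\Delta$ has area $\asymp t^{-2}$, providing matching bounds. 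Part 4 proceeds by the same template after splitting $m_\W=m_{sa}+m_{sl}$: the $\W_{sa}$ contribution is handled exactly as on $\Omega_1$, while on $\W_{sl}$ the return time is $\min(v_2/v_1,(ab)^{-1})$, and its super-level set has measure $\asymp t^{-2}$ via the constraint $ab<1/t$ combined with the $(v_1,v_2)$-integral over the fundamental domain $\R^2/p_{a,b}\Z^2$.

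The main obstacle is the upper bound $\int_t^\infty g \ll t^{-1}$ of part 1. The difficulty is concentrated on the atoms $\Omega_2$ and $\Omega_4$, where the formula for $R$ contains the floor $j=\lfloor (1+a-\a)/b\rfloor$, which blows up as $b\to 0$. Using $j\le(1+a-\a)/b\le 2/b$ and the fact that $\a-a+jb\ge jb$ on $\Omega_2$ (respectively $\ge(j-1)b$ on $\Omega_4$), one obtains the uniform pointwise estimate
$$R(a,b,s,\a)\;\le\; \frac{1}{ab}+\frac{1}{b^{2}}$$
on these atoms, so $\{R>t\}\subseteq\{ab<2/t\}\cup\{b<\sqrt{2/t}\}$; multiplying by the $s$-range $1/(b\a)$ on $\Omega_2$ (and its analog on $\Omega_4$) and carrying out the remaining integrations in $a$, $b$, $\a$ yields a contribution of order $t^{-1}$. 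Combined with the already-$t^{-2}$ contributions from $\Omega_1,\Omega_3,\Omega_{VL}$, this gives the desired $O(1/t)$ bound; the delicate step is the $b\to 0, a\to 1$ corner, where the two terms $1/(ab)$ and $1/b^2$ compete and one must track the integration region carefully.
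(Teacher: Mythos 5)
Your overall strategy is the same as the paper's: reduce each part to a Lebesgue-measure estimate on the transversal using the explicit piecewise return-time formulas, then bound the super-level sets $\{R>t\}$ (respectively sub-level sets $\{R<\e\}$) piece by piece. For parts 1, 2, 4 and 5 the reductions you sketch are sound, and in places you take slightly cleaner routes than the paper: for part 2 you exhibit positive measure directly in $\Omega_1$ rather than in $\Omega_3$ as the paper does, and your pointwise bound $R\le 1/(ab)+1/b^2$ on $\Omega_2,\Omega_4$ plays the same role as the paper's $R<2/(ab(1-b))$, namely to kill the floor-function dependence before integrating.

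There is, however, a genuine gap in your treatment of part 3. You assert that the support of the torsion measure $m_q$, namely the slice $\{(a,b,0,a/q):(a,b)\in\Delta\}$, lies in $\Omega_3$ whenever $q\ge 2$. That is false: having $s=0$ and $\a=a/q<a$ puts you in $\Omega_3\cup\Omega_4$, and the two atoms are separated by the line $b=1-a/q$ inside $\Delta$. The region $b+a/q\ge 1$ (the paper's $C_2$) is two-dimensional and carries positive $m_q$-mass, and there the return time is $\frac{ja^{-1}}{(a/q)-a+jb}$ with $j=\lfloor(1+a(1-1/q))/b\rfloor$. The floor function reappears, so the closed form $a^{-1}/(b+a/q)$ you invoke is simply not the right formula on that piece, and the matching upper and lower $t^{-2}$ bounds require a separate argument (the paper bounds $j$ away and then integrates against the surviving hyperbolas). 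Without handling the $C_2$ contribution you only get the lower bound $m_q\{R>t\}\gg t^{-2}$ and an upper bound on the $C_1$ piece, not the claimed $\asymp t^{-2}$. Two smaller points worth tightening: your lower bound $\a-a+jb\ge(j-1)b$ on $\Omega_4$ is vacuous when $j=1$, so you need to treat the $b>1/2$, $j=1$ strip of $\Omega_4$ separately; and on $\W_{sl}$ the return time is the piecewise function $v_2/v_1$ when $b+v_1\le1$ and $(ab)^{-1}$ when $b+v_1>1$, which is not the same as $\min(v_2/v_1,(ab)^{-1})$ on the second region — the minimization there is against a translated slope, not $v_2/v_1$ itself, so the $(v_1,v_2)$-integral must be set up from the piecewise description.
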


\begin{remark}
While the statements in part 1 of Theorem \ref{Estimates}  are for gaps of affine lattices that are already in the transversal $\Omega$ and generic for $m_\Omega$, we can obtain information about the gaps of an affine lattice $\Lambda$ that is generic for $m_{AX_2}$. Let $\Lambda'\in\Omega$ be such that $h_u\Lambda = \Lambda'$ and $u$ is the smallest time for which this occurs. Then,
$$\lim_{R\to\infty}\frac{| \mathcal G^R _V(\Lambda )\cap (t,\infty)|}{N(R)}= \lim_{R\to\infty}\frac{| \mathcal G^R _V(\Lambda' )\cap (t,\infty)|}{N(R)}=\int_t ^\infty g(t)dt.$$
Similar arguments can be used for the rest of Theorem \ref{Estimates} to turn statements about the transversal to statements about the ambient space.
\end{remark}

\subsection{$d$-Symmetric Torus Covers}
We can apply our results of doubled slit tori to a more general class of translation surfaces. Let $d>1$ and consider $d$-copies of a torus $X$ that we label $X_1,\ldots,X_d$. Now glue the copies along a slit and identify opposite sides of the slits according to the permutation $(1,2,\ldots, d)$. We assume that the slit has irrational slope. Following \cite{MR2180242} we call a surface constructed in this way a \emph{$d$-symmetric torus cover}. Notice that a doubled slit torus simply corresponds to this construction with $d=2$. 

Any $d$-symmetric torus cover has two cone points of angle $2d\pi$ and has genus $d$.  Let $\E^d$ denote the class $d$-symmetric torus covers. We have $\E^d\subset \mathcal H(d-1,d-1)$. See Figure \ref{fig: dSlitTorus} for an example of a $d$-symmetric torus cover.

\begin{figure}[H]
	\centering
    \includegraphics[width=4.5in]{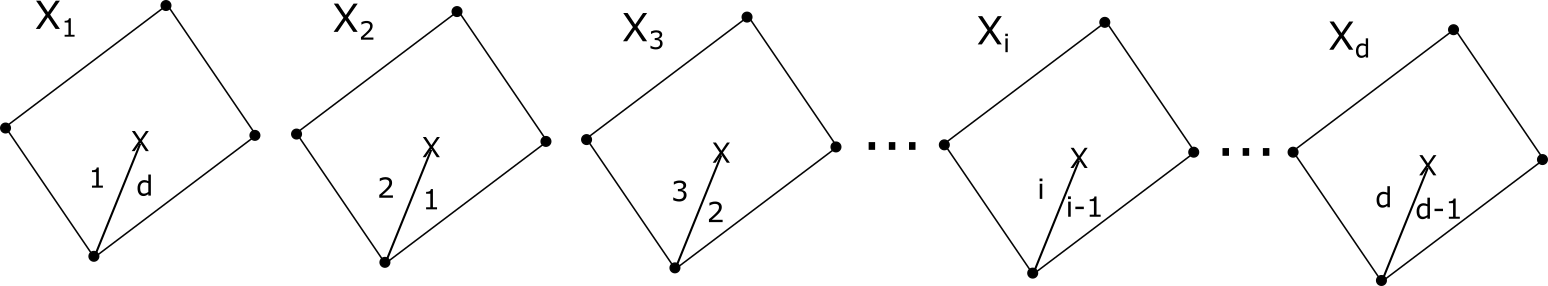}
    \caption{An example of a $d$-symmetric torus cover.} 
    \label{fig: dSlitTorus}
\end{figure}

We remark that we can do a similar construction with any cycle $\sigma\in S^d$ (cycle so that the resulting surface is connected). By relabeling the $d$-copies there is no loss in generality in assuming $(1,2,\ldots, d)$.

The results for doubled slit tori allow us to compute the gap distribution of a typical element in $\E^d$. In particular this yields an explicit gap distribution for a translation surface in any genus $d>1$.

\begin{cor}\label{dSlitTorus}
For $d>1$, the gap distribution of a $d$-symmetric torus cover can be computed using the density function for doubled slit tori. That is,
$$\lim_{R\to\infty}\frac{| \mathcal G^R (\Lambda_\omega) \cap I|}{N(\omega, R)}=\int_I f(x)\,dx$$
for almost every $\omega\in\E^d$ with respect to an  $SL_2(\R)$-invariant probability measure on $\E^d$ that is in the same measure class as Lebesgue measure. Here $f(t)$ is the density function for doubled slit tori from the proof of Theorem \ref{ThmGapSlit}.
\end{cor}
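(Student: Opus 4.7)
The plan is to reduce the corollary to Theorem~\ref{ThmGapSlit} by showing that a $d$-symmetric torus cover and the doubled slit torus built from the same twice-marked torus data $(g,v)$ share the same set of saddle-connection holonomy vectors, hence the same slope set in the vertical strip $V$, hence the same gap distribution. I would first define $\Pi^d : \E^d \to \mathcal{H}(0,0)$ in complete analogy with $\Pi : \E \to \mathcal{H}(0,0)$ from Section~2.4, forgetting all but one torus copy together with the slit data; this is again a degree-four map, parameterized by the four corners of the fundamental domain of $\C/g\Z^2$ spanned by the columns of $g$.

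For $\omega^d = \omega^d_{(g,v)} \in \E^d$ and its counterpart $\omega = \omega_{(g,v)} \in \E$, both surfaces are branched covers of the twice-marked torus $(g,v) \in \mathcal{H}(0,0)$, fully ramified of order $d$ (respectively $2$) over each of the two marked points $P_0 = 0$ and $P_1 = v$. I would then establish the set identity
$$\Lambda_{\omega^d} = \Lambda_{\omega} = g\Z^2_{\mathrm{prim}} \cup (g\Z^2+v) \cup (-g\Z^2-v)$$
by a two-way correspondence: any saddle connection $\tilde\gamma$ upstairs projects to a geodesic $\gamma$ between marked points of $(g,v)$ whose interior avoids the marked points (since their preimages are exactly the cone points, which cannot lie in the interior of $\tilde\gamma$); conversely, every such saddle connection $\gamma$ downstairs admits $d$ (respectively $2$) lifts upstairs corresponding to the sheets meeting at the starting cone point, all sharing the single holonomy vector of $\gamma$. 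The standing irrational-slope assumption on $v$ (Section~3) ensures that the three listed families exhaust the possibilities. Consequently, even though $\omega^d$ and $\omega$ generally carry different numbers of saddle connections, their holonomy sets, slope sets $\mathcal{S}^R$, counts $N(\,\cdot\,,R)$, and normalized slope-gap sets $\mathcal{G}^R$ agree for every $R>0$.

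To globalize the conclusion, I would argue that any $SL_2(\R)$-invariant probability measure on $\E^d$ in the Lebesgue class pushes forward under $\Pi^d$ to an $SL_2(\R)$-invariant Lebesgue-class measure on $\mathcal{H}(0,0)$, because $\Pi^d$ is a finite covering at the level of strata. Hence $\Pi^d$-almost every $\omega^d \in \E^d$ lies over a twice-marked torus $(g,v)$ whose doubled slit counterpart is $m_{\mathcal{W}}$-generic, and Theorem~\ref{ThmGapSlit}, combined with the identification of slope-gap sets established above, yields the density $f$ for $\omega^d$. The main obstacle I expect is the careful verification of the holonomy-set identity: one must rule out the possibility that the $d$-fold cyclic gluing introduces genuinely new holonomies coming from saddle connections that wind through several sheets before returning to a cone point, and simultaneously confirm that each vector in the three candidate families is realized by some saddle connection upstairs. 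The irrational slope of $v$ is precisely what prevents accidental alignments of cone points along geodesic segments, which is what makes both checks go through uniformly in $d$.
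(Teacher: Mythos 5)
Your proposal is correct and follows the same route as the paper's proof: project to a twice-marked torus, rebuild a doubled slit torus, identify the holonomy sets, and transfer the gap statistics. You fill in substantially more detail than the paper's three-line argument (notably the branched-cover justification of $\Lambda_{\omega^d}=\Lambda_\omega$ and the measure push-forward under $\Pi^d$), but the underlying idea is identical.
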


\begin{proof}
Consider a typical element $\omega_1\in\E^d$. We obtain a twice marked torus $(g,v)\in\mathcal H(0,0)$ by projecting to a single copy and forgetting the slit. Use this twice marked torus to construct a doubled slit torus $\omega_2$. Each has the same set of saddle connections ($\Lambda_{\omega_1} = \Lambda_{\omega_2})$ so that their respective gap distributions are the same as well.
\end{proof}

\appendix
\section{Measure computations and estimates}
In the appendix we compute an explicit limiting gap distribution  for slopes of saddle connections on a doubled slit torus. We also prove estimates on the decay of gaps for affine lattices.

\subsection{Haar measure computations and estimates for $\W$}
In this section we explicitly calculate the gap distribution function given by 
$$m_{\W}(\omega\in \W| \mathcal R(\omega)>t)$$
where $t>0$. As a corollary we compute the decay rate of the tail and show there is support at zero.

We compute the distribution by considering the partition on $\W$ given by surfaces with projection a short affine lattice $\W_{sa}$ and surfaces with projection a short lattice $\W_{sl}$. On either part of the partition, we will have the return time map $\mathcal R$ explicitly. The basic idea of proof will be to use the condition of there being a gap larger than $t$ (i.e that the return time $\mathcal R$ is larger than $t$) and see what conditions are imposed on the coordinates where the return time is defined. This will yield explicit integrals whence we can compute the integrals and make arguments with Taylor series to show their decay. 

\subsubsection{Haar measure computations and estimates for $\W_{sa}$}

We compute the contribution on the doubled slit tori coming from a short affine piece. That is,
$$m_{\mathcal W}(\{\omega\in \W_{sa}| \mathcal R(\omega)>t\}) .$$

The return time map $\mathcal R$ is a piece-wise map on three pieces depending on $\Pi(\omega)$. In each piece, we know explicitly the return map and so we can compute the contribution.

 \textbf{Contribution when $\Pi(\omega)\in \Omega_1.$}

Suppose $\Pi(\omega)\in \Omega_1.$ Notice that $\Omega_1$ can be written as
$$\Omega_1 = \left\{(a,b,s,\a)\in \Omega: b\in(0,1], \a\in(1-b,1], a\in(1-b,\a], s\in\left[0,\frac{\a-a}{ab\a}\right)\right\}$$
We have,
$$m_{\W}(\{\omega\in \W_{sa}| \mathcal R(\omega)>t, \Pi(\omega)\in \Omega_1\}) = 4 m_{\Omega}(\{(a,b,s,\a)\in\Omega_1:R(a,b,s,\a)>t\}) .$$
Thus, we now compute $m_{\Omega}(\Omega_1\cap\{R>t\}) .$

The condition on the return time says
 $$t<R(a,b,s,\alpha) = \frac{sa}{\a-a}\iff \frac{t(\a-a)}{a}<s.$$
Notice that this is valid lower bound for $s$ since $s$ is non-negative. On $\Omega_1$, $s$ has a upper bound of $\frac{\a-a}{ab\a}$ so in order for the return map $R$ to be larger than $t$ we must also satisfy  the cross-term condition $ \frac{t(\a-a)}{a}<s< \frac{(\a-a)}{ab\a}\iff t<\frac{1}{b\a}$. We rewrite this as $\a<\frac{1}{bt}$. Hence, we  all we need to understand is how the hyperbola $\a=1/bt$ intersects the $(b,\a)$-triangle  $\{(b,\a): b\in(0,1], \a\in(1-b,1]\} )$. See Figure \ref{fig: ContributionOmega1} for an illustration for the $(b,a)$ slice of the contribution of $\Omega_1$ when $t>4$.\begin{figure}[H]
	\centering
    \includegraphics[width=4in]{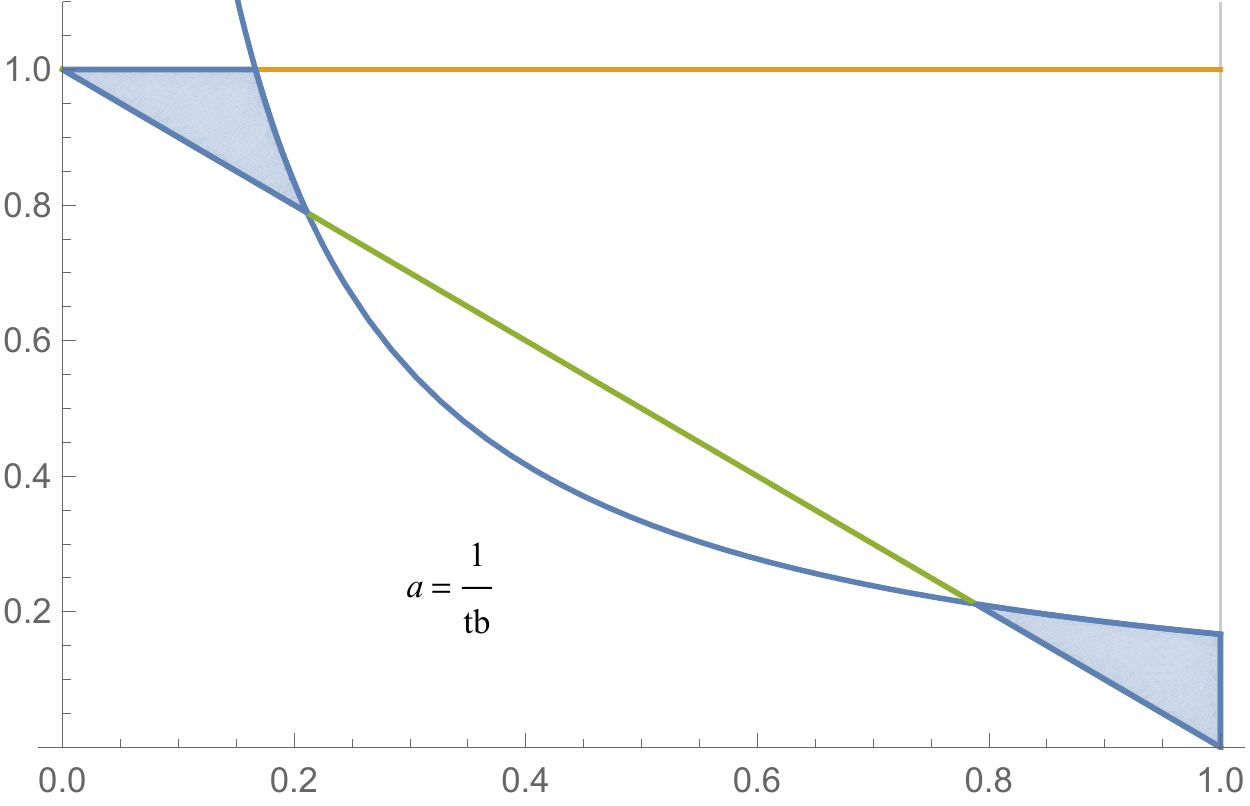}
    \caption{The contribution on $\Omega_1$ when $t>4$.} 
    \label{fig: ContributionOmega1}
\end{figure}

If $t\in(0,1)$, then everything lies below the hyperbola $\a=\frac{1}{bt}$ and our contribution is given by 
$$m_{\Omega}(\{R>t\}\cap \Omega_1)=\int _{b=0} ^{1} \int_{\a=1-b} ^{1} \int_{a=1-b} ^ \a \int _{s=\frac{t(\a-a)}{a}} ^{\frac{\a-a}{ab\a}} 1\,ds\,da\,d\a\,db$$
If $t\in[1,4)$, then 
\begin{align*}
m_{\Omega}(\{R>t\}\cap \Omega_1)&= \int _{b=0} ^{1/t} \int_{\a=1-b} ^{1} \int_{a=1-b} ^ \a \int _{s=\frac{t(\a-a)}{a}} ^{\frac{\a-a}{ab\a}} 1\,ds\,da\,d\a\,db\\
&+ \int _{b=1/t} ^{1} \int_{\a=1-b} ^{1/bt} \int_{a=1-b} ^ \a \int _{s=\frac{t(\a-a)}{a}} ^{\frac{\a-a}{ab\a}} 1\,ds\,da\,d\a\,db.
\end{align*}
If $t\in[4,\infty)$, then
\begin{align*}
m_{\Omega}(\{R>t\}\cap \Omega_1)& =\int_{b=0}^{1/t}\int_{\a=1-b}^{1}\int_{a=1-b}^{\a}\int _{s=\frac{t(\a-a)}{a}} ^{\frac{\a-a}{ab\a}}1\,ds\,da\,d\a\,db  \\
&+ \int _{b=1/t} ^{\frac{1-\sqrt{1-(4/t)}}{2}} \int_{\a=1-b} ^{1/bt} \int_{a=1-b} ^ \a \int _{s=\frac{t(\a-a)}{a}} ^{\frac{\a-a}{ab\a}} 1\,ds\,da\,d\a\,db\\
&+ \int _{\frac{1+\sqrt{1-(4/t)}}{2}}^1 \int_{\a=1-b} ^{1/bt} \int_{a=1-b} ^ \a \int _{s=\frac{t(\a-a)}{a}} ^{\frac{\a-a}{ab\a}} 1\,ds\,da\,d\a\,db.
\end{align*}
By evaluating each integral and then approximating the result by its Taylor series, we can understand how it decays. As an example, we do this for the first integral.
\begin{align*}
&\int_{b=0}^{1/t}\int_{\a=1-b}^{1}\int_{a=1-b}^{\a}\int _{s=\frac{t(\a-a)}{a}} ^{\frac{\a-a}{ab\a}}1\,ds\,da\,d\a \,db\\
&= \int_{b=0}^{1/t}\frac{ b(b (2 + b) t-8) + 2 ( b (2 + t)-4) \log(1 - b)}{4b}\,db\\
&\approx \int_{b=0}^{1/t}\frac{ b( b (2 + b) t-8) + 2 ( b (2 + t)-4)(-b -\frac{ b^2}{2} -\frac{ b^3}{3})}{4b}\,db \\
&=- \frac{1}{6}  \int_{b=0}^{1/t}b^2 ( b (2 + t)-1)\,db = -\frac{1}{12 t^4} + \frac{1}{72 t^3}
\end{align*}
Hence, on this piece we have cubic decay. Doing this for the rest of the integrals we have
\begin{align*}
& \int_{b=0}^{1/t}\int_{\a=1-b}^{1}\int_{a=1-b}^{\a}\int _{s=\frac{t(\a-a)}{a}} ^{\frac{\a-a}{ab\a}}1\,ds\,da\,d\a \,db\sim t^{-3},  \\
& \int _{b=1/t} ^{\frac{1-\sqrt{1-(4/t)}}{2}} \int_{\a=1-b} ^{1/bt} \int_{a=1-b} ^ \a \int _{s=\frac{t(\a-a)}{a}} ^{\frac{\a-a}{ab\a}} 1\,ds\,da\,d\a \,db\sim t^{-5},\\
& \int _{\frac{1+\sqrt{1-(4/t)}}{2}}^1 \int_{\a=1-b} ^{1/bt} \int_{a=1-b} ^ \a \int _{s=\frac{t(\a-a)}{a}} ^{\frac{\a-a}{ab\a}} 1\,ds\,da\,d\a \,db\sim t^{-2}.
\end{align*}
Hence, the decay rate on the tail of $\Omega_1$ is quadratic. 
By multiplying each integral above by 4 we have that contribution of 
$$m_{\mathcal W}(\{\omega\in \W_{sa}| \mathcal R(\omega)>t,\Pi(\omega)\in\Omega_1\}) $$
and also the decay rate
$$m_{\mathcal W}(\{\omega\in \W_{sa}| \mathcal R(\omega)>t,\Pi(\omega)\in\Omega_1\}) \sim t^{-2}.$$
 \textbf{Contribution when $\Pi(\omega)\in \Omega_3.$}

Suppose $\Pi(\omega)\in \Omega_3.$ Notice that $\Omega_3$ can be written as
$$\Omega_3 = \left\{(a,b,s,\a)\in \Omega: b\in(0,1], \a\in(0,1-b], a\in(1-b,1], s\in\left[0,\frac{1}{ab}\right)\right\}.$$
We have,
$$m_{\W}(\{\omega\in \W_{sa}| \mathcal R(\omega)>t, \Pi(\omega)\in \Omega_3\}) = 4 m_{\Omega}(\{(a,b,s,\a)\in\Omega_3:R(a,b,s,\a)>t\}) .$$
Thus, we now compute $m_{\Omega}(\Omega_3\cap\{R>t\}) .$

On $\Omega_3$ we have
$$t<R(a,b,s,\alpha) =\frac{a^{-1}-sb}{b+\a}\iff \frac{a^{-1}-t(b+\a)}{b}>s.$$
Notice that the left-hand side is always bounded above by $(ab)^{-1}.$ In order for there to be any valid values we need the left-hand side to be positive which is equivalent to
$$a<\frac{1}{t(b+\a)}.$$
Hence, we seek the minimum between $\frac{1}{t(b+\a)}$ and $a$. 
\begin{itemize}
\item\textbf{ Case I}  $a<\min\left\{\frac{1}{t(b+\a)},1\right\} =1$. This happens when
$$\frac{1}{t(b+\a)}>1 \iff \a<\frac{1}{t}-b.$$
Notice now that $\a<1-b$ on $\Omega_3$  and that 
$$\a<\min\{1/t-b,1-b\}=\begin{cases}
1-b, &\text{ if } t<1\\
  1/t-b, &\text{ if } t>1.
\end{cases}$$
Hence, if $t<1$, the integral is
$$\int_{b=0} ^{1} \int_{\a=0} ^{1-b}\int_{a=1-b} ^1 \int _{s=0} ^{\frac{a^{-1}-t(b+\a)}{b}}1dsdad\a db = \frac{\pi^2}{6} -1 - \frac{ t}{3}.$$
On the other hand, if $t>1$, then $\a<1/t-b$ and since $\a>0$, we must have $1/t>b$. Thus, for large  $t$ (larger than 1), the integral is
$$\int_{b=0} ^{1/t} \int_{\a=0} ^{1/t-b}\int_{a=1-b} ^1 \int _{s=0} ^{\frac{a^{-1}-t(b+\a)}{b}}1\,ds\,da\,d\a \,db.$$
A Taylor series argument shows that this decays quadratically.

\item \textbf{ Case II} $a<\min\left\{\frac{1}{t(b+\a)},1\right\} = \frac{1}{t(b+\a)}$. This happens when
$$\frac{1}{t(b+\a)}<1 \iff \a>\frac{1}{t}-b.$$ 
Since $1-b<\a$, then combining the inequalities we obtain 
$$\a<\frac{b^2-b+1/t}{1-b}.$$

By understanding how these functions intersects the $(b,\a)$-plane $$\{(b,\a)|0<b\le1\text{ and }0<\a<1-b\},$$
then we can understand how this function grows in $t$.

We obtain the following:
If $t\in (0,1)$, there is no contribution.

If $t\in[1,2)$, the contribution is 
\begin{align*}
m_{\Omega}(\{R>t\}\cap \Omega_3)=&\int_{b=0} ^{1-1/t} \int_{\a=1/t - b} ^{\frac{b^2-b+1/t}{1-b}} \int_{a=1-b} ^{1/(t(b+\a))} \int _{s=0} ^{\frac{a^{-1}-t(b+\a)}{b}} 1\,ds\,da\,d\a\,db\\
&\int_{b=1-1/t} ^{1/t} \int_{\a=1/t - b} ^{1-b} \int_{a=1-b} ^{1/(t(b+\a))} \int _{s=0} ^{\frac{a^{-1}-t(b+\a)}{b}} 1\,ds\,da\,d\a\,db\\
&\int_{b=1/t} ^{1} \int_{\a=0} ^{1-b} \int_{a=1-b} ^{1/(t(b+\a))} \int _{s=0} ^{\frac{a^{-1}-t(b+\a)}{b}} 1\,ds\,da\,d\a\,db.
\end{align*}
If $t\in[2,4)$, the contribution is
\begin{align*}
m_{\Omega}(\{R>t\}\cap \Omega_3)=&\int_{b=0} ^{1/t} \int_{\a=1/t - b} ^{\frac{b^2-b+1/t}{1-b}} \int_{a=1-b} ^{1/(t(b+\a))} \int _{s=0} ^{\frac{a^{-1}-t(b+\a)}{b}} 1\,ds\,da\,d\a\,db\\
&\int_{b=1/t} ^{1-1/t} \int_{\a=0} ^{\frac{b^2-b+1/t}{1-b}} \int_{a=1-b} ^{1/(t(b+\a))} \int _{s=0} ^{\frac{a^{-1}-t(b+\a)}{b}} 1\,ds\,da\,d\a\,db\\
&\int_{b=1-1/t} ^{1} \int_{\a=0} ^{1-b} \int_{a=1-b} ^{1/(t(b+\a))} \int _{s=0} ^{\frac{a^{-1}-t(b+\a)}{b}} 1\,ds\,da\,d\a\,db.
\end{align*} 
If $t>4$ the tail on this piece of $\Omega_3$ is given by
\begin{align*}
m_{\Omega}(\{R>t\}\cap \Omega_3)=&\int_{b=0} ^{1/t}\int_{\a=t^{-1}-b} ^{\frac{b^2-b+1/t}{1-b}}\int_{a=1-b} ^{1/t(b+\a)}\int_{s=0} ^{\frac{a^{-1}-t(b+\a)}{b}}1\,ds\,da\,d\a \,db \\
&+ \int_{b=1/t} ^{\frac{1-\sqrt{1-4/t}}{2}}\int_{\a=0} ^{\frac{b^2-b+1/t}{1-b}}\int_{a=1-b} ^{1/t(b+\a)}\int_{s=0} ^{\frac{a^{-1}-t(b+\a)}{b}}1\,ds\,da\,d\a\, db\\
&+\int_{b=\frac{1+\sqrt{1-4/t}}{2}} ^{1-1/t} \int_{\a=0} ^\frac{b^2-b+1/t}{1-b}\int_{a=1-b} ^{1/t(b+\a)}\int_{s=0} ^{\frac{a^{-1}-t(b+\a)}{b}}1\,ds\,da\,d\a \,db\\
&+\int_{b=1-1/t} ^{1} \int_{\a=0} ^{1-b} \int_{a=1-b} ^{1/t(b+\a)}\int_{s=0} ^{\frac{a^{-1}-t(b+\a)}{b}}1\,ds\,da\,d\a \,db
\end{align*}
By evaluating each integral and then approximating by its Taylor series, we see that 
\begin{align*}
&\int_{b=0} ^{1/t}\int_{\a=t^{-1}-b} ^{\frac{b^2-b+1/t}{1-b}}\int_{a=1-b} ^{1/t(b+\a)}\int_{s=0} ^{\frac{a^{-1}-t(b+\a)}{b}}1\,ds\,da\,d\a \,db \sim t^{-4},\\
& \int_{b=1/t} ^{\frac{1-\sqrt{1-4/t}}{2}}\int_{\a=0} ^{\frac{b^2-b+1/t}{1-b}}\int_{a=1-b} ^{1/t(b+\a)}\int_{s=0} ^{\frac{a^{-1}-t(b+\a)}{b}}1\,ds\,da\,d\a\, db\sim t^{-5},\\
&\int_{b=\frac{1+\sqrt{1-4/t}}{2}} ^{1-1/t} \int_{\a=0} ^\frac{b^2-b+1/t}{1-b}\int_{a=1-b} ^{1/t(b+\a)}\int_{s=0} ^{\frac{a^{-1}-t(b+\a)}{b}}1\,ds\,da\,d\a \,db\sim t^{-5},\\
&\int_{b=1-1/t} ^{1} \int_{\a=0} ^{1-b} \int_{a=1-b} ^{1/t(b+\a)}\int_{s=0} ^{\frac{a^{-1}-t(b+\a)}{b}}1\,ds\,da\,d\a \,db\sim t^{-2}
\end{align*}
\end{itemize}
By adding the contributions from both cases, we conclude that
$$m_{\Omega}(\{R>t\}\cap \Omega_3)\sim t^{-2}.$$
By multiplying each integral above by 4 we have that contribution of 
$$m_{\mathcal W}(\{\omega\in \W_{sa}| \mathcal R(\omega)>t,\Pi(\omega)\in\Omega_3\}) $$
and also the decay rate
$$m_{\mathcal W}(\{\omega\in \W_{sa}| \mathcal R(\omega)>t,\Pi(\omega)\in\Omega_3\}) \sim t^{-2}.$$
\textbf{Contribution when $\Pi(\omega)\in \Omega_2 \cup \Omega_4.$}

 Suppose $\Pi(\omega)\in \Omega_2$.   Notice that $\Omega_2$ can be written as
$$\Omega_2=\left\{b\in(0,1],\a\in(1-b,1],a\in(1-b,\a),s\in\left[(ab)^{-1}\left(\frac{\a-a}{\a}\right),(ab)^{-1}\right)\right\}.$$
We have,
$$m_{\W}(\{\omega\in \W_{sa}| \mathcal R(\omega)>t, \Pi(\omega)\in \Omega_2\}) = 4 m_{\Omega}(\{(a,b,s,\a)\in\Omega_2: (ab)^{-1}-s>t\}) .$$

Unpacking a little we see
$$ (ab)^{-1}-s>t \iff  (ab)^{-1}-t>s$$
and this is always the minimum upper bound for $s$ in $\Omega_2$ since $t>0$. On the other hand, to satisfy the cross term condition, we need
$$(ab)^{-1}\left(\frac{\a-a}{\a}\right)<(ab)^{-1}-t \iff\a<1/bt.$$
Hence, we want to understand how the hyperbola $\a=1/bt$ intersects the triangle $$\left\{(b,\a)|b\in(0,1],\a\in(1-b,1]\right\}.$$ 

If $t\in[0,1)$, then everything lies below the hyperbola $\a=\frac{1}{bt}$ and the contribution is
$$m_{\Omega}(\{\mathcal R>t\}\cap\Omega_2)=\int _{b=0} ^{1} \int_{\a=1-b} ^{1} \int_{a=1-b} ^ \a \int _{s=(ab)^{-1}\left(\frac{\a-a}{\a}\right)} ^{(ab)^{-1}-t} 1\,ds\,da\,d\a\,db$$

If $t\in[1,4)$, then  the hyperbola $\a=\frac{1}{bt}$ intersects once at $b=1/t$ and the contribution is
\begin{align*}
m_{\Omega}(\{\mathcal R>t\}\cap\Omega_2)&= \int _{b=0} ^{1/t} \int_{\a=1-b} ^{1} \int_{a=1-b} ^ \a \int _{s=(ab)^{-1}\left(\frac{\a-a}{\a}\right)} ^{(ab)^{-1}-t} 1\,ds\,da\,d\a\,db \\
&+ \int _{b=1/t} ^{1} \int_{\a=1-b} ^{1/bt} \int_{a=1-b} ^ \a \int _{s=(ab)^{-1}\left(\frac{\a-a}{\a}\right)} ^{(ab)^{-1}-t} 1\,ds\,da\,d\a \,db.
\end{align*}
If $t\in[4,\infty)$, then  the hyperbola $\a=\frac{1}{bt}$  intersects twice and the contribution is
\begin{align*}
m_{\Omega}(\{\mathcal R>t\}\cap\Omega_2)& =\int_{b=0}^{1/t}\int_{\a=1-b}^{1}\int_{a=1-b}^{\a}\int _{s=(ab)^{-1}\left(\frac{\a-a}{\a}\right)} ^{(ab)^{-1}-t} 1\,ds\,da\,d\a \,db  \\
&+ \int _{b=1/t} ^{\frac{1-\sqrt{1-(4/t)}}{2}} \int_{\a=1-b} ^{1/bt} \int_{a=1-b} ^ \a \int _{s=(ab)^{-1}\left(\frac{\a-a}{\a}\right)} ^{(ab)^{-1}-t} 1\,ds\,da\,d\a \,db\\
&+ \int _{\frac{1+\sqrt{1-(4/t)}}{2}}^1 \int_{\a=1-b} ^{1/bt} \int_{a=1-b} ^ \a \int _{s=(ab)^{-1}\left(\frac{\a-a}{\a}\right)} ^{(ab)^{-1}-t} 1\,ds\,da\,d\a \,db.
\end{align*}
By evaluating each integral and approximating the result with a Taylor series, we see that
\begin{align*}
& \int_{b=0}^{1/t}\int_{\a=1-b}^{1}\int_{a=1-b}^{\a}\int _{s=(ab)^{-1}\left(\frac{\a-a}{\a}\right)} ^{(ab)^{-1}-t} 1\,ds\,da\,d\a \,db\sim t^{-2},  \\
&\int _{b=1/t} ^{\frac{1-\sqrt{1-(4/t)}}{2}} \int_{\a=1-b} ^{1/bt} \int_{a=1-b} ^ \a \int _{s=(ab)^{-1}\left(\frac{\a-a}{\a}\right)} ^{(ab)^{-1}-t} 1\,ds\,da\,d\a \,db\sim t^{-4},\\
& \int _{\frac{1+\sqrt{1-(4/t)}}{2}}^1 \int_{\a=1-b} ^{1/bt} \int_{a=1-b} ^ \a \int _{s=(ab)^{-1}\left(\frac{\a-a}{\a}\right)} ^{(ab)^{-1}-t} 1\,ds\,da\,d\a \,db\sim t^{-2}.
\end{align*}

In total, we have a quadratic tail when $\Pi(\omega\in\Omega_2)$
$$m_{\Omega}(\{\mathcal R>t\}\cap\Omega_2)\sim t^{-2}.$$

Now suppose $\Pi(\omega)\in \Omega_4.$ Notice that $\Omega_4$ can be written as
$$\Omega_4=\left\{b\in(0,1],a\in(1-b,1],\a\in(1-b,a),s\in[0,(ab)^{-1})\right\}.$$
We have,
$$m_{\W}(\{\omega\in \W_{sa}|\mathcal  R(\omega)>t, \Pi(\omega)\in \Omega_4\}) = 4 m_{\Omega}(\{(a,b,s,\a)\in\Omega_4: (ab)^{-1}-s>t\}) $$

The condition on the return time says
$$ (ab)^{-1}-s>t \iff  (ab)^{-1}-t>s$$
and this is always the minimum upper bound for $s$ in $\Omega_4$ since $t>0$. On the other hand, to satisfy the cross term condition, we need
$$0<(ab)^{-1}-t \iff a<1/bt.$$
Hence, we are interested in the hyperbola $a =1/bt$ and how it intersects the triangle  $\left\{b\in(0,1],a\in(1-b,1]\right\}.$

If $t\in[0,1)$, then everything lies below the hyperbola $\a=\frac{1}{bt}$ and the contribution is
$$m_{\Omega}(\{\mathcal R>t\}\cap\Omega_4)=\int _{b=0} ^{1} \int_{a=1-b} ^{1} \int_{\a=1-b} ^ a \int _{s=0} ^{(ab)^{-1}-t} 1\,ds\,da\,d\a\,db$$

If $t\in[1,4)$, then  the hyperbola $\a=\frac{1}{bt}$ intersects once at $b=1/t$ and the integral is
\begin{align*}
m_{\Omega}(\{\mathcal R>t\}\cap\Omega_4)&= \int _{b=0} ^{1/t} \int_{a=1-b} ^{1} \int_{\a=1-b} ^ a \int _{s=0} ^{(ab)^{-1}-t} 1\,ds\,da\,d\a\,db \\
&+ \int _{b=1/t} ^{1} \int_{a=1-b} ^{1/bt} \int_{\a=1-b} ^ a \int _{s=0} ^{(ab)^{-1}-t} 1\,ds\,da\,d\a\,db .
\end{align*}

If $t\in[4,\infty)$, then the hyperbola $\a=\frac{1}{bt}$  intersects twice and the integral is
\begin{align*}
m_{\Omega}(\{\mathcal R>t\}\cap\Omega_4)& =\int_{b=0}^{1/t}\int_{a=1-b}^{1} \int_{\a=1-b} ^ a \int _{s=0} ^{(ab)^{-1}-t} 1\,ds\,da\,d\a\,db   \\
&+ \int _{b=1/t} ^{\frac{1-\sqrt{1-(4/t)}}{2}} \int_{a=1-b} ^{1/bt} \int_{\a=1-b} ^ a \int _{s=0} ^{(ab)^{-1}-t} 1\,ds\,da\,d\a\,db \\
&+ \int _{\frac{1+\sqrt{1-(4/t)}}{2}}^1 \int_{a=1-b} ^{1/bt} \int_{\a=1-b} ^ a \int _{s=0} ^{(ab)^{-1}-t} 1\,ds\,da\,d\a\,db .
\end{align*}
By evaluating each integral and approximating the result with a Taylor series, we see that
\begin{align*}
& \int_{b=0}^{1/t}\int_{a=1-b}^{1} \int_{\a=1-b} ^ a \int _{s=0} ^{(ab)^{-1}-t} 1\,ds\,da\,d\a\,db   \sim t^{-2}\\
& \int _{b=1/t} ^{\frac{1-\sqrt{1-(4/t)}}{2}} \int_{a=1-b} ^{1/bt} \int_{\a=1-b} ^ a \int _{s=0} ^{(ab)^{-1}-t} 1\,ds\,da\,d\a\,db \sim t^{-4}\\
&\int _{\frac{1+\sqrt{1-(4/t)}}{2}}^1 \int_{a=1-b} ^{1/bt} \int_{\a=1-b} ^ a \int _{s=0} ^{(ab)^{-1}-t} 1\,ds\,da\,d\a\,db \sim t^{-2}.
\end{align*}

$$m_{\Omega}(\{\mathcal R>t\}\cap\Omega_4)\sim t^{-2}.$$

\textbf{Tail contribution of $\W_{sa} $}

Hence, combining all the above contributions we have that the tail on $\W_{sa}$ has quadratic decay,
$$m_{\W}(\{\omega\in \W_{sa}:\mathcal R(\omega)>t\})\sim t^{-2}.$$

\subsubsection{Haar measure computations and estimates for $\W_{sl}$}

We now compute the contribution on the doubled slit tori coming from a short lattice piece. That is,
$$m_{\mathcal W}(\{\omega\in \W_{sl}| \mathcal R(\omega)>t\}) .$$
for  $\omega=\omega_{(a,b,v_1,v_2)}\in\W_{sl}$,  we have a piece-wise map on two pieces depending on the length of $b+v_1$. In each piece, we know explicitly the return map and thus can compute 
$$m_{\W}(\omega\in \W_{sl}|\mathcal  R(\omega)>t)= 4 \iint\limits_{(a,b)\in\Delta}\int_{v\in \R^2/p_{a,b}\Z^2}\chi_{\{ \mathcal R>t\}}(a,b,v)\,dv \,db \,da.$$

It will be useful to parameterize $\R^2/p_{a,b}\Z^2$ as the set
$$\{(v_1,v_2)=(ax+by,a^{-1}y)|(a,b)\in\Delta,(x,y)\in[0,1)^2\}.$$ 
The condition that determines what the return map is depends on whether $b+v_1$ is larger than or smaller than 1. In the coordinates $(v_1,v_2)=(ax+by,a^{-1}y)$ we have
$$b+v_1 >1\iff x\ge \frac{1-b}{a}-\frac{b}{a}y=:l(y).$$
Hence, there is natural interest in the behavior of $l$. Notice
\begin{itemize}
\item $l(0)=\frac{1-b}{a} $ is always less than one for any $(a,b)\in \Delta.$
\item and $l(1)<0\iff b>1/2$. 
\end{itemize}

Hence we can represent the two cases of our return map through the $(y,x)$ slice as in Figure \ref{fig:2Cases}.
\begin{figure}[H]
\centering
\begin{subfigure}{.5\textwidth}
  \centering
  \includegraphics[width=.6\linewidth]{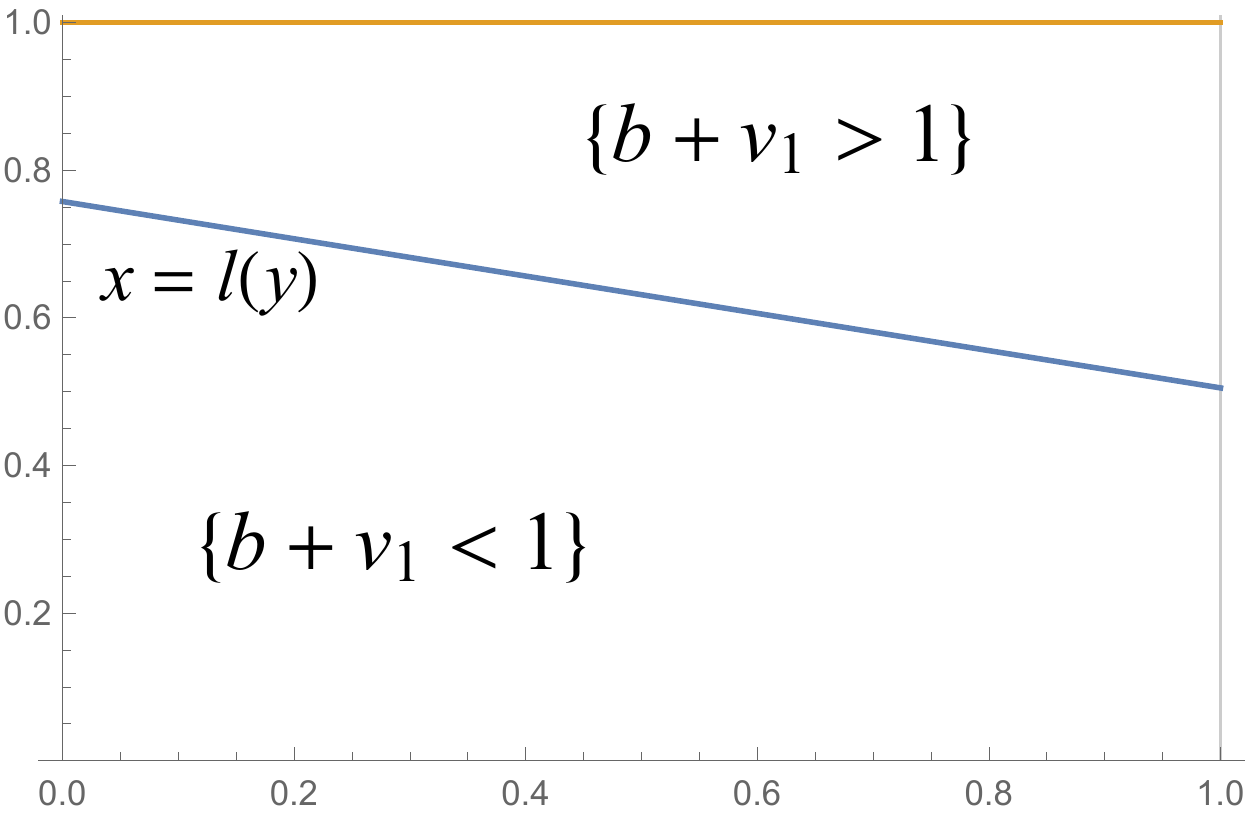}
  \caption{A generic picture when $b<\frac{1}{2}$}
  \label{fig:sub1}
\end{subfigure}%
\begin{subfigure}{.5\textwidth}
  \centering
  \includegraphics[width=.6\linewidth]{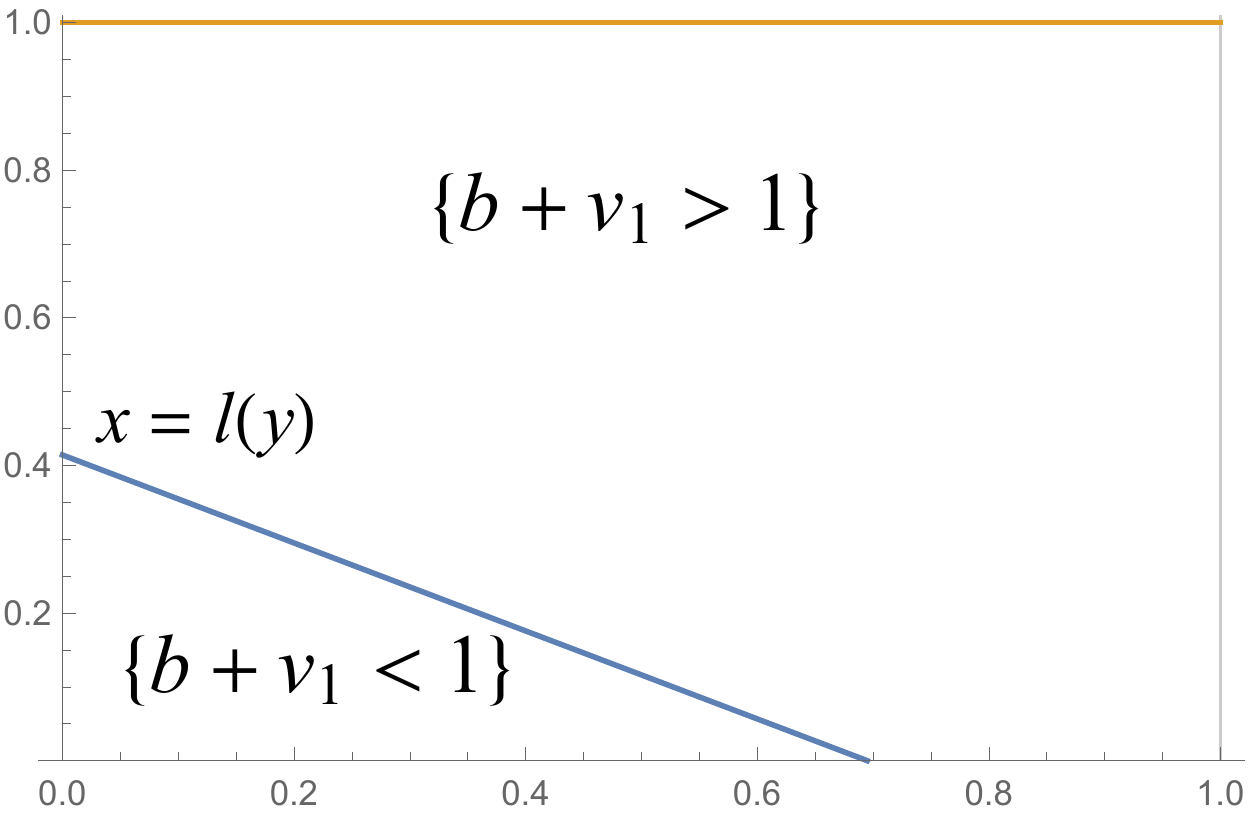}
  \caption{A generic picture when $b>\frac{1}{2}$}
  \label{fig:sub2}
\end{subfigure}
\caption{A generic picture of the two cases that arise for the return map}
\label{fig:2Cases}
\end{figure}

\textbf{ (Case 1: $b+v_1 >1$)} 

Notice that $l(1)<0\iff b>1/2$.  Moreover, $l$ hits the $y$-axis at $y=b^{-1}-1$
As indicated at the start of this subsection we break our computation up into the case where $b>1/2$ and $b<1/2$. Notice that $l$ hits the $y$-axis at $y=b^{-1}-1$.

\textbf{ (Case 1A: $b+v_1 >1$ and $b<1/2$)} 

If $b<1/2$, then our region $b+v_1>1$ can be described as
$$\{(b,a,x,y)|b\in(0,1/2],a\in(1-b,1),y\in[0,1),x\in[l(y),1)\}.$$

See Figure \ref{fig:2Cases} (A).

On this region, 
$$\mathcal R(\omega)=(ab)^{-1}>t\iff a<\frac{1}{bt}.$$ 
Hence, all we have to do is understand how this hyperbola intersects the triangle $\{(b,a)\in \Delta:b\in(1,1/2),a\in(1-b,1)\}$.

If $t\in[0,2)$, the everything lies below the hyperbola and the contribution is given by
$$ \int_{b=0} ^{1/2} \int_{a=1-b} ^1 \int _{y=0} ^{1} \int_{x=l(y)} ^1 1\,dx\,dy\,da\,db.$$

If $t\in[2,4)$, then the contribution is given by
$$\int_{b=0} ^{1/t} \int_{a=1-b} ^1  \int _{y=0} ^{1} \int_{x=l(y)} ^1 1\,dx\,dy\,da\,db 
+\int_{b=1/t} ^{1/2} \int_{a=1-b} ^{1/bt}  \int _{y=0} ^{1} \int_{x=l(y)} ^1 1\,dx\,dy\,da\,db$$

If $t\in[4,\infty)$, then the contribution is given by
$$\int_{b=0} ^{1/t} \int_{a=1-b} ^{1}  \int _{y=0} ^{1} \int_{x=l(y)} ^1 1\,dx\,dy\,da\,db +\int_{b=1/t} ^{\frac{1-\sqrt{1-(4/t)}}{2}} \int_{a=1-b} ^{1/bt}  \int _{y=0} ^{1} \int_{x=l(y)} ^1 1\,dx\,dy\,da\,db
$$

By evaluating each integral and approximating the result with a Taylor series we see that
\begin{align*}
&\int_{b=0} ^{1/t} \int_{a=1-b} ^{1}  \int _{y=0} ^{1} \int_{x=l(y)} ^1 1\,dx\,dy\,da\,db \sim t^{-3},\\
& \int_{b=1/t} ^{\frac{1-\sqrt{1-(4/t)}}{2}} \int_{a=1-b} ^{1/bt}  \int _{y=0} ^{1} \int_{x=l(y)} ^1 1\,dx\,dy\,da\,db\sim t^{-4}
\end{align*}
Hence, the tail on this region is less than quadratic. More precisely, we have
$$m_{\W}\left(\{\mathcal R>t\}\cap\{b+v_1>1\}\cap\{b<1/2\}\right) \sim t^{-3}.$$

\textbf{ (Case 1B: $b+v_1 >1$ and $b>1/2$)} 

If $b>1/2$, then our region is the union of
$$\{(b,a,x,y)|b\in(1/2,1),a\in(1-b,1),y\in[0,b^{-1}-1),x\in[l(y),1)\}$$
with
$$ \{(b,a,x,y)|b\in(1/2,1),a\in(1-b,1),y\in[b^{-1}-1,1),x\in[0,1)\}.$$
See Figure \ref{fig:2Cases} (B). 

Now we can implement the return time condition. On this region, 
$$R(\omega)=(ab)^{-1}>t\iff (bt)^{-1}>a.$$ 
Hence, all we have to do is understand how this hyperbola intersects the triangle $\{(b,a)\in\Delta|b\in(1/2,1),a\in(1-b,1)\}$.

If $t\in(0,1)$, the everything lies below the hyperbola and the integral is 
$$\int_{b=1/2} ^1 \int_{a=1-b} ^1 \left[\int _{y=0} ^{b^{-1}-1} \int_{x=l(y)} ^1 1\,dx\,dy + \int _{y=b^{-1}-1} ^{1} \int_{x=0} ^1 1\,dx\,dy     \right] \,da\,db$$
If $t\in[1,2)$, then the integral is 
\begin{align*}
&\int_{b=1/2} ^{1/t} \int_{a=1-b} ^1 \left[\int _{y=0} ^{b^{-1}-1} \int_{x=l(y)} ^1 1\,dx\,dy + \int _{y=b^{-1}-1} ^{1} \int_{x=0} ^1 1\,dx\,dy     \right] \,da\,db \\
&+\int_{b=1/t} ^{1} \int_{a=1-b} ^{1/bt} \left[\int _{y=0} ^{b^{-1}-1} \int_{x=l(y)} ^1 1\,dx\,dy + \int _{y=b^{-1}-1} ^{1} \int_{x=0} ^1 1\,dx\,dy     \right] \,da\,db 
\end{align*}
If $t\in[2,4)$, then the integral is 
$$\int_{b=1/2} ^{1} \int_{a=1-b} ^{1/bt} \left[\int _{y=0} ^{b^{-1}-1} \int_{x=l(y)} ^1 1\,dx\,dy + \int _{y=b^{-1}-1} ^{1} \int_{x=0} ^1 1\,dx\,dy     \right] \,da\,db $$
If $t\in[4,\infty)$, then the integral is 
$$\int_{b=\frac{1+\sqrt{1-(4/t)}}{2}} ^{1} \int_{a=1-b} ^{1/bt} \left[\int _{y=0} ^{b^{-1}-1} \int_{x=l(y)} ^1 1\,dx\,dy + \int _{y=b^{-1}-1} ^{1} \int_{x=0} ^1 1\,dx\,dy     \right] \,da\,db $$
By evaluating this integral and approximating the result with a Taylor series we see that
$$\int_{b=\frac{1+\sqrt{1-(4/t)}}{2}} ^{1} \int_{a=1-b} ^{1/bt} \left[\int _{y=0} ^{b^{-1}-1} \int_{x=l(y)} ^1 1\,dx\,dy + \int _{y=b^{-1}-1} ^{1} \int_{x=0} ^1 1\,dx\,dy     \right] \,da\,db \sim t^{-2}.$$

Hence,
$$m_{\W}\left(\{\mathcal R>t\}\cap\{b+v_1>1\}\cap\{b>1/2\}\right) $$
decays quadratically.

Combining this with the part where $b<1/2$, we see that

$$m_{\W}\left(\{\mathcal R>t\}\cap\{b+v_1>1\}\right) \sim t^{-2}.$$

\textbf{ (Case 2: $b+v_1 <1$)} 

We are interested in the situation underneath $l(y)$ in Figure \ref{fig:2Cases}. Now we impose the return time condition. On this region (i.e. when $b+v_1\le 1$), we have
$$R(\omega) = \frac{v_2}{v_1}>t\iff x \le \left(\frac{1}{a^2t}-\frac{b}{a}\right)y=:L(y).$$
Notice that $L$ is a line through the origin and since it is an upper bound for $x$, then we are only concerned when it has positive slope. That is, when
$$\frac{1}{a^2t}-\frac{b}{a}>0\iff \frac{1}{bt}>a.$$

Lastly, we notice to parameterize the $(y,x)$-plane, we need to understand how the two lines $l$ and $L$ intersect each other. Since they are each lines, it suffices to understand how the interact when $y=1.$ We obtain 3 cases:
\begin{enumerate}
\item $b>1/2$ and $L(1)\ge l(1)$
\item $b<1/2$ and $L(1)\ge l(1)$ 
\item $b<1/2$ and $L(1)< l(1)$
\end{enumerate}
We do not have the case $b>1/2$ and $L(1)< l(1)$ because when $b>1/2$ then we have $l(1)$ is negative which forces $L$ to be negative which is not possible since it is an upper bound for $x$. Lastly, we note that $L$ and $l$ intersect at the point $y=(1-b)at.$ We illustrate the three cases in Figure \ref{fig:3Cases}.

\begin{figure}[ht!]
    \centering
    \begin{subfigure}{.5\linewidth}
        \includegraphics[scale=0.35]{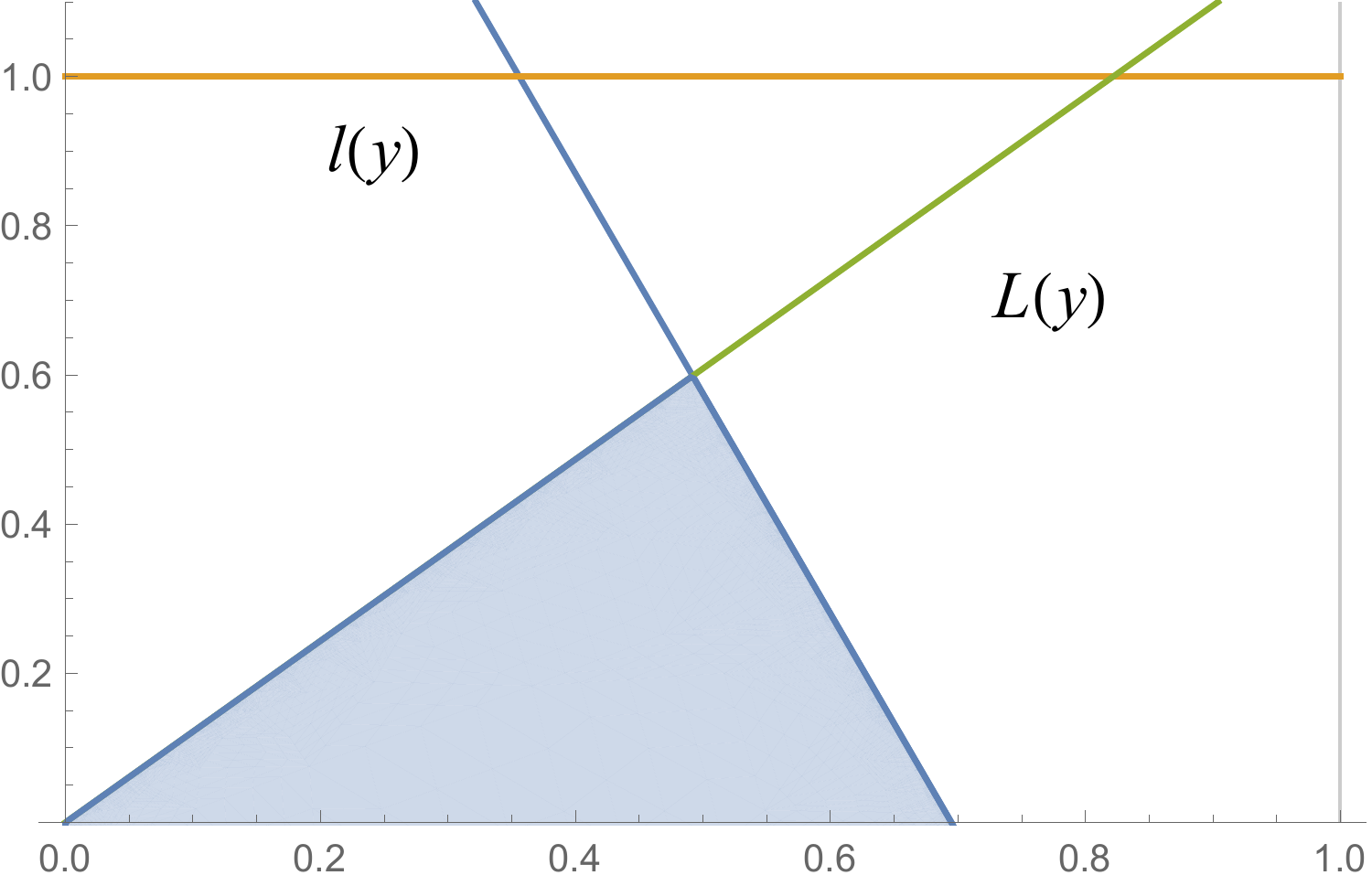}
        \caption{$b>1/2$ and $L(1)\ge l(1)$}
    \end{subfigure}
    \hskip2em
    \begin{subfigure}{.5\linewidth}
        \includegraphics[scale=0.35]{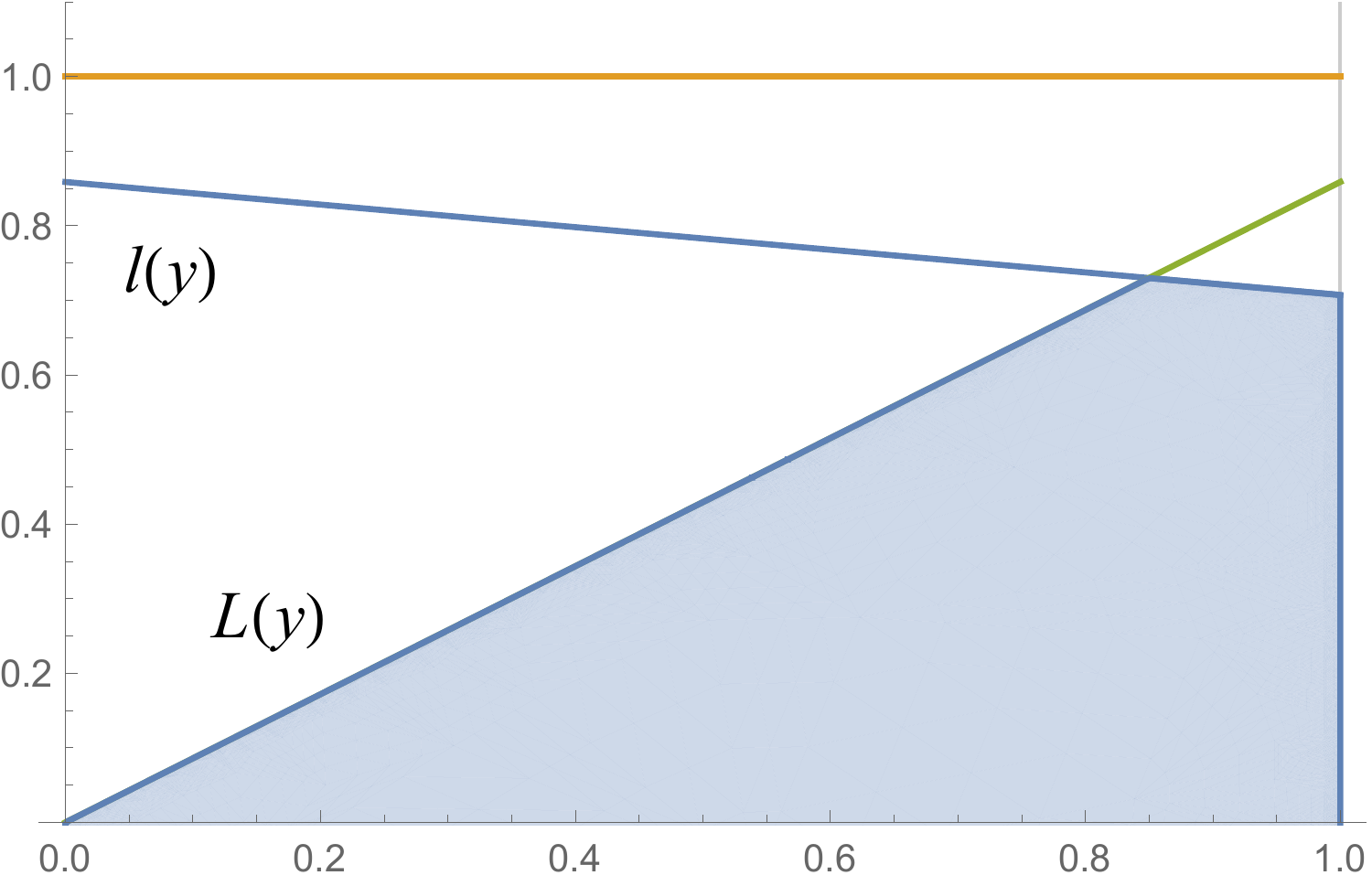}
        \caption{$b<1/2$ and $L(1)\ge l(1)$ }
    \end{subfigure}
    \begin{subfigure}{.5\linewidth}
        \includegraphics[scale=0.35]{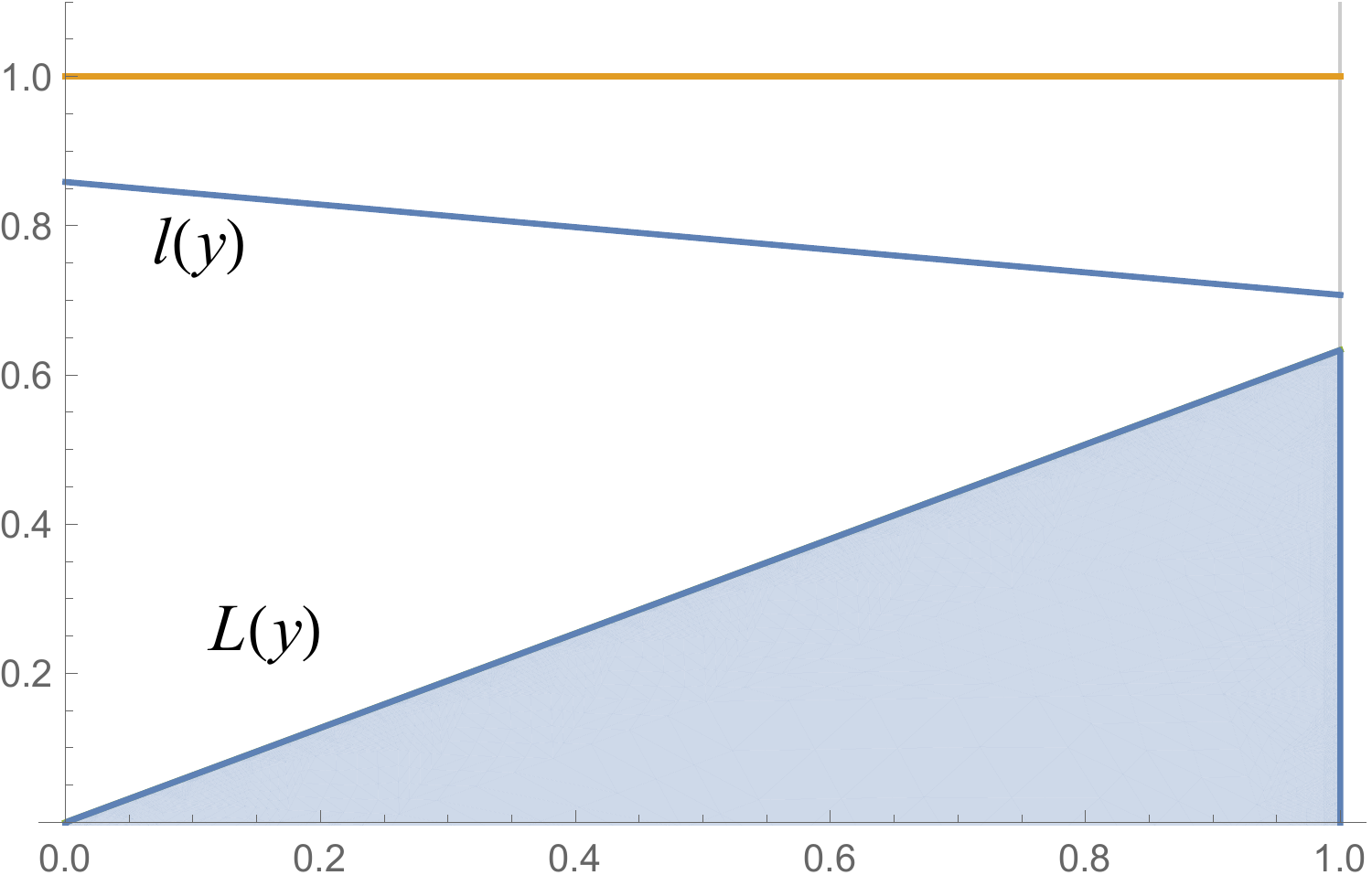}
        \caption{$b<1/2$ and $L(1)< l(1)$}
    \end{subfigure}
\caption{A generic picture of the three cases that arise for the return map}
\label{fig:3Cases}
\end{figure}

We now compute the contribution on each of the above cases.

\textbf{ (Case 2A: $b+v_1 <1$, $b>1/2$, and $L(1)\ge l(1)$)} 

If $b>1/2$ and $L(1)\ge l(1)$, then the region is the union of
$$\{(b,a,y,x): b\in(1/2,1], a\in(1-b,1],y\in(0,(1-b)at],x\in(0,L(y))\}$$
and 
$$\{(b,a,y,x): b\in(1/2,1], a\in(1-b,1],y\in((1-b)at,b^{-1}-1],x\in(0,l(y))\}.$$

The condition $L(1)\ge l(1)$ is equivalent to $a\le\frac{1}{t(1-b)}$. Combining this with the condition from the return map means that we are interested in 
$$a\le\min\left\{\frac{1}{bt},\frac{1}{t(1-b)}\right\}.$$
However,
$$\frac{1}{tb}<\frac{1}{t(1-b)}\iff1/2<b$$
so that actually we only care about how the hyperbola $a=\frac{1}{tb}$ intersects the region $\{(b,a):b\in(1/2,1],a\in(1-b,1]\}.$

If $t\in[0,1)$, then everything is under the hyperbola and the integral is
$$\int_{b=1/2} ^{1}\int_{a=1-b}^{1} \left[  \int_{y=0} ^{(1-b)at} \int_{x=0} ^{L(y)} 1\,dx\,dy +    \int_{y=(1-b)at} ^{b^{-1}-1} \int_{x=0} ^{l(y)} 1\,dx\,dy               \right]\,da\,db.$$

If $t\in[1,2)$, then the integral is
\begin{align*}
&\int_{b=1/2} ^{1/t}\int_{a=1-b}^{1} \left[  \int_{y=0} ^{(1-b)at} \int_{x=0} ^{L(y)} 1\,dx\,dy +    \int_{y=(1-b)at} ^{b^{-1}-1} \int_{x=0} ^{l(y)} 1\,dx\,dy               \right]\,da\,db\\
&+\int_{b=1/t} ^{1}\int_{a=1-b}^{1/bt} \left[  \int_{y=0} ^{(1-b)at} \int_{x=0} ^{L(y)} 1\,dx\,dy +    \int_{y=(1-b)at} ^{b^{-1}-1} \int_{x=0} ^{l(y)} 1\,dx\,dy               \right]\,da\,db
\end{align*}

If $t\in[2,4)$, then the integral is
\begin{align*}
\int_{b=1/2} ^{1}\int_{a=1-b}^{1/bt} \left[  \int_{y=0} ^{(1-b)at} \int_{x=0} ^{L(y)} 1\,dx\,dy +    \int_{y=(1-b)at} ^{b^{-1}-1} \int_{x=0} ^{l(y)} 1\,dx\,dy               \right]\,da\,db
\end{align*}

If $t\in[4,\infty)$, then the integral is
\begin{align*}
\int_{b=\frac{1+\sqrt{1-(4/t)}}{2}} ^{1}\int_{a=1-b}^{1/bt} \left[  \int_{y=0} ^{(1-b)at} \int_{x=0} ^{L(y)} 1\,dx\,dy +    \int_{y=(1-b)at} ^{b^{-1}-1} \int_{x=0} ^{l(y)} 1\,dx\,dy               \right]\,da\,db
\end{align*}

By evaluating the above integral and approximating it with its Taylor series we see that the decay is cubic. That is,
$$m_{\W}\left(\{\mathcal R>t\}\cap\{b+v_1<1\}\cap\{b>1/2\}  \cap\{L(1)\ge l(1)\}  \right) \sim t^{-3}.$$

\textbf{ (Case 2B: $b+v_1 <1$,  $b<1/2$, and $L(1)\ge l(1)$ )} 

 If $b<1/2$ and $L(1)\ge l(1)$, then the region is the union of
$$\{(b,a,y,x): b\in(0,1/2], a\in(1-b,1],y\in(0,(1-b)at],x\in(0,L(y))\}$$
and 
$$\{(b,a,y,x): b\in(0,1/2], a\in(1-b,1],y\in((1-b)at,1],x\in(0,l(y))\}.$$

As before, the condition $L(1)\ge l(1)$ is equivalent to $a\le\frac{1}{t(1-b)}$. Combining this with the condition from the return map means that we are interested in 
$$a\le\min\left\{\frac{1}{bt},\frac{1}{t(1-b)}\right\}.$$
We have,
$$\frac{1}{tb}>\frac{1}{t(1-b)}\iff1/2>b.$$
Consequently, we are interested in how the hyperbola $a=\frac{1}{t(1-b)}$ intersects the triangle $\{(b,a):b\in(0,1/2],a\in(1-b,1]\}$/

If $t\in[0,1)$, then everything is under the hyperbola and the integral is 
$$\int_{b=0} ^{1/2}\int_{a=1-b}^{1} \left[  \int_{y=0} ^{(1-b)at} \int_{x=0} ^{L(y)} 1\,dx\,dy +    \int_{y=(1-b)at} ^{1} \int_{x=0} ^{l(y)} 1\,dx\,dy               \right]\,da\,db.$$

If $t\in[1,2)$, then the integral is
\begin{align*}
&\int_{b=1-\sqrt{t^{-1}}} ^{1-t^{-1}}\int_{a=1-b}^{\frac{1}{t(1-b)}} \left[  \int_{y=0} ^{(1-b)at} \int_{x=0} ^{L(y)} 1\,dx\,dy +    \int_{y=(1-b)at} ^{1} \int_{x=0} ^{l(y)} 1\,dx\,dy               \right]\,da\,db\\
&+\int_{b=1-t^{-1}} ^{1/2}\int_{a=1-b}^{1} \left[  \int_{y=0} ^{(1-b)at} \int_{x=0} ^{L(y)} 1\,dx\,dy +    \int_{y=(1-b)at} ^{1} \int_{x=0} ^{l(y)} 1\,dx\,dy               \right]\,da\,db
\end{align*}

If $t\in[2,4)$, then the integral is
\begin{align*}
\int_{b=1-\sqrt{t^{-1}}} ^{1/2}\int_{a=1-b}^{\frac{1}{t(1-b)}} \left[  \int_{y=0} ^{(1-b)at} \int_{x=0} ^{L(y)} 1\,dx\,dy +    \int_{y=(1-b)at} ^{1} \int_{x=0} ^{l(y)} 1\,dx\,dy               \right]\,da\,db
\end{align*}

If $t\in[4,\infty)$, then the hyperbola passes below the triangle and there is no contribution.

\textbf{ (Case 2C: $b+v_1 <1$,  $b<1/2$, and $L(1)< l(1)$ )} 

 If $b<1/2$ and $L(1)< l(1)$, then the region is 
$$\{(b,a,y,x): b\in(0,1/2], a\in(1-b,1],y\in(0,1],x\in(0,L(y))\}.$$

Using that $L(1)<l(1)$ is equivalent to $a>\frac{1}{t(1-b)}$. we have that we are interested in the condition:
$$\max\left\{1-b,\frac{1}{t(1-b)}\right\}<a<\min\left\{1,\frac{1}{bt}\right\}.$$
 By understanding this region in the $(b,a)$-slice we deduce the following.

If $t\in[0,1)$, the condition that $a>\frac{1}{t(1-b)}$ says there is no contribution here.

If $t\in[1,2)$, the integral is
$$\int_{b=0} ^{1-\sqrt{1/t}}\int_{a=1-b}^{1}   \int_{y=0} ^{1} \int_{x=0} ^{L(y)} 1\,dx\,dy \,da\,db +           \int_{b=1-\sqrt{1/t}} ^{1-(1/t)}\int_{a=\frac{1}{t(1-b)}}^{1}   \int_{y=0} ^{1} \int_{x=0} ^{L(y)} 1\,dx\,dy \,da\,db.$$

If $t\in\left[2,\frac{3+\sqrt{5}}{2}\right)$, the integral is
\begin{align*}
\int_{b=0} ^{1-\frac{1}{\sqrt{t}}}\int_{a=1-b}^{1}   \int_{y=0} ^{1} &\int_{x=0} ^{L(y)} 1\,dx\,dy \,da\,db +  \int_{b=1-\sqrt{1/t}} ^{1/t}\int_{a=\frac{1}{t(1-b)}}^{1/bt}   \int_{y=0} ^{1} \int_{x=0} ^{L(y)} 1\,dx\,dy \,da\,db \\
&+  \int_{b=1-\sqrt{1/t}} ^{1/2} \int_{a=\frac{1}{t(1-b)}}^{1/bt}   \int_{y=0} ^{1} \int_{x=0} ^{L(y)} 1\,dx\,dy \,da\,db
\end{align*}

If $t\in\left[\frac{3+\sqrt{5}}{2},4\right)$, the integral is
\begin{align*}
\int_{b=0} ^{1/t}\int_{a=1-b}^{1}   \int_{y=0} ^{1} &\int_{x=0} ^{L(y)} 1\,dx\,dy \,da\,db +  \int_{b=1/t} ^{1-\sqrt{1/t}}\int_{a=1-b}^{1/bt}   \int_{y=0} ^{1} \int_{x=0} ^{L(y)} 1\,dx\,dy \,da\,db \\
&+  \int_{b=1-\sqrt{1/t}} ^{1/2} \int_{a=\frac{1}{t(1-b)}}^{1/bt}   \int_{y=0} ^{1} \int_{x=0} ^{L(y)} 1\,dx\,dy \,da\,db
\end{align*}

Lastly, if $t\ge4$, the integral is 
$$\int_{b=0} ^{1/t}\int_{a=1-b}^{1}   \int_{y=0} ^{1} \int_{x=0} ^{L(y)} 1\,dx\,dy \,da\,db +           \int_{b=1/t} ^{\frac{1-\sqrt{1-(4/t)}}{2}}\int_{a=1-b}^{\frac{1}{bt}}   \int_{y=0} ^{1} \int_{x=0} ^{L(y)} 1\,dx\,dy \,da\,db.$$
By evaluating the integrals and approximating it with its Taylor series we see that the decay is cubic.

\subsubsection{Proof of Theorem \ref{Estimates} part 4 and 5}
Combining the results of the previous two subsections we see that we have quadratic decay for the slope gap distribution of doubled slit tori and support at zero.

An explicit description of the tail distribution $G$ is given below. Recall, that this is defined as  $G(t)=\frac{1}{6} \left(3+\pi ^2\right)-F(t)$ where $F(t)$ is the cummulative distribution function. 

Piecewise, we have:
$G(t) \cdot \chi_{[0,1]}=\frac{1}{6} \left(3+\pi ^2\right)-\frac{7 t}{8},$
\begin{align*}
G(t) \cdot \chi_{(1,2]}&=\frac{1}{24} \left(24 \left(\text{Li}_2\left(\frac{1}{t}\right)-\text{Li}_2\left(\frac{t-1}{t}\right)\right)-12 \log ^2(t)+24 \log (t-1) \log (t)+12 \log (t)\right)\\
&-\frac{5}{2}+\frac{1}{6 t^2}+\frac{1}{24} t (-24 \log (t-1)+24 \log (t)-4)+\frac{24 \log (t-1)+54 \log (t)+51}{24 t},
\end{align*}
\begin{align*}
G(t) \cdot \chi_{(2,\frac{3+\sqrt{5}}{2}]}&=\frac{1}{48} \left(-48 \left(\text{Li}_2\left(\frac{1}{t}\right)-\text{Li}_2\left(\frac{t-1}{t}\right)\right)-3 \log \left(t^3\right)-24 \log ^2(t)\right)+\frac{1}{2 t^{3/2}}\\
&+\frac{12 \log \left(1-\frac{1}{\sqrt{t}}\right)-36 \log (t-1)+48 \log (t)-12 \log \left(t-\sqrt{t}\right)}{48 t^2}\\
&+\frac{72 \coth ^{-1}(1-2 t)-18}{48 t^2}+\frac{24 \log \left(1-\frac{1}{\sqrt{t}}\right)+24 \log (t)-36}{48 \sqrt{t}}\\
&+\frac{1}{48} \left(-12 \log \left(1-\frac{1}{\sqrt{t}}\right)-3 (7+\log (256)) \log \left(\frac{4}{t}\right)\right)\\
&+\frac{1}{24} (-63-45 \log (2)-\log (8) \log (256))\\
&+\frac{1}{48} t (-48 \log (t-1)+48 \log (t)-16)\\
&+\frac{-12 \log \left(1-\frac{1}{\sqrt{t}}\right)+48 \log (t-1)+96 \log (t)+192}{48 t}\\
&+\frac{1}{48} (48
   \log (t-1) \log (t)+24 (3+\log (2)) \log (t)),
\end{align*}

\begin{align*}
G(t) \cdot \chi_{(\frac{3+\sqrt{5}}{2},4]}&=\frac{1}{48} \left(-48 \left(\text{Li}_2\left(\frac{1}{t}\right)-\text{Li}_2\left(\frac{t-1}{t}\right)\right)-3 \log \left(t^3\right)-24 \log ^2(t)\right)+\\
&\frac{-12 \log (1-t)-24 \log (t-1)+12 \log (-t)+24 \log (t)}{{48 t^2}}\\
&+\frac{72 \coth ^{-1}(1-2 t)-24}{48 t^2}\\
&+\frac{1}{48} \left(-12 \log \left(1-\frac{1}{\sqrt{t}}\right)-144-2 \log (8) (15+\log
   (256))\right)\\
&+\frac{1}{48} \left(12 \log \left(\sqrt{t}-1\right)+3 (7+\log (256)) \log \left(\frac{4}{t}\right)\right)\\
&+\frac{24 \log \left(1-\frac{1}{\sqrt{t}}\right)-24 \log \left(\sqrt{t}-1\right)+12 \log (t)}{48 \sqrt{t}}\\
&+\frac{1}{48} t (-48 \log (t-1)+48 \log (t)-16)+\\
&\frac{-12 \log \left(1-\frac{1}{\sqrt{t}}\right)+12 \log
   \left(\sqrt{t}-1\right)+24 \log (t-1)+24 \log \left(\frac{t-1}{\sqrt{t}}\right)}{48 t}\\
&+\frac{114 \log (t)+198}{48 t}\\
&+\frac{1}{48} (48 \log (t-1) \log (t)+6 (13+\log (16)) \log (t)).
\end{align*}
 
The formula for $G(t)$ when $t>4$ is extremely long so we do not write them down. The main characteristic is that it has quadratic tail decay as shown in Theorem \ref{Estimates} part 4.

\subsection{Haar measure computations and estimates for $\Omega$}

We prove the estimate of Theorem \ref{Estimates} part 1. We also show that there is support at zero for the density function.

\begin{proof} (Of Theorem \ref{Estimates} part 1)

We compute the cumulative distribution function for the gaps of slopes $$m_{\Omega}(\{R>t\}\cap \Omega_i)$$ separately for each $i=1,\ldots,4$ where $\Omega_i$ are the regions where the return map is the same.

We notice that much of these computations were needed in computing the gap distribution on $\W$, specifically the ones for the tail of $\Omega_1$ and $\Omega_3$. As such, we do not reproduce them here.

 For $\Omega_1$ and $\Omega_3$ we can compute exact asymptotics, while we only compute upper and lower bounds for $\Omega_2$ and $\Omega_4$. This is because the return time function includes a floor function in these cases which make explicit computations difficult.

\textbf{(Tail on $\Omega_1$)} 
This computation was implicit in the section of measure estimates on $\W$. It was shown there that for $t>4$, we have

$$m_{\Omega}(\{R>t\}\cap \Omega_1) \sim t^{-2}.$$

 Hence, the decay on $\Omega_1$ is quadratic.


\textbf{(Tail in $\Omega_2$)} 

Due to the floor function in the formula for the return map on $\Omega$ we only compute lower and upper estimates on the decay.
 We first notice that $\Omega_2$ can be written as
$$\Omega_2=\left\{b\in(0,1],a\in(1-b,1],\a\in(a,1),s\in\left[(ab)^{-1}\left(\frac{\a-a}{\a}\right),(ab)^{-1}\right)\right\}.$$

We compute an upper bound for the contribution on $\Omega_2$. Recall that $j = \lfloor\frac{1+a-\a}{b}\rfloor$ is in the formula for the return map on $\Omega_2$. An upper bound is found by
\begin{align*}
R|_{\Omega_2}&=\frac{j(a^{-1}-sb)+sa}{\a-a+jb}<\frac{(\frac{1+a-\a}{b})(a^{-1}-sb)+sa}{\a-a+jb}\\
&=\frac{(1+a-\a)(ab)^{-1}-s(1-\a)}{\a-a+jb}<\frac{2(ab)^{-1}}{\a-a+jb}.
\end{align*}
Using that 
$$\a-a+jb>\a-a+\left(\frac{1+a-\a}{b}-1\right)b=1-b$$
and continuing our estimate from before, we find
$$R|_{\Omega_2}<\frac{2(ab)^{-1}}{\a-a+jb}<\frac{2}{ab(1-b)}=:U.$$

To compute the contribution of $m_{\Omega}(\{t<U\})$, we notice that
$$t<U\iff a<\frac{2}{tb(1-b)}.$$
Thus, we need to understand how the curve $\frac{1}{tb(1-b)}$ intersects the triangle  $\{(b,a): b\in(0,1], a\in(1-b,1]\} )$.
Due to this, the following points will be of interest
$$b_k := \frac{2}{3}\left(\cos\left(\frac{1}{3}\arccos\left(\frac{54}{2t}-1\right)-\frac{2\pi *k}{3}\right)+1\right)$$
where $k=1,2.$ For $t$ large, we note that $b_1$ is close to 1 and $b_2$ is close to 0. These points correspond to where the curve $a=\frac{1}{tb(1-b)}$ intersect the line $a=1-b$.

Hence, 
\begin{align*}
m_{\Omega}(\{t<U\}) &= \int_{b=0} ^{\frac{1-\sqrt{1-(8/t)}}{2}} \int_{a=1-b} ^{1} \int_{\a=a} ^{1} \int _{s=\frac{\a-a}{ab\a}} ^{\frac{1}{ab}}1\,ds\,d\a\,da \,db\\
&+ \int _{b={\frac{1-\sqrt{1-(8/t)}}{2}}} ^{b_2} \int_{a=1-b} ^{\frac{1}{bt(1-b)}} \int_{\a=a} ^ 1  \int _{s=\frac{\a-a}{ab\a}} ^{\frac{1}{ab}}1\,ds\,d\a \,da \,db\\
&+ \int _{b=x_1} ^{\frac{1+\sqrt{1-(8/t)}}{2}} \int_{a=1-b} ^{\frac{1}{bt(1-b)}} \int_{\a=a} ^ 1  \int _{s=\frac{\a-a}{ab\a}} ^{\frac{1}{ab}}1\,ds\,d\a\,da \,db\\
&+ \int _{b=\frac{1+\sqrt{1-(8/t)}}{2}} ^1 \int_{a=1-b} ^{1} \int_{\a=a} ^ 1 \int _{s=\frac{\a-a}{ab\a}} ^{\frac{1}{ab}}1\,ds\,d\a\,da \,db
\end{align*}
 
By evaluating each integral and approximating the result with a Taylor series, we see that
\begin{align*}
& \int_{b=0} ^{\frac{1-\sqrt{1-(8/t)}}{2}} \int_{a=1-b} ^{1} \int_{\a=a} ^{1}  \int _{s=\frac{\a-a}{ab\a}} ^{\frac{1}{ab}}1\,ds\,d\a\,da \,db\sim t^{-2},\\
& \int _{b={\frac{1-\sqrt{1-(8/t)}}{2}}} ^{b_2} \int_{a=1-b} ^{\frac{1}{bt(1-b)}} \int_{\a=a} ^ 1 \int _{s=\frac{\a-a}{ab\a}} ^{\frac{1}{ab}}1\,ds\,d\a\,da \,db\sim t^{-1},\\
& \int _{b=b_1} ^{\frac{1+\sqrt{1-(8/t)}}{2}} \int_{a=1-b} ^{\frac{1}{bt(1-b)}} \int_{\a=a} ^ 1 \int _{s=\frac{\a-a}{ab\a}} ^{\frac{1}{ab}}1\,ds\,d\a\,da \,db\sim t^{-1},\\
& \int _{b=\frac{1+\sqrt{1-(8/t)}}{2}} ^1 \int_{a=1-b} ^{1} \int_{\a=a} ^ 1 \int _{s=\frac{\a-a}{ab\a}} ^{\frac{1}{ab}}1\,ds\,d\a\,da \,db\sim t^{-1}.
\end{align*}

Hence, an upper bound for the decay is
$$m_{\Omega}(\{t<R\}\cap\Omega_2) <m_{\Omega}(\{t<U\}) \sim t^{-1}.$$

We now compute a lower bound for the contribution on $\Omega_2$. Using that the denominator of $R$ is bounded above by 1 and that all the terms in the numerator are positive we have that 
$$R|_{\Omega_2}=\frac{j(a^{-1}-sb)+sa}{\a-a+jb}>\frac{sa}{1}.$$
Finally, using the lower bound of $s$ in $\Omega_2$ yields that
$$R|_{\Omega_2}>\frac{\a-a}{b\a}=:L.$$
To compute the contribution of $m_{\Omega}(\{t<L\})$, we notice that
$$t<L\iff a<\a(1-bt)$$
which indicates interest in how the curve $a=\a(1-bt)$ intersects the  $(\a,a)$ triangle. A computation confirms that we only have points in this triangle whenever $b<1/t$. of those points, we only get contribution whenever $1-tb>1$ which gives the integral
$$\int_{b=0} ^{1/t} \int_{\a=1-b} ^1 \int_{a=1-b} ^\a \int_{s=\frac{\a-a}{ab\a}} ^{(ab)^{-1}} 1\,ds\,da\,d\a \,db.$$
Evaluating and using Taylor series shows that this integral has quadratic decay i.e.
$$\int_{b=0} ^{1/t} \int_{\a=1-b} ^1 \int_{a=1-b} ^\a \int_{s=\frac{\a-a}{ab\a}} ^{(ab)^{-1}} 1\,ds\,da\,d\a \,db\sim t^{-2}.$$
\textbf{ (Tail on $\Omega_3$)}

This computation was implicit in the section of measure estimates on $\W$. It was shown there that for $t>4$, we have

$$m_{\Omega}(\{R>t\}\cap \Omega_3) \sim t^{-2}.$$

 Hence, the decay on $\Omega_3$ is quadratic.

\textbf{(Tail on $\Omega_4$) } 
 We first notice that $\Omega_4$ can be written as
$$\Omega_4 = \left\{b\in(0,1],a\in(1-b,1],\a\in(1-b,a], s\in[0,(ab)^{-1})\right\}.$$ 

Second, we notice that the upper estimate from $\Omega_2$ works. Namely,
$$R|_{\Omega_4}< \frac{2}{ab(1-b)}=:U.$$
Moreover, 
$$t<U\iff a<\frac{2}{tb(1-b)}.$$
Thus, we need to understand how the curve $\frac{1}{tb(1-b)}$ intersects the triangle  $$\{(b,a): b\in(0,1], a\in(1-b,1]\} ).$$
As before, we will be interested in
$$b_k := \frac{2}{3}\left(\cos\left(\frac{1}{3}\arccos\left(\frac{54}{2t}-1\right)-\frac{2\pi *k}{3}\right)+1\right)$$
where $k=1,2.$ For $t$ large, we note that $b_1$ is close to 1 and $b_2$ is close to 0. Lastly, these points correspond to where the curve $a=\frac{1}{tb(1-b)}$ intersect the line $a=1-b$.

An upper bound for the contribution is given by, 
\begin{align*}
m_{\Omega}(\{t<U\}) &= \int_{b=0} ^{\frac{1-\sqrt{1-(8/t)}}{2}} \int_{a=1-b} ^{1} \int_{\a=1-b} ^{a} \int _{s=0} ^{\frac{1}{ab}}1\,ds\,d\a\,da \,db\\
&+ \int _{b={\frac{1-\sqrt{1-(8/t)}}{2}}} ^{b_2} \int_{a=1-b} ^{\frac{1}{bt(1-b)}}  \int_{\a=1-b} ^{a} \int _{s=0} ^{\frac{1}{ab}}1\,ds\,d\a\,da \,db\\
&+ \int _{b=b_1} ^{\frac{1+\sqrt{1-(8/t)}}{2}} \int_{a=1-b} ^{\frac{1}{bt(1-b)}} \int_{\a=1-b} ^{a} \int _{s=0} ^{\frac{1}{ab}}1\,ds\,d\a\,da \,db\\
&+ \int _{b=\frac{1+\sqrt{1-(8/t)}}{2}} ^1 \int_{a=1-b} ^{1}  \int_{\a=1-b} ^{a} \int _{s=0} ^{\frac{1}{ab}}1\,ds\,d\a\,da \,db
\end{align*}

We find the decay of each term to be
\begin{align*}
& \int_{b=0} ^{\frac{1-\sqrt{1-(8/t)}}{2}} \int_{a=1-b} ^{1}  \int_{\a=1-b} ^{a} \int _{s=0} ^{\frac{1}{ab}}1\,ds\,d\a\,da \,db\sim t^{-2},\\
& \int _{b={\frac{1-\sqrt{1-(8/t)}}{2}}} ^{b_2} \int_{a=1-b} ^{\frac{1}{bt(1-b)}} \int_{\a=1-b} ^{a} \int _{s=0} ^{\frac{1}{ab}}1\,ds\,d\a\,da \,db\sim t^{-1},\\
& \int _{b=b_1} ^{\frac{1+\sqrt{1-(8/t)}}{2}} \int_{a=1-b} ^{\frac{1}{bt(1-b)}}  \int_{\a=1-b} ^{a} \int _{s=0} ^{\frac{1}{ab}}1\,ds\,d\a\,da \,db\sim t^{-1},\\
& \int _{b=\frac{1+\sqrt{1-(8/t)}}{2}} ^1 \int_{a=1-b} ^{1}  \int_{\a=1-b} ^{a} \int _{s=0} ^{\frac{1}{ab}}1\,ds\,d\a\,da \,db\sim t^{-1}.
\end{align*}

We now compute a lower bound for the contribution on $\Omega_4$. Using that the denominator of $R$ is bounded above by 1 and that all the terms in the numerator are positive we have that 
$$R|_{\Omega_4}=\frac{j(a^{-1}-sb)+sa}{\a-a+jb}>\frac{sa}{1}=: L.$$
To compute the contribution of $m_{\Omega}(\{L>t\})$, we notice that
$$L>t\iff s>t/a.$$
This is a valid lower bound for $s$ since $t/a$ is always positive. The upper bounds on $s$ yields the condition
$$t/a< (ab)^{-1}\iff b<t^{-1}$$
yielding the integral
$$m_{\Omega}(\{R>t\}\cap \Omega_4)>m_{\Omega}(\{L>t\}) = \int_{b=0} ^{1/t}\int_{\a=1-b} ^1 \int_{a=\a} ^1 \int_{s=t/a} ^{\frac{1}{ab}}1\,ds\,da\,d\a \,db.$$
Evaluating this integral and approximating the result by its Taylor series, we have that 
$$ \int_{b=0} ^{1/t}\int_{\a=1-b} ^1 \int_{a=\a} ^1 \int_{s=t/a} ^{\frac{1}{ab}}1\,ds\,da\,d\a \,db\sim t^{-2}$$
so that a lower bound on the contribution of $\Omega_4$ is given by a quadratic function.
\end{proof}

As a corollary of the measure estimates from above, we obtain that the distribution function for gaps of an affine lattice are supported near 0.
\begin{proof} (Of Theorem \ref{Estimates} part 2)
Note that the volume of $\Omega_3$ is given by $m_{\Omega}(\Omega_3) = \frac{\pi^2}{6}-1$. Notice that in the measure estimate of the tail of $\Omega_3$ in case I yields that
$$m_\Omega\left( \left\{ R>t \right\}\cap\Omega_3\right)\ge\int_{b=0} ^{1} \int_{\a=0} ^{1-b}\int_{a=1-b} ^1 \int _{s=0} ^{\frac{a^{-1}-t(b+\a)}{b}}1dsdad\a db= \frac{\pi^2}{6} -1 - \frac{ t}{3}$$
whenever $t<1.$ We only have an inequality because there may be contribution from the other pieces $\Omega_i$.  Hence,
$$\int_0 ^\varepsilon g(t)dt > m_\Omega\left( \left\{ R<\varepsilon \right\}\cap\Omega_3\right) = \frac{\pi^2}{6}-1 - m_\Omega\left( \left\{ R>\varepsilon \right\}\cap\Omega_3\right)  \ge \varepsilon/3  .$$
This contribution is positive for any $\varepsilon >0$ and so we conclude that there is support at zero.
\end{proof}

\subsection{Torsion measure computations and estimates for $\Omega$}

We compute the tail of the gap distribution with respect to the torsion measures $m_{q}$:
$$m_{q} (R>t).$$
The support of $m_q$  is on the affine lattices of the form
$$\{\Lambda_{(a,b,0,a/q)}:(a,b)\in \Delta\}.$$
Our main theorem is that the decay is at most quadratic.

\begin{proof}(Of Theorem \ref{Estimates} part 3)

The return map falls under 2 situations

$$R(a,b,0,a/q) = \begin{cases}
 \frac{a^{-1}}{b+ (a/q)}, &\text{ if } b+ \frac{a}{q}<1\\
 \frac{ja^{-1}}{(a/q)-a+jb}, &\text{ if } b+ \frac{a}{q}\ge1 \\
\end{cases}.$$

The conditions on the sum reduce the triangle $\Delta$ into 2 regions above and below the line $b=1- \frac{a}{q}$.
Denote by
$$C_1=\{(a,b)\in \Delta|b+ \frac{a}{q}<1 \}$$
and
$$C_2=\{(a,b)\Delta|b+ \frac{a}{q}\ge1 \}$$

We begin with case I:  $b+ \frac{a}{q}<1.$ Notice, if $q=1$, then there are no $b$'s possible and so the contribution here is null. So we assume $q>1$.

Hence,
$$R>t\iff b<\frac{1}{at}-\frac{a}{q}.$$
This hyperbola intersects the bottom boundary line $a=1-b$ when
$$a = \frac{1\pm\sqrt{1-\frac{4}{t} (1-\frac{1}{q})}}{2(1-\frac{1}{q})}.$$
A quick calculation shows that the root with a ``plus" is larger than 1 (and thus falls outside of the range of $a$'s we are interested in) whenever $q<t$ and in order for the roots to be real, we need a positive discriminant. Since we are only interested in the decay of the tail, we can choose $t$ large enough, to guarantee both conditions. Hence, the hyperbola $\frac{1}{at}-\frac{a}{q}$ intersects the bottom boundary line once at $a = \frac{1-\sqrt{1-\frac{4}{t} (1-\frac{1}{q})}}{2(1-\frac{1}{q})}.$

Furthermore, this hyperbola intersects the top  boundary line $b=1-\frac{a}{q}$ when $a = 1/t$.
Hence, the contribution to the tail of the gap distribution is given by

$$m_q\left(\{R>t\}\cap C_1\right)  = \int_{a=0} ^{1/t} \int_{b=1-a} ^{1-\frac{a}{q}}1\,da\,db + \int_{a=1/t} ^{\frac{1-\sqrt{1-\frac{4}{t} (1-\frac{1}{q})}}{2(1-\frac{1}{q})}} \int_{b=1-a} ^{\frac{1}{at}-\frac{a}{q}}1\,da\,db.$$

By evaluating these integrals and using an argument with Taylor series, one can show that the first term decays quadratically and second term is of order $t^{-3}$ so that the total decay in this case is quadratic. That is,

$$m_q\left(\{R>t\}\cap C_1\right) \sim t^{-2}.$$

Now we consider the second case:  $b+ \frac{a}{q}\ge1.$ Recall we denoted these points by $C_2$. In this case, our return map is 
$$R= \frac{ja^{-1}}{(a/q)-a+jb}$$
where $j=\left\lfloor \frac{1+a(1-\frac{1}{q})}{b}\right\rfloor$

If $q=1$, then our return map reduces to $R=\frac{1}{ab}$ and this intersects the triangle $\{(a,b):a\in(0,1],b\in(1-a,1]\}$ in the same region as in \cite{MR3214280}. Hence, the decay is quadratic.

If $q>1$, then the condition tail condition $R>t$ is equivalent to 
$$b<\frac{1}{at}+\frac{a}{j}\left(1-\frac{1}{q}\right).$$
This hyperbola is always bounded above by the hyperbola  $h(a):=\frac{1}{at}+a\left(1-\frac{1}{q}\right)$ since our floor function is bounded below by 1. Since ultimately we are only interested in the decay of
 $$m_q\left(\{R>t\}\cap C_2\right)  $$
 we will compute the order of decay of $m_q\left(\{b<h(a)\}\cap C_2\right)  .$ We will show this is quadratic decay in which case the quadratic decay of the tail follows since
$$m_q\left(\{R>t\}\cap C_2\right) \le m_q\left(\{b<h(a)\}\cap C_2\right) \sim t^{-2}.$$
Our interest in $h$ comes from the fact that their is no floor function to deal with. Then, it is easy to show that the hyperbola $h$ intersects $b=1$ when
$$a = \frac{1\pm\sqrt{1-\frac{4}{t} (1-\frac{1}{q})}}{2(1-\frac{1}{q})}.$$
As before, the root with a ``plus" falls outside of our region of interest for $t$ sufficiently large.  The hyperbola also intersects the boundary curve $b=1-\frac{a}{q}$ when 
$a = \frac{1\pm\sqrt{1-\frac{4}{t}}}{2}.$ Hence,

\begin{align*}
m_q\left(\{b<h(a)\}\cap C_2\right) &= \int_{a=0} ^{\frac{1-\sqrt{1-\frac{4}{t} (1-\frac{1}{q})}}{2(1-\frac{1}{q})}} \int_{b=1-(a/q)} ^1 1 \,db\,da \\
& +  \int_{a=\frac{1-\sqrt{1-\frac{4}{t} (1-\frac{1}{q})}}{2(1-\frac{1}{q})}} ^{\frac{1-\sqrt{1-\frac{4}{t}}}{2}} \int_{b=1-(a/q)} ^{h} 1 \,db\,da\\
&+ \int_{a=\frac{1+\sqrt{1-\frac{4}{t}}}{2}} ^1 \int_{b=1-(a/q)} ^{h} 1 \,db\,da .
\end{align*}

See Figure \ref{fig: ContributionTorsion} for an illustration of the contribution on $C_2.$
\begin{figure}[H]
	\centering
    \includegraphics[width=4in]{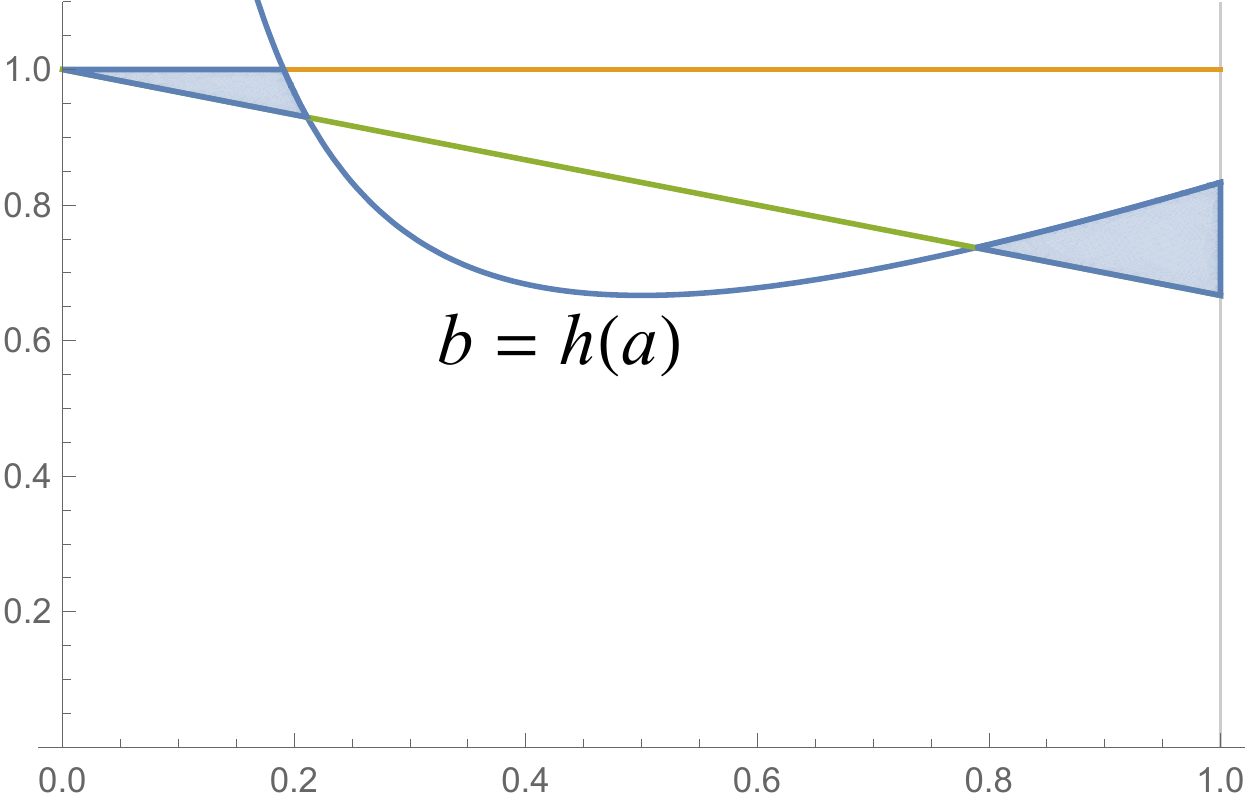}
    \caption{The contribution on $C_2$.} 
    \label{fig: ContributionTorsion}
\end{figure}

We leave it as an exercise to show that the first and last term decay quadratically while the second term decays like $t^{-3}$.

Now we find a lower bound for $C_2$ with respect to the torsion measure $m_q$.  Recall the following calculation from the upper bound on $C_2$,
$$R>t\iff b<\frac{1}{at}+\frac{a}{j}(1-q^{-1}).$$
Then, the set of $(a,b)$ in $C_2$ satisfying this last inequality contains those that have $b<\frac{1}{at}.$ Hence, we have the lower bound
$$m_q\left(\{R>t\}\right)\ge m_q\left(\{b<(at)^{-1}\}\right).$$
This measure is given by the integrals
$$\int_{a=0} ^{1/t}\int_{b=1-(a/q)}^1 1 \,db\,da + \int_{a=1/t} ^{\frac{q-\sqrt{q^2-(4q/t)}}{2}}\int_{b=1-(a/q)}^{1/at}1 \,db\,da.$$
A Taylor series approximation would show that the first integral decays quadratically while the second integral decays like $t^{-3}$.
Hence, the gaps with respect to the torsion measure $m_q$ are bounded above and below by a function with quadratic tail. That is,
$$m_q(C_2\cap\{R>t\})\asymp t^{-2}.$$

Combining this with the results on $C_1$ we have
$$m_q(\{R>t\})\asymp t^{-2}.$$
\end{proof}




\begin{thebibliography}{10}

\bibitem{MR2582821}
Alexander Arbieto, Carlos Matheus, and Carlos~G. Moreira.
\newblock {\em The remarkable effectiveness of ergodic theory in number
  theory}, volume~17 of {\em Ensaios Matem\'{a}ticos [Mathematical Surveys]}.
\newblock Sociedade Brasileira de Matem\'{a}tica, Rio de Janeiro, 2009.

\bibitem{MR3000496}
J.~S. Athreya and J.~Chaika.
\newblock The distribution of gaps for saddle connection directions.
\newblock {\em Geom. Funct. Anal.}, 22(6):1491--1516, 2012.

\bibitem{MR3497256}
Jayadev~S. Athreya.
\newblock Gap distributions and homogeneous dynamics.
\newblock In {\em Geometry, topology, and dynamics in negative curvature},
  volume 425 of {\em London Math. Soc. Lecture Note Ser.}, pages 1--31.
  Cambridge Univ. Press, Cambridge, 2016.

\bibitem{MR3330337}
Jayadev~S. Athreya, Jon Chaika, and Samuel Leli\`evre.
\newblock The gap distribution of slopes on the golden {L}.
\newblock In {\em Recent trends in ergodic theory and dynamical systems},
  volume 631 of {\em Contemp. Math.}, pages 47--62. Amer. Math. Soc.,
  Providence, RI, 2015.

\bibitem{MR3214280}
Jayadev~S. Athreya and Yitwah Cheung.
\newblock A {P}oincar\'e section for the horocycle flow on the space of
  lattices.
\newblock {\em Int. Math. Res. Not. IMRN}, (10):2643--2690, 2014.

\bibitem{MR2018932}
Yitwah Cheung.
\newblock Hausdorff dimension of the set of nonergodic directions.
\newblock {\em Ann. of Math. (2)}, 158(2):661--678, 2003.
\newblock With an appendix by M. Boshernitzan.

\bibitem{MR2772084}
Yitwah Cheung, Pascal Hubert, and Howard Masur.
\newblock Dichotomy for the {H}ausdorff dimension of the set of nonergodic
  directions.
\newblock {\em Invent. Math.}, 183(2):337--383, 2011.

\bibitem{MR629475}
S.~G. Dani.
\newblock Invariant measures and minimal sets of horospherical flows.
\newblock {\em Invent. Math.}, 64(2):357--385, 1981.

\bibitem{MR3959357}
Benjamin Dozier.
\newblock Equidistribution of saddle connections on translation surfaces.
\newblock {\em J. Mod. Dyn.}, 14:87--120, 2019.

\bibitem{MR2060024}
Noam~D. Elkies and Curtis~T. McMullen.
\newblock Gaps in {${\sqrt n}\bmod 1$} and ergodic theory.
\newblock {\em Duke Math. J.}, 123(1):95--139, 2004.

\bibitem{MR2648693}
Alex Eskin.
\newblock Unipotent flows and applications.
\newblock In {\em Homogeneous flows, moduli spaces and arithmetic}, volume~10
  of {\em Clay Math. Proc.}, pages 71--129. Amer. Math. Soc., Providence, RI,
  2010.

\bibitem{MR1827113}
Alex Eskin and Howard Masur.
\newblock Asymptotic formulas on flat surfaces.
\newblock {\em Ergodic Theory Dynam. Systems}, 21(2):443--478, 2001.

\bibitem{MR0393339}
Harry Furstenberg.
\newblock The unique ergodicity of the horocycle flow.
\newblock pages 95--115. Lecture Notes in Math., Vol. 318, 1973.

\bibitem{MR2186246}
Pascal Hubert and Thomas~A. Schmidt.
\newblock An introduction to {V}eech surfaces.
\newblock In {\em Handbook of dynamical systems. {V}ol. 1{B}}, pages 501--526.
  Elsevier B. V., Amsterdam, 2006.

\bibitem{MR2726104}
Jens Marklof and Andreas Str\"{o}mbergsson.
\newblock The distribution of free path lengths in the periodic {L}orentz gas
  and related lattice point problems.
\newblock {\em Ann. of Math. (2)}, 172(3):1949--2033, 2010.

\bibitem{MR850537}
Howard Masur.
\newblock Closed trajectories for quadratic differentials with an application
  to billiards.
\newblock {\em Duke Math. J.}, 53(2):307--314, 1986.

\bibitem{MR2186247}
Howard Masur.
\newblock Ergodic theory of translation surfaces.
\newblock In {\em Handbook of dynamical systems. {V}ol. 1{B}}, pages 527--547.
  Elsevier B. V., Amsterdam, 2006.

\bibitem{MR1928530}
Howard Masur and Serge Tabachnikov.
\newblock Rational billiards and flat structures.
\newblock In {\em Handbook of dynamical systems, {V}ol.\ 1{A}}, pages
  1015--1089. North-Holland, Amsterdam, 2002.

\bibitem{MR2299738}
Curtis~T. McMullen.
\newblock Dynamics of {${\rm SL}_2(\Bbb R)$} over moduli space in genus two.
\newblock {\em Ann. of Math. (2)}, 165(2):397--456, 2007.

\bibitem{MR1075042}
Marina Ratner.
\newblock On measure rigidity of unipotent subgroups of semisimple groups.
\newblock {\em Acta Math.}, 165(3-4):229--309, 1990.

\bibitem{MR1062971}
Marina Ratner.
\newblock Strict measure rigidity for unipotent subgroups of solvable groups.
\newblock {\em Invent. Math.}, 101(2):449--482, 1990.

\bibitem{MR1135878}
Marina Ratner.
\newblock On {R}aghunathan's measure conjecture.
\newblock {\em Ann. of Math. (2)}, 134(3):545--607, 1991.

\bibitem{MR2180242}
Martin Schmoll.
\newblock Spaces of elliptic differentials.
\newblock In {\em Algebraic and topological dynamics}, volume 385 of {\em
  Contemp. Math.}, pages 303--320. Amer. Math. Soc., Providence, RI, 2005.

\bibitem{MR3332893}
Andreas Str\"{o}mbergsson.
\newblock An effective {R}atner equidistribution result for {ASL}(2,{R}).
\newblock {\em Duke Math. J.}, 164(5):843--902, 2015.

\bibitem{dia}
Diaaeldin Taha.
\newblock On cross sections to the geodesic and horocycle flows on quotients of
  ${SL}(2,{R})$ by hecke triangle groups ${G}_q$.
\newblock {\em preprint}, 2020.

\bibitem{MR3567252}
Caglar Uyanik and Grace Work.
\newblock The distribution of gaps for saddle connections on the octagon.
\newblock {\em Int. Math. Res. Not. IMRN}, (18):5569--5602, 2016.

\bibitem{MR1436653}
Ya.~B. Vorobets.
\newblock Plane structures and billiards in rational polygons: the {V}eech
  alternative.
\newblock {\em Uspekhi Mat. Nauk}, 51(5(311)):3--42, 1996.

\bibitem{gh2}
Grace Work.
\newblock A transversal for horocycle flow on {H}$(\alpha )$.
\newblock {\em Geometriae Dedicata}, 07 2019.

\bibitem{MR2261104}
Anton Zorich.
\newblock Flat surfaces.
\newblock In {\em Frontiers in number theory, physics, and geometry. {I}},
  pages 437--583. Springer, Berlin, 2006.

\end{thebibliography}

 \end{document}